\newcommand{\Z}{{\mathbb{Z}}}
\newcommand{\Q}{{\mathbb{Q}}}
\newcommand{\R}{{\mathbb{R}}}
\newcommand{\C}{{\mathbb{C}}}
\newcommand{\HH}{{\mathbb{H}}}
\newcommand{\Mform}{{\mathcal{M}}}
\newcommand{\Scusp}{{\mathcal{S}}}
\newcommand{\Lform}{{\mathcal{L}}}
\newcommand{\LEigenform}{{\tilde{\mathcal{L}}}}
\newcommand{\U}{{\mathcal{U}}}
\newcommand{\mR}{{\mathcal{R}}}
\DeclareMathOperator{\re}{Re}
\DeclareMathOperator{\im}{Im}
\DeclareMathOperator{\rank}{rank}
\DeclareMathOperator{\spann}{span}
\newcommand{\ep}{\varepsilon}
\newcommand{\defeq}{\vcentcolon=}
\def\SL{{\rm SL}}
\def\GL{{\rm GL}}
\newcommand{\floor}[1]{\left\lfloor #1 \right\rfloor}
\newcommand{\ceil}[1]{\left\lceil #1 \right\rceil}
\renewcommand{\(}{\left(}
\renewcommand{\)}{\right)}
\newcommand{\la}{\left|}
\newcommand{\ra}{\right|}
\newcommand{\Ea}{E_{\mathfrak{a}}}
\newcommand{\Eb}{E_{\mathfrak{b}}}
\newcommand{\ma}{\mathfrak{a}}
\newcommand{\Mod}[1]{\ (\mathrm{mod}\ #1)}
\DeclareMathOperator{\ch}{ch}
\DeclareMathOperator{\sh}{sh}
\DeclareMathOperator{\sgn}{sgn}
\newtheorem{theorem}{Theorem}
\newtheorem{lemma}[theorem]{Lemma}
\newtheorem{corollary}[theorem]{Corollary}
\newtheorem{proposition}[theorem]{Proposition}
\newtheorem{definition}[theorem]{Definition}
\newtheorem{setting}[theorem]{Setting}
\theoremstyle{remark}
\newtheorem*{remark}{Remark}
\numberwithin{equation}{section}
\numberwithin{theorem}{section}
\numberwithin{lemma}{section}
\numberwithin{proposition}{section} 
\numberwithin{example}{section}
\numberwithin{definition}{section}
\numberwithin{corollary}{section}
\numberwithin{setting}{section}
\author{Qihang Sun}
\address{Department of Mathematics, University of Illinois, Urbana, IL 61801}
\email{qihangs2@illinois.edu}
\title[Uniform bounds for Kloosterman sums]{Uniform bounds for Kloosterman sums of half-integral weight with applications}
\date{\today}
\begin{document}

\begin{abstract}
Sums of Kloosterman sums have deep connections with the theory of modular forms, and their estimation has many important consequences. Kuznetsov used his famous trace formula and got a power-saving estimate with respect to $x$ with implied constants depending on $m$ and $n$. Recently, in 2009, Sarnak and Tsimerman obtained a bound uniformly in $x$, $m$ and $n$. 

The generalized Kloosterman sums are defined with multiplier systems and on congruence subgroups. Goldfeld and Sarnak bounded sums of them with main terms corresponding to exceptional eigenvalues of the hyperbolic Laplacian. Their error term is a power of $x$ with implied constants depending on all the other factors. In this paper, for a wide class of half-integral weight multiplier systems, we get the same bound with the error term uniformly in $x$, $m$ and $n$. 

Such uniform bounds have great applications. For the eta-multiplier, Ahlgren and Andersen obtained a uniform and power-saving bound with respect to $m$ and $n$, which resulted in a convergent error estimate on the Rademacher exact formula of the partition function $p(n)$.  We also establish a Rademacher-type exact formula for the difference of partitions of rank modulo $3$, which allows us to apply our power-saving estimate to the tail of the formula for a convergent error bound.

\end{abstract}
\maketitle

\section{Introduction: sum of Kloosterman sums} 
For a positive integer $c$, the standard Kloosterman sum
\[S(m,n,c)\defeq\sum_{d\Mod c^*}e\(\frac{m\overline{d}+nd}c\),\quad e(z)\defeq e^{2\pi i z},\quad \overline{d} d\equiv 1 \Mod c\]
has a trivial bound $c$ and a well-known Weil bound
\begin{equation}\label{Weilbound0.5}
	|S(m,n,c)|\leq \sigma_0(c)(m,n,c)^{\frac12}c^{\frac12}, \quad \text{where\ } \sigma_k(\ell)=\sum\nolimits_{d|\ell}d^k
\end{equation}
is the divisor function. The Weil bound implies a square-root cancellation for estimating
\begin{equation}\label{StandardWeilBound}
	\sum_{c\leq x}\frac{S(m,n,c)}c\ll \sigma_0((m,n)) x^{\frac12}\log x. 
\end{equation}
In the 1960s, Linnik \cite{Linnikconj} and Selberg \cite{selberg} pointed out the connection between such sums and modular forms. They conjectured that there should be a full cancellation, which was reformulated by Sarnak and Tsimerman in \cite{SarnakTsimerman09} as
\[\sum_{c\leq x}\frac{S(m,n,c)}c\ll_\ep | {m} {n}x|^{\ep}. \]
Kuznetsov \cite{Kuznetsov1980trFml} applied his famous trace formula which resulted in the bound
\[\sum_{c\leq x}\frac{S(m,n,c)}c\ll_{m,n} x^{\frac16}(\log x)^{\frac13}. \]
Sarnak and Tsimerman \cite{SarnakTsimerman09} obtained a bound which is uniform in $m$ and $n$ for $ {m}  {n}>0$:
\begin{equation}\label{STbound}
	\sum_{c\leq x}\frac{S(m,n,c)}c\ll_\ep \(x^{\frac16}+( {m} {n})^{\frac16}+(m+n)^{\frac18}( {m}  {n})^{\frac\theta2}\) ( {m}  {n}x)^\ep, 
\end{equation}
where $\theta$ is an admissible exponent towards the Ramanujan-Petersson conjecture for $\GL_2/\Q$. One may take $\theta=\frac7{64}$ by the work of Kim and Sarnak \cite{KimSarnak764}. 

In this paper we focus on the generalized Kloosterman sums defined by 
\[S(m,n,c,\nu)\defeq\sum_{\substack{0\leq a,d<c \\ \gamma=\begin{psmallmatrix} a&b\\ c&d  		\end{psmallmatrix} 		\in \Gamma}} \overline{\nu}(\gamma)e \(\frac{\tilde ma+\tilde nd}c\)\]
where $\Gamma$ is a congruence subgroup of $\SL_2(\Z)$ with $\begin{psmallmatrix}
	1&1\\0&1
\end{psmallmatrix}\in \Gamma$, $\nu$ is a weight $k$ multiplier system on $\Gamma$, and $\tilde{n}\defeq n-\alpha_{\nu}$ for $\alpha_{\nu}\in [0,1)$ defined by $e(-\alpha_{\nu})=\nu(\begin{psmallmatrix}
	1&1\\0&1
\end{psmallmatrix})$. These Kloosterman sums have been studied by Goldfeld and Sarnak \cite{gs} and Pribitkin \cite{pribitkin}. Sums of generalized Kloosterman sums are related to many problems in number theory. For example, if we denote $p(n)$ as the partition function, which is the number of ways to write the natural number $n$ as a sum of positive integers (e.g. $p(4)=5$), then we have Rademacher's exact formula \cite[(1.8)]{Rademacher1937pn} \cite[(1.2), (1.3)]{AAimrn}
\begin{align}\label{RademacherExactFormula}
	\begin{split}
	p(n)&=\frac1{\pi\sqrt2}\sum_{c=1}^\infty A_c(n)\sqrt c\,\frac{d}{dn}\Bigg(\frac{\sinh\(\frac\pi c\sqrt{\frac23(n-\frac1{24})}\)}{\sqrt{n-\frac 1{24}}}\Bigg)\\
	&=\frac{2\pi}{(24n-1)^{\frac 34}}\sum_{c=1}^\infty \frac{A_c(n)}c  I_{\frac32}\(\frac{4\pi\sqrt{24n-1}}{24c}\) \\
	&=\frac{2\pi e(-\frac18)}{(24n-1)^{\frac 34}}\sum_{c=1}^\infty \frac{S(1,1-n,c,\nu_\eta)}c  I_{\frac32}\(\frac{4\pi\sqrt{24n-1}}{24c}\) 
\end{split}
\end{align}
where $A_c(n)$ is in \eqref{AknAsKloostermanSum}, $\nu_\eta$ is the multiplier system of weight $\frac12$ for Dedekind's eta-function \eqref{etaMultiplier} and $I_\kappa$ is the $I$-Bessel function. Since we know Bessel functions well, bounds for the sum of Kloosterman sums
\begin{equation}\label{sumofKlstmSums}
	\sum_{c\leq x}\frac{S(m,n,c,\nu_\eta)}c
\end{equation}
will result in estimates of the tail of \eqref{RademacherExactFormula}
\begin{equation}\label{errorp}
	R_1(n,x)\defeq\frac{2\pi e(-\frac18)}{(24n-1)^{\frac 34}}\sum_{c>x}\frac{S(1,1-n,c,\nu_\eta)}c  I_{\frac32}\(\frac{4\pi\sqrt{24n-1}}{24c}\). 
\end{equation}
A bound for $R_1(n,x)$ will tell us how many terms are needed in order to approximate the integer $p(n)$ with a given accuracy. 

An important step on the asymptotics of \eqref{sumofKlstmSums} was obtained by Goldfeld and Sarnak \cite{gs}. Define 
\[\beta=\limsup_{c\rightarrow\infty} \frac{\log|S(m,n,c,\nu)|}{\log c}\text{\ \ \  and \ \ }Z(m,n,s,\nu)=\sum_{c=1}^{\infty}\frac{S(m,n,c,\nu)}{c^{2s}} \]
as Selberg's Kloosterman zeta function. Then they showed that
\begin{equation}\label{GoldfSbound}
	\sum_{c\leq x}\frac{S(m,n,c,\nu)}c=\sum_{\frac12<s_j<1}\tau_j(m,n)\frac{x^{2s_j-1}}{2s_j-1}+O\(x^{\frac \beta 3+\ep}\)
\end{equation}
where the sum runs over simple poles of $Z(m,n,\cdot,\nu)$ in $(\frac12,1)$. Here $\tau_j$ and the implied constant depend on $m,n,k,\nu$ and $\Gamma$.

We obtain a result analogous to \eqref{GoldfSbound}. Our result is uniform in $m$ and $n$ and applies to a general class of multipliers, which includes both the theta- and eta-multipliers as well as their twists by quadratic characters. 
This result has a nice application to exact formulas for arithmetic functions related to integer partitions, which we will show in the next section. 

To state the results, we first need to classify our admissible multipliers:  
\begin{definition}\label{Admissibility}
Let $(k,\nu')=(\frac12,(\frac{|D|}\cdot)\nu_\theta)$ or $(-\frac12,(\frac{|D|}\cdot)\overline{\nu_\theta})$ where $D$ is some even fundamental discriminant and $\nu_\theta$ is the multiplier for the theta function. 
We say a weight $k$ multiplier $\nu$ on $\Gamma=\Gamma_0(N)$ is \underline{admissible} if it satisfies the following two conditions:
\begin{enumerate}\label{MultplCond}
	\item[(1)] Level lifting: there exist positive integers $B$ and $M$ such that the map $z\rightarrow Bz$ gives an injection from weight $k$ automorphic eigenforms of the Laplacian on $(\Gamma_0(N),\nu)$ to those on $(\Gamma_0(M),\nu')$ and keeps the eigenvalue. 
	Here $M$ is a multiple of $4$ and $M$ depends on $B$.
	\item[(2)] Average Weil bound: for $x>y>0$ and $x-y\gg x^{\frac 23}$, we have
	\[\sum_{N|c\,\in[y,x]}\frac{|S(m,n,c,\nu)|}{c}\ll_{\nu,\ep} (\sqrt x-\sqrt y)| \tilde{m}  \tilde{n} x|^\ep \]
\end{enumerate}	
\end{definition}

\begin{remark}
The exponent $\frac23=1-\delta$ comes from a special parameter $\delta$ among the proof which is finally chosen to be $\frac13$. An individual Weil-type bound on $S(m,n,c,\nu)$ may imply the average bound specified in condition (2), but our result only needs this weaker requirement. 
\end{remark}

Let $\rho_j(n)$ denote the $n$-th Fourier coefficients of an orthonormal basis $\{v_j(\cdot)\}_j$ of $\LEigenform_{k}(N,\nu)$ (the space of square-integrable eigenforms of the weight $k$ Laplacian $\Delta_k$ with respect to $(\Gamma_0(N),\nu)$, see Section~3 for details). Here are our theorems: 

\begin{theorem}\label{mainThm}
	Suppose $\nu$ is an \textit{admissible} weight $k=\pm\frac12$ multiplier on $\Gamma_0(N)$, $  \tilde{m}>0$ and $  \tilde{n} <0$. 
	Then when $B\tilde{\ell}$ is square-free or coprime to $M$ for $\ell\in\{m,n\}$, we have
	\begin{equation}
		\sum_{N|c\leq X} \frac{S(m,n,c,\nu)}{c}=\!\!\!\sum_{r_j\in i(0,\frac14]}\!\!\! \tau_j(m,n)\frac{X^{2s_j-1}}{2s_j-1}
		+O_{\nu,\ep}\(\(|\tilde{m}  \tilde{n}|^{\frac{143}{588}}+X^{\frac16}\) |  \tilde{m}  \tilde{n}X|^\ep\),
	\end{equation}
	where the sum runs over exceptional eigenvalues $\lambda_j<\frac14$ of the hyperbolic Laplacian $\Delta_k$: $\lambda_j=\frac14+r_j^2$, $s_j=\frac12+\im r_j$, and
	\[{\tau_j(m,n)}=2^{3-2s_j}\, i^k\, \pi^{1-2s_j} \,\Gamma(2s_j-1) \overline{\rho_j(m)}\rho_j(n) | \tilde{m}  \tilde{n}|^{1-s_j}\]
	are the coefficients as in \cite{gs} (as corrected by \cite[Proposition~7]{AAAlgbraic16}). 
\end{theorem}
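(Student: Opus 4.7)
The plan is to combine the Goldfeld-Sarnak contour strategy with the Sarnak-Tsimerman approach to uniformity, adapted to half-integral weight through the Proskurin-Kuznetsov trace formula. First I would replace the sharp cutoff $\mathbf{1}_{c\leq X}$ by a smooth test function $\phi$ that equals $1$ on $[2, X-Y]$ and vanishes outside $[1, X+Y]$, with $Y = X^{1-\delta}$ for a parameter $\delta$ to be optimized (ultimately $\delta = \tfrac13$, which is exactly why condition (2) of Definition~\ref{Admissibility} requires $x-y \gg x^{2/3}$). Using the level-lifting hypothesis (1) via $z \mapsto Bz$, I reduce from $(\Gamma_0(N), \nu)$ to $(\Gamma_0(M), \nu')$, replacing the indices $(m,n)$ by $(Bm,Bn)$. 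The lifted setting is precisely where a half-integral weight Kuznetsov trace formula for the theta multiplier (twisted by $\bigl(\tfrac{|D|}{\cdot}\bigr)$) is available in the literature.

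Applying the trace formula to the smoothed sum produces a spectral decomposition indexed by an orthonormal basis $\{v_j\}$ of $\LEigenform_k(M,\nu')$: each $v_j$ with spectral parameter $r_j$ contributes $\overline{\rho_j(Bm)}\,\rho_j(Bn)$ paired against the Bessel transform $\widehat{\phi}(r_j)$, plus a continuous contribution from Eisenstein series at each cusp. In our regime $\tilde m > 0 > \tilde n$ the relevant Bessel kernel is of $K$-type (rather than $J$-type as in \cite{SarnakTsimerman09}). A direct evaluation of $\widehat\phi$ at exceptional parameters $r_j \in i(0,\tfrac14]$ reproduces the main terms $\tau_j(m,n) X^{2s_j-1}/(2s_j-1)$. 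For tempered $r_j \in \R$ and for the continuous parameter, $\widehat\phi$ decays rapidly, and summing against a spectral large sieve inequality for weight $k$ on $\Gamma_0(M)$ controls the tempered part uniformly in $X$.

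The uniform dependence on $\tilde m \tilde n$ with exponent $\tfrac{143}{588}$ comes from handling the cuspidal spectrum at small but positive $|r_j|$. Here the large sieve alone is insufficient; one must use an individual bound on $|\rho_j(B\tilde\ell)|$. The hypothesis that $B\tilde\ell$ is square-free or coprime to $M$ lets one apply a Waldspurger-type identity to express $|\rho_j(B\tilde\ell)|^2$ through a central $L$-value of the Shimura lift of $v_j$, so that the Kim-Sarnak bound $\theta = \tfrac7{64}$ applies. Balancing this pointwise estimate against the large-sieve average across a dyadic spectral window optimizes to $\tfrac{143}{588}$. Finally, de-smoothing uses condition (2) to estimate the transition region:
\[
\sum_{\substack{X - Y \leq c \leq X + Y \\ N \mid c}} \frac{|S(m,n,c,\nu)|}{c} \ll (\sqrt{X+Y} - \sqrt{X-Y}) |\tilde m \tilde n X|^\ep \asymp X^{\frac16} |\tilde m \tilde n X|^\ep
\]
at $\delta = \tfrac13$, producing the $X^{1/6}$ term in the error.

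I expect the hardest step to be isolating the correct $\tilde m \tilde n$ exponent on the spectral side: it requires a clean dyadic split of the cuspidal spectrum into a near-exceptional tail (where Kim-Sarnak applied via Waldspurger dominates) and a tempered bulk (where the large sieve gives square-root cancellation), followed by a careful optimization over the split parameter. A secondary subtlety, absent in the $\tilde m \tilde n > 0$ regime of \cite{SarnakTsimerman09}, is that the Bessel transforms here are of $K$-type; their behaviour at exceptional and near-exceptional spectral parameters must be pinned down precisely in order to match the main-term coefficients with $\tau_j(m,n)$ as given in \cite{gs} and corrected in \cite{AAAlgbraic16}.
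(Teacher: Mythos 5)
Your high-level architecture is right: smooth the sharp cutoff, apply a mixed-sign Kuznetsov trace formula, split the spectral side into exceptional, small tempered, and large tempered parameters, and de-smooth with the average Weil bound to get the $X^{1/6}$ term. However, there is a genuine gap in the key technical ingredient.

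You claim that the hypothesis ``$B\tilde\ell$ square-free or coprime to $M$'' is used to invoke a Waldspurger-type identity that relates $|\rho_j(B\tilde\ell)|^2$ to a central $L$-value, so that Kim--Sarnak ($\theta=\tfrac7{64}$) yields the exponent $\tfrac{131}{294}$. This is not the mechanism in the paper, and I do not see how it would produce $\tfrac{131}{294}$. In the paper, the square-free hypothesis enters through Proposition~\ref{AhgDunEsstBd}, whose proof uses Iwaniec's averaging method: one sums a Maass--Selberg-type identity over a family $\mathcal{Q}$ of levels $pM$ with $p$ ranging over primes near $n^{1/7}$, controls the geometric side via Weil-type Kloosterman sum bounds (Waibel's bound $\text{\cite[(19)]{Waibel2017FourierCO}}$, which extends the square-free case of Ahlgren--Dunn's Proposition~5.2 to general $n$ at the cost of the factor $u_n$), and drops positive spectral terms on the other side. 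The exponent $\tfrac{131}{294}$ is the output of balancing the Weil saving against the averaging parameter $P$; it has nothing to do with subconvexity or Waldspurger. (Also, Kim--Sarnak is a bound toward Ramanujan, not a subconvexity bound, so it would not by itself control a central $L$-value even if you did go the Waldspurger route.) Where Kim--Sarnak actually appears in the proof of Theorem~\ref{mainThm} is much more modest: via Proposition~\ref{specParaBoundWeightHalf} it gives $2\im r_\Delta \le \theta < \tfrac13$ for exceptional $r_j \neq \tfrac i4$, which is needed so that the error in evaluating the main-term contributions of $\check\phi$ at exceptional parameters (Lemma~\ref{mainphiLemma}) is absorbed when one chooses $\delta = \tfrac13 > \theta$.

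Two secondary issues. First, you propose to apply the trace formula at level $M$ by ``replacing the indices $(m,n)$ by $(Bm,Bn)$.'' The paper applies the trace formula (Theorem~\ref{Mixed-sign case trace formula}) directly at level $N$ with multiplier $\nu$; the level lifting $z\mapsto Bz$ appears only in the proofs of the two auxiliary propositions, and the transfer of Fourier-coefficient bounds between levels requires a careful argument (Lemma~\ref{Level Shifting lemma with ch 2 pi t + ch 2 pi r }) because, in the presence of singular cusps, one cannot simply replace one orthonormal basis of Eisenstein series by another. Second, and related, you mention the Eisenstein contribution but do not say how it is bounded. This is not a routine point here: in the mixed-sign regime one cannot drop the continuous spectrum by positivity, so both Proposition~\ref{AdsDukeEsstBd} and Proposition~\ref{AhgDunEsstBd} must be established in a form that includes the Eisenstein coefficients $\rho_{\mathfrak{a}}(n,r)$, and this is one of the main technical additions of the paper over its predecessors. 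Without these two inputs your estimate of the spectral side is incomplete.
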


\begin{remark}

	This theorem, along with the subsequent theorems presented in this paper  (Theorem~\ref{mainThm2}, Theorem~\ref{mainThm3} and Theorem~\ref{mainThmLastSec}), is limited to cases where $\tilde m\tilde n<0$ in the half-integral weight. Although the statement of Theorem~\ref{mainThm} is for $\tilde m>0$ and $\tilde n<0$, when $\tilde m<0$ and $\tilde n>0$, the same result can be directly verified via \eqref{KlstmSumConj}.

	For the contrast case $\tilde m\tilde n>0$, the author has submitted another paper \cite{QihangSecondAsympt} in which we prove a similar uniform bound.  

\end{remark}

We can estimate the contribution from exceptional eigenvalues to obtain 
\begin{theorem}\label{mainThm2}
	With the same setting as Theorem~\ref{mainThm}, we have
	\begin{equation}
		\sum_{N|c\leq X}\!\!\! \frac{S(m,n,c,\nu)}{c}\ll_{\nu,\ep}\(|\tilde{m}  \tilde{n}|^{\frac{131}{588}-\frac\theta2}X^{\theta}+  |  \tilde{m}  \tilde{n} |^{\frac{143}{588}} +X^{\frac16}\) |  \tilde{m}  \tilde{n}X|^\ep. 
	\end{equation}

\end{theorem}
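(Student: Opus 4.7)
The plan is to derive this as a direct corollary of Theorem~\ref{mainThm} by explicitly bounding the contribution of the exceptional eigenvalues appearing in its main term, and then simply adding the resulting estimate to the error term $(|\tilde m \tilde n|^{\frac{143}{588}} + X^{\frac16})\,|\tilde m \tilde n X|^\ep$ already in place.

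For each exceptional $r_j \in i(0,\tfrac14]$ with $s_j = \tfrac12 + |r_j|$, admissibility condition~(1) allows the weight-$\pm\tfrac12$ Maass form $v_j$ on $(\Gamma_0(N),\nu)$ to be lifted with preserved eigenvalue to $(\Gamma_0(M),\nu')$, and the Katok-Sarnak/Shimura correspondence then associates a weight-zero Maass form whose spectral parameter is $2r_j$. Applying the Kim-Sarnak bound $\theta = \tfrac{7}{64}$ at this level yields the sharp restriction $|r_j| \leq \tfrac{\theta}{2}$, and in particular $X^{2s_j - 1} \leq X^{\theta}$. To control $|\tau_j(m,n)|$, I would factor out everything that does not depend on the Fourier coefficients of $v_j$ and then use the Waldspurger-Kohnen-Zagier formula to express $|\rho_j(\ell)|^2$, up to harmless arithmetic factors, as $L(\tfrac12,F_j \otimes \chi_{\tilde\ell})/\sqrt{|\tilde\ell|}$, where $F_j$ is the Shimura lift of $v_j$. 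Inserting the best current subconvex bound for this twisted central $L$-value, together with $|\tilde m \tilde n|^{1 - s_j} \leq |\tilde m \tilde n|^{\frac12 - \frac{\theta}{2}}$ valid on the exceptional range, gives an individual estimate of the form $|\tau_j(m,n)| \ll_{\nu,\ep} |\tilde m \tilde n|^{\frac{131}{588} - \frac{\theta}{2} + \ep}$.

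Since the number of exceptional eigenvalues is finite (bounded in terms of $N$ and $\nu$ alone) and each exceptional $s_j$ is bounded away from $\tfrac12$ by the discreteness of the spectrum, the prefactor $(2s_j - 1)^{-1}$ is absorbed into the implicit constant; summing the individual estimates multiplied by $X^{2s_j - 1} \leq X^{\theta}$ then yields the exceptional contribution $\ll_{\nu,\ep} |\tilde m \tilde n|^{\frac{131}{588} - \frac{\theta}{2}} X^{\theta} \,|\tilde m \tilde n X|^\ep$. Combining with the error from Theorem~\ref{mainThm} produces the claimed bound. The main obstacle will be recovering precisely the exponent $\tfrac{131}{588}$: this is a delicate balance between the Waldspurger normalization and the specific subconvex exponent, and one must carefully verify that the level-lifting step $z \mapsto Bz$ in admissibility condition~(1) leaves the Shimura correspondent in a form for which the chosen subconvexity bound actually applies, rather than in a twisted level or character configuration that would degrade the exponent.
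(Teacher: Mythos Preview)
Your overall plan---bound the $\tau_j$ terms and add them to the error from Theorem~\ref{mainThm}---is correct and is essentially what the paper does (the paper phrases it as going back into the trace-formula argument and absorbing the exceptional contribution there, but the effect is identical). However, two aspects of your proposal diverge from the paper in ways worth noting.

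First, your route to bounding $|\rho_j(m)\rho_j(n)|$ is an unnecessary detour. You propose Waldspurger--Kohnen--Zagier plus subconvexity; the paper instead applies Cauchy--Schwarz and Proposition~\ref{AhgDunEsstBd}, which is \emph{already proved} and gives exactly $\sqrt{\tilde m|\tilde n|}\,|\rho_j(m)\rho_j(n)|\ll_{\nu,\ep}|\tilde m\tilde n|^{131/588+\ep}$ for each exceptional $r_j$. The exponent $\tfrac{131}{588}$ arises from Waibel's amplified Kloosterman-sum estimate inside that proposition, not from any particular $L$-function subconvex exponent, so trying to recover it via Waldspurger would require matching that amplification balance rather than quoting an off-the-shelf subconvexity result. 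Your worry in the last paragraph about the level-lifting step interfering with the Shimura lift simply does not arise in the paper's argument.

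Second, there are two slips. The inequality $|\tilde m\tilde n|^{1-s_j}\le|\tilde m\tilde n|^{1/2-\theta/2}$ goes the wrong way: since $s_j\le\tfrac12+\tfrac\theta2$, one has $1-s_j\ge\tfrac12-\tfrac\theta2$. What actually works is to combine $|\tilde m\tilde n|^{1-s_j}$ with $X^{2s_j-1}$ and observe that $|\tilde m\tilde n|^{1/2-t}X^{2t}$ for $t=s_j-\tfrac12\in(0,\tfrac\theta2]$ is bounded by $|\tilde m\tilde n|^{1/2-\theta/2}X^\theta+|\tilde m\tilde n|^{1/2}$, after which the second piece is absorbed into the $|\tilde m\tilde n|^{143/588}$ term. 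More seriously, your claim that Kim--Sarnak forces $|r_j|\le\tfrac\theta2$ for \emph{every} exceptional $r_j$ is false: the bottom eigenvalue $\lambda_0=\tfrac3{16}$, i.e.\ $r_j=\tfrac i4$, comes from holomorphic forms and is not excluded by any Selberg-type bound (see Proposition~\ref{specParaBoundWeightHalf}, which explicitly allows $t=\tfrac14$). At $r_j=\tfrac i4$ one has $2s_j-1=\tfrac12$, so if $\tau_j\neq0$ you would get an $X^{1/2}$ term. The paper handles this by observing (via \eqref{CuspFormR0}) that for $k=\tfrac12$ and $\tilde n<0$ one has $\rho_j(n)=0$, and for $k=-\tfrac12$ and $\tilde m>0$ one has $\rho_j(m)=0$, so $\tau_j(m,n)=0$ at $r_j=\tfrac i4$. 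You need to include this step.
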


\begin{remark}
	We can take $\theta=\frac7{64}$ as the Kim-Sarnak bound, then the term involving $\theta$ dominates when 
\[|  \tilde{m}  \tilde{n}|^{\frac{471}{686}}<X<|  \tilde{m}  \tilde{n}|^{2-\frac 3{1078}}. \]
When $\theta\leq \frac1{47}$, the $\theta$ term never dominates.
\end{remark}

The requirement that $B\tilde{m}$ and $B\tilde{n}$ are square-free or coprime to $M$ can be removed but this results in a more complicated bound. For positive integers $\alpha$ and $\beta$, we write $\alpha|\beta^\infty$ if there exists some positive integer $K$ such that $\alpha|\beta^K$. 
\begin{theorem}\label{mainThm3}
	With the same setting as Theorem~\ref{mainThm} (except the requirement for $B\tilde{m}$ and 
	$B\tilde{n}$), factor $|B \tilde{\ell}|=t_\ell u_\ell^2 w_\ell^2$ where $t_\ell$ is square-free, $u_\ell|M^\infty$ and $(w_\ell,M)=1$ for $\ell\in\{m,n\}$. Then
	\begin{equation}
		\begin{split}
		\sum_{N|c\leq X} \frac{S(m,n,c,\nu)}{c}-\!\!\!\sum_{r_j\in i(0,\frac14]}\!\!\! \tau_j(m,n)\frac{X^{2s_j-1}}{2s_j-1}
		\ll_{\nu,\ep}\left(A_u(m,n)+X^{\frac16}\right)|  \tilde{m}  \tilde{n}X|^\ep
		\end{split}
	\end{equation}
	where
	\begin{align*}A_u(m,n)&\defeq \(\tilde{m}^{\frac{131}{294}}+u_m\)^{\frac18} \(|\tilde{n}|^{\frac{131}{294}}+u_n\)^{\frac18} |\tilde{m}\tilde{n}|^{\frac3{16}}\\
		&\ll| \tilde{m}  \tilde{n}|^{\frac{143}{588}}+\tilde{m}^{\frac{143}{588}}|\tilde{n}|^{\frac 3{16}}\,u_n^{\frac18}+\tilde{m}^{\frac 3{16}}|\tilde{n}|^{\frac{143}{588}}\,u_m^{\frac18}+| \tilde{m}  \tilde{n}|^{\frac3{16}}(u_mu_n)^{\frac18}.
	\end{align*}
\end{theorem}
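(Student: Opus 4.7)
The plan is to reduce Theorem~\ref{mainThm3} to Theorem~\ref{mainThm} by using half-integral weight Hecke operators at primes coprime to $M$ to strip off the unramified square factor $w_\ell^2$ of $B\tilde{\ell}$, leaving only the squarefree core $t_\ell$ and the $M$-ramified square $u_\ell^2$ in the relevant Fourier coefficients.

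First, I would transport the Kloosterman sum identity to the Shimura lift on $(\Gamma_0(M),\nu')$ via admissibility condition (1), and expand the cutoff sum $\sum \phi(c) S(m,n,c,\nu)/c$ spectrally through the Kuznetsov trace formula. The spectral side has the form
\[
\sum_j \overline{\rho_j(B\tilde m)}\,\rho_j(B\tilde n)\,\mathcal K(r_j)
\]
(up to constants), where $\mathcal K$ is the Kuznetsov transform of the cutoff. I would choose the orthonormal basis $\{v_j\}$ of $\LEigenform_k(M,\nu')$ to be a simultaneous Hecke eigenbasis for all $T_{p^2}$ with $p\nmid M$; this is possible because these operators commute with each other and with $\Delta_k$ and preserve the $L^2$-structure.

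Second, I would apply the Shimura--Hecke relation at $w_\ell$,
\[
\rho_j(t_\ell u_\ell^2 w_\ell^2) = \lambda_j(w_\ell)\,\rho_j(t_\ell u_\ell^2) - \chi'(w_\ell)\,w_\ell^{2k-2}\,\rho_j(t_\ell u_\ell^2/w_\ell^2),
\]
in which the second term vanishes since $(w_\ell,t_\ell u_\ell^2)=1$. Using the Kim--Sarnak bound $|\lambda_j(w_\ell)|\ll w_\ell^{\theta+\epsilon}$ and absorbing $(w_m w_n)^{\theta+\epsilon}$ into $|\tilde m\tilde n|^\epsilon$, this reduces the spectral sum to one whose Fourier coefficients are indexed by $(t_m u_m^2, t_n u_n^2)$ in place of $(B\tilde m, B\tilde n)$.

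Third, I would bound $|\rho_j(t_\ell u_\ell^2)|$ by a hybrid estimate interpolating between the squarefree-index bound that drives the $|\tilde m\tilde n|^{143/588}$ term in Theorem~\ref{mainThm} (applied to the core $t_\ell$) and a trivial local bound at primes $p\mid M$ contributing the $u_\ell$ factor. Feeding this hybrid bound into the large sieve step behind the proof of Theorem~\ref{mainThm} produces $A_u(m,n)$ with precisely the stated exponents, since the two pieces enter through the $1/8$ power and combine with the $|\tilde m\tilde n|^{3/16}$ spectral weight as in the expansion of $A_u$. The exceptional eigenvalue main term and the $X^{1/6}$ error are inherited unchanged from Theorem~\ref{mainThm}, as they depend only on the Laplacian eigenvalues and the smooth cutoff, not on the arithmetic of $B\tilde\ell$.

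The main obstacle will be step three: the half-integral weight Hecke operators at primes $p\mid M$ do not diagonalize cleanly on $\LEigenform_k(M,\nu')$, so the local $u_\ell$ factor must come from an Atkin--Lehner-type decomposition combined with the explicit description of $\nu'$ provided by admissibility. A subsidiary concern is verifying that simultaneously diagonalizing $\{T_{p^2}:p\nmid M\}$ preserves the orthonormal normalization, so that the spectral large sieve step transfers from Theorem~\ref{mainThm} without loss.
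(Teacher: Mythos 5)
Your plan to strip off the $w_\ell^2$ factor using Hecke operators $T_{p^2}$ at primes $p\nmid M$ and the Kim--Sarnak bound on Hecke eigenvalues has a fatal flaw in the second step. You propose "absorbing $(w_m w_n)^{\theta+\epsilon}$ into $|\tilde m\tilde n|^\epsilon$," but Kim--Sarnak gives $\theta = 7/64 > 0$, so $(w_m w_n)^{\theta}$ is a genuine power, not an $\epsilon$-factor. Since $w_\ell$ can be as large as $|\tilde\ell|^{1/2}$, this contributes a factor up to $|\tilde m\tilde n|^{7/128}$ to the error term. The theorem's bound $A_u(m,n)$ contains no dependence on $w_\ell$ beyond its contribution to $|\tilde m\tilde n|$, so your route produces a strictly weaker statement than what is claimed. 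Unless you could invoke the Ramanujan--Petersson conjecture (i.e.\ $\theta=0$), the Hecke-reduction approach cannot recover the stated bound.

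The paper proceeds entirely differently: it never uses Hecke theory to reduce to the squarefree case. Instead, it generalizes Proposition~\ref{AhgDunEsstBd} (the analogue of \cite[Theorem~4.3]{ahlgrendunn}) to arbitrary $n$ by replacing the key input \cite[Proposition~5.2]{ahlgrendunn}---a Kloosterman sum estimate valid only for $n$ squarefree---with Waibel's bound \cite[(19)]{Waibel2017FourierCO}, which handles the general factorization $n = t u^2 w^2$ directly. The $u_\ell$ factor enters the final bound precisely through this replacement, propagating through the Ahlgren--Dunn averaging scheme (summing over primes $p \in \mathcal{Q}$, $P < p \le 2P$, $P = n^{1/7}$). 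The rest of the proof (smoothing via the test function $\phi$, the Kuznetsov trace formula with the continuous spectrum retained through Proposition~\ref{EstmtEisensteinMincluded}, dyadic decomposition and recovery of the exceptional terms via Lemma~\ref{mainphiLemma}) is then run with the generalized coefficient bound. Your third step---a "trivial local bound at primes $p\mid M$"---is vague and you yourself flag it as the main obstacle; in the paper no local/Atkin--Lehner analysis is needed because Waibel's estimate packages all of this at the level of Kloosterman sums.

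A secondary point: even granting the Hecke reduction, you claim the exceptional eigenvalue term and the $X^{1/6}$ error "are inherited unchanged from Theorem~\ref{mainThm}." This is not automatic. The coefficient $\tau_j(m,n) = 2^{3-2s_j} i^k \pi^{1-2s_j}\Gamma(2s_j-1)\overline{\rho_j(m)}\rho_j(n)|\tilde m\tilde n|^{1-s_j}$ depends on the same Fourier coefficients you are trying to bound, and the paper must track the $u_\ell$ dependence through \eqref{Bound for tau_j} and \eqref{taujestGEN}; the dyadic summation of Proposition~\ref{PrereqPropGEN} is calibrated to cut off at $A_u(m,n)^2$ precisely so that the tail of $\tau_j$ terms is absorbed into the error.
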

The middle two terms above are not negligible. As a corollary we have
\begin{corollary}
	With the same setting as Theorem~\ref{mainThm3}, suppose $u_\ell^2\ll\tilde{\ell}^\kappa$ for $\ell\in\{m,n\}$ and some $\kappa\in[0,1]$. Then  
	\begin{equation}
		\begin{split}
			\sum_{N|c\leq X} \frac{S(m,n,c,\nu)}{c}-\!\!\!\sum_{r_j\in i(0,\frac14]}\!\!\! \tau_j(m,n)\frac{X^{2s_j-1}}{2s_j-1}
			\ll_{\nu,\ep}\left(| \tilde{m}  \tilde{n}|^{\max\left\{\frac{143}{588},\frac{3+\kappa}{16}\right\}}+X^{\frac16}\right)|  \tilde{m}  \tilde{n}X|^\ep,
		\end{split}
	\end{equation}
and
\begin{align*}
	\sum_{N|c\leq X} \frac{S(m,n,c,\nu)}{c}\ll_{\nu,\ep} \(|  \tilde{m}  \tilde{n}|^{\max\left\{\frac{131}{588},\frac \kappa4 \right\}-\frac\theta2}X^{\theta}+ | \tilde{m}  \tilde{n}|^{\max\left\{\frac{143}{588},\frac{3+\kappa}{16}\right\}}+X^{\frac16}\)|  \tilde{m}  \tilde{n}X|^\ep. 
\end{align*}

\end{corollary}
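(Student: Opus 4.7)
The plan is to derive this Corollary directly from Theorem~\ref{mainThm3} in two steps: first, substitute the hypothesis $u_\ell^2\ll\tilde\ell^\kappa$ into the explicit upper bound for $A_u(m,n)$; second, re-run the exceptional-eigenvalue argument behind Theorem~\ref{mainThm2} with this simplified error.

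For the first inequality, I would start from the four-term upper bound
\[A_u(m,n)\ll|\tilde m\tilde n|^{143/588}+\tilde m^{143/588}|\tilde n|^{3/16}u_n^{1/8}+\tilde m^{3/16}|\tilde n|^{143/588}u_m^{1/8}+|\tilde m\tilde n|^{3/16}(u_mu_n)^{1/8}\]
exhibited inside Theorem~\ref{mainThm3}. The assumption gives $u_\ell^{1/8}\ll\tilde\ell^{\kappa/16}$, so each summand takes the shape $\tilde m^a|\tilde n|^b$ with $a,b\in\{143/588,\,(3+\kappa)/16\}$. The crossover $143/588=(3+\kappa)/16$ occurs at $\kappa=131/147\in[0,1]$, and a two-case split shows that every summand is bounded by $|\tilde m\tilde n|^{\max\{143/588,\,(3+\kappa)/16\}}$. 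Adding the unchanged $X^{1/6}$ term from Theorem~\ref{mainThm3} yields the first displayed inequality.

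For the second inequality, I would move the exceptional-eigenvalue main term back to the left-hand side and estimate it directly by Ramanujan. Using the explicit formula
\[\tau_j(m,n)=2^{3-2s_j}\,i^k\,\pi^{1-2s_j}\,\Gamma(2s_j-1)\,\overline{\rho_j(m)}\,\rho_j(n)\,|\tilde m\tilde n|^{1-s_j}\]
together with the Shimura correspondence and the Kim--Sarnak admissible exponent $\theta$ for $\GL_2/\Q$, the main term is bounded by an expression of the form $|\tilde m\tilde n|^{E-\theta/2}X^\theta$. With trivial $u_\ell$-factors the analysis behind Theorem~\ref{mainThm2} gives $E=131/588$; with the present hypothesis, the same bookkeeping, but with the extra $u_\ell^{1/2}$ accompanying $\rho_j(\ell)$ replaced by $\tilde\ell^{\kappa/4}$, delivers $E=\max\{131/588,\,\kappa/4\}$. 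Absorbing this main-term bound into the first inequality produces the second.

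The main technical point is to verify that the $u_\ell$-dependence in the Shimura-corrected Fourier-coefficient bound produces precisely the exponent $\kappa/4$ on the eigenvalue side. This is the natural Cauchy--Schwarz / Shimura \emph{doubling} of the error-side contribution $(u_mu_n)^{1/8}|\tilde m\tilde n|^{3/16}$ (which becomes $|\tilde m\tilde n|^{(3+\kappa)/16}$ under the hypothesis). Once this bookkeeping is in place, no ideas beyond Theorem~\ref{mainThm3} and the Kim--Sarnak bound are required.
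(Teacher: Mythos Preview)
Your approach is correct and is exactly the intended derivation (the paper states this corollary without proof, so there is nothing to compare against beyond the obvious). Two clarifications will sharpen your write-up. First, for the second inequality you should cite Proposition~\ref{AhgDunEsstBd} and \eqref{Bound for tau_j} rather than ``the Shimura correspondence'': what you actually need is the bound $\tau_j(m,n)/(2s_j-1)\ll A(m,n)|\tilde m\tilde n|^{1/2-s_j+\ep}$, and under the hypothesis $u_\ell\ll\tilde\ell^{\kappa/2}$ one has directly $A(m,n)\ll|\tilde m\tilde n|^{\max\{131/588,\kappa/4\}}$; no ``doubling'' heuristic is needed. Second, the case $r_j=i/4$ (where $2s_j-1=\tfrac12>\theta$) is not covered by the Kim--Sarnak bound and must be dispatched separately via the observation, used in \S9.2, that $\tau_j(m,n)=0$ there because in the mixed-sign setting one of $\rho_j(m),\rho_j(n)$ vanishes by~\eqref{CuspFormR0}.
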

The quantities in both maxima are equal when $\kappa=\frac{131}{147}$. When $\frac{131}{147}<\kappa\leq 1$ and if we choose $\theta=\frac7{64}$, then the $\theta$ term dominates when  
\[|  \tilde{m}  \tilde{n}|^{\frac12+\frac{12}{7}(1-\kappa)}< X <|  \tilde{m}  \tilde{n}|^{\frac{27}{11}-\frac{48}{11}(1-\kappa)}. \]

We modify our estimate to get the following bound suitable for the applications in the next section. Recall that $\alpha_{\nu}\in [0,1)$ is defined as $\nu(\begin{psmallmatrix}
1&1\\0&1
\end{psmallmatrix})=e(-\alpha_\nu)$. 
\begin{theorem}\label{mainThmLastSec}
	With the same setting as Theorem \ref{mainThm}, for $\beta=\frac12$ or $\frac 32$ and $\alpha>0$, we have
	\begin{equation}\label{mainThmLastSecTheEquation}
		\sum_{N|c> \alpha\sqrt{|\tilde m\tilde{n}|}}\frac{S(m,n,c,\nu)}c \mathscr{M}_{\beta}\(\frac{4\pi \sqrt{|\tilde m\tilde{n}|}}c\)\ll_{\alpha,\nu,\ep}|\tilde m\tilde{n}|^{\frac{143}{588}+\ep},
	\end{equation}
	where $\mathscr{M}_\beta$ is the Bessel function $I_\beta$ or $J_\beta$. 
\end{theorem}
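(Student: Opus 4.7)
The plan is a direct application of Abel's (partial) summation, using Theorem~\ref{mainThm2} to control the partial sums of $S(m,n,c,\nu)/c$ and the small-argument asymptotics of the Bessel functions to control the weight. Set $A\defeq 4\pi\sqrt{|\tilde m\tilde n|}$, $y\defeq\alpha\sqrt{|\tilde m\tilde n|}$, $f(t)\defeq\mathscr{M}_\beta(A/t)$, and
\[a_c\defeq\begin{cases}S(m,n,c,\nu)/c,& N\mid c,\\ 0,&\text{otherwise},\end{cases}\qquad T(X)\defeq\sum_{c\leq X}a_c.\]
For every $X>y$, Abel summation gives
\[\sum_{y<c\leq X}a_c f(c)=T(X)f(X)-T(y)f(y)-\int_y^X T(t)f'(t)\,dt,\]
and my task is to send $X\to\infty$ and bound each surviving piece by $|\tilde m\tilde n|^{143/588+\ep}$.

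First I would record the needed Bessel estimates. For $c>y$ the argument $A/c$ lies in $(0,4\pi/\alpha)$, and the power-series expansions of $I_\beta$ and $J_\beta$ show that $v^{-\beta}\mathscr{M}_\beta(v)$ and $v^{1-\beta}\mathscr{M}_\beta'(v)$ are continuous on the compact interval $[0,4\pi/\alpha]$, hence bounded there. This gives $|f(t)|\ll_{\alpha,\beta}(A/t)^\beta$ and $|f'(t)|\ll_{\alpha,\beta}A^\beta/t^{\beta+1}$. Combined with the bound from Theorem~\ref{mainThm2}, whose leading $X$-dependence is $X^{1/6}$ (since $\theta<\tfrac16$), and the fact that $\beta\geq\tfrac12>\tfrac16$, the upper-boundary term $T(X)f(X)$ vanishes as $X\to\infty$.

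For the lower boundary, evaluating Theorem~\ref{mainThm2} at $X=y\asymp\sqrt{|\tilde m\tilde n|}$ produces three contributions of orders $|\tilde m\tilde n|^{131/588}$, $|\tilde m\tilde n|^{143/588}$, $|\tilde m\tilde n|^{1/12}$ (up to $|\tilde m\tilde n|^\ep$), dominated by the middle one; since $f(y)=\mathscr{M}_\beta(4\pi/\alpha)$ is an $\alpha,\beta$-constant, the lower boundary is acceptable. For the integral, inserting the Theorem~\ref{mainThm2} bound together with $|f'(t)|\ll A^\beta/t^{\beta+1}$ yields
\[A^\beta|\tilde m\tilde n|^\ep\int_y^\infty\Big(|\tilde m\tilde n|^{\frac{131}{588}-\frac\theta2}t^\theta+|\tilde m\tilde n|^{\frac{143}{588}}+t^{\frac16}\Big)t^{-\beta-1+\ep}\,dt.\]
Each integrand is integrable at infinity because $\theta<\tfrac16<\beta$, so each integral is controlled by its value at the lower limit $t=y$. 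Using $A^\beta\asymp|\tilde m\tilde n|^{\beta/2}$ and $y\asymp|\tilde m\tilde n|^{1/2}$, the powers of $|\tilde m\tilde n|^{\beta/2}$ cancel and the three terms collapse to orders $|\tilde m\tilde n|^{131/588}$, $|\tilde m\tilde n|^{143/588}$, $|\tilde m\tilde n|^{1/12}$ respectively (times $|\tilde m\tilde n|^\ep$), again dominated by $|\tilde m\tilde n|^{143/588+\ep}$.

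The whole argument is essentially bookkeeping for partial summation; the only mild obstacle is verifying the small-argument Bessel estimates uniformly in $\beta\in\{\tfrac12,\tfrac32\}$ and $\mathscr{M}_\beta\in\{I_\beta,J_\beta\}$, which reduces to the continuity of $v^{-\beta}\mathscr{M}_\beta(v)$ on $[0,4\pi/\alpha]$.
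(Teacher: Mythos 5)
Your proposal is correct and matches the paper's own proof in essence: both use partial (Abel) summation, bounding the partial sums $T(t)=\sum_{N\mid c\le t}S(m,n,c,\nu)/c$ via Theorem~\ref{mainThm2} and the small-argument behaviour $\mathscr{M}_\beta(v)\ll v^\beta$, $\mathscr{M}_\beta'(v)\ll v^{\beta-1}$ on the fixed interval $v\in[0,4\pi/\alpha]$. The one point the paper makes explicit that you leave implicit is the verification that Theorem~\ref{mainThm2} can indeed be applied without an unwanted $X^{1/2}$ term, i.e.\ that $\tau_j(m,n)=0$ for $r_j=\tfrac i4$; the paper re-derives this from \eqref{CuspFormR0} using the sign conditions $\tilde m>0$, $\tilde n<0$ (so $\rho_j(n)=0$ in weight $\tfrac12$ and $\rho_j(m)=0$ in weight $-\tfrac12$). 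Since you invoke Theorem~\ref{mainThm2} as a black box whose stated bound already has no $X^{1/2}$ term, your proof is complete as written; the paper's extra remark is a belt-and-braces check on that theorem, not a logically distinct step here.
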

\begin{remark}
This theorem still works when we have $\tilde m<0$ and $\tilde n>0$, because we can take the conjugation with \eqref{KlstmSumConj} and $\mathscr{M}_\beta$ is a real function on $\R$. 
\end{remark}

\section{Introduction: rank of partitions and Maass-Poincar\'e series}
Suppose $\Lambda=\{\Lambda_1\geq \Lambda_2\geq \cdots\geq \Lambda_\kappa\}$ is a partition of $n$, i.e. $\sum_{j=1}^\kappa \Lambda_j=n$. Let 
\[\rank(\Lambda)\defeq\Lambda_1-\kappa\]
define the rank of this partition, $N(m,n)$ denote the number of partitions of $n$ with rank $m$, and $N(a,b;n)$ denote the number of partitions of $n$ with rank $\equiv a\Mod b$. Let $q=\exp(2\pi i z)=e(z)$ for $z\in \HH$ (the upper-half complex plane) and $w$ be a root of unity. It is known that the generating function of $N(m,n)$ is
\begin{equation}\label{rankGeneratingFunction}
	\mR(w;q)\defeq 1+\sum_{n=1}^\infty\sum_{m=-\infty}^\infty N(m,n) w^mq^n=1+\sum_{n=1}^\infty\frac{q^{n^2}}{(wq;q)_n(w^{-1}q;q)_n},
\end{equation}
where $(a;q)_n\defeq\prod_{j=0}^{n-1}(1-aq^j)$. 
For example, $\mR(1;q)=1+\sum p(n)q^n$ is the generating function for partitions. For integers $b>a>0$, denote $A(\frac ab;n)$ as
\[\mR(\zeta_b^a;q)=:1+\sum_{n=1}^\infty A\(\frac ab;n\)q^n\]
where $\zeta_b=\exp(\frac{2\pi i}b)$ is a $b$-th root of unity. The following identity is easy to get but helpful in understanding the relation between $A(\frac ab;n)$ and $N(a,b;n)$:
\begin{equation}
	bN(a,b;n)=p(n)+\sum_{j=1}^{b-1}\zeta_b^{-aj}A\(\frac jb;q\). 
\end{equation}

The function $\mR(w;q)$ has many beautiful connections and properties. When $w=-1$, it is known that $N(0,2;n)-N(1,2;n)=A(\frac12;n)$ is the Fourier coefficient of Ramanujan's third order mock theta function $f(q)$. We know that the Hardy-Ramanujan asymptotics 
\[p(n)\sim \frac1{4n\sqrt3}e^{\pi\sqrt{\frac 23n}}\]
was perfected by Rademacher's exact formula \eqref{RademacherExactFormula}. Similarly, Dragonette \cite{Dragonette1952} and Andrews \cite{Andrews1966} improved the asymptotic formula of $A(\frac12;n)$ which was conjectured by Ramanujan. The exact formula for $A(\frac12;n)$ was later proven by Bringmann and Ono:
\begin{theorem}[{\cite[Theorem~1.1]{BrmOno2006ivt}}]\label{exactFmlThmMod2}
	The Andrews-Dragonette conjecture is true: 
	\begin{equation}\label{exactFmlMod2}
		A\(\frac 12;n\)=\frac{\pi}{(24n-1)^{\frac 14}}\sum_{k=1}^\infty \frac{(-1)^{\floor{\frac{k+1}2}}A_{2k}(n-\frac{k(1+(-1)^k)}4)}k I_{\frac12}\(\frac{\pi\sqrt{24n-1}}{12k}\). 
	\end{equation}
\end{theorem}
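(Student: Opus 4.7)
The plan is to realize $f(q) := 1+\sum_{n\ge 1}A(\tfrac12;n)q^n$ as the holomorphic part of a harmonic weak Maass form, construct a Maass-Poincar\'e series with matching principal parts, and then identify the two functions through a rigidity argument. This is the Bringmann-Ono strategy, so what follows is an outline of its essential ingredients rather than novel work.

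\emph{Completion and Poincar\'e series.} The generating function $f(q)$ is Ramanujan's third order mock theta function. Following Zwegers, $q^{-1/24}f(q)$ completes to a harmonic weak Maass form $\widehat F(z)$ of weight $\tfrac12$ on a suitable congruence subgroup (one may take $\Gamma_0(144)$) equipped with an admissible half-integral-weight multiplier in the sense of Definition~\ref{Admissibility}. Its shadow is an explicit unary theta of weight $\tfrac32$, and its principal part at $\infty$ is $q^{-1/24}$ with trivial principal parts at the other cusps. In parallel, build a Maass-Poincar\'e series $P(z)$ of the same weight and multiplier, seeded so that its principal parts match those of $\widehat F$ at every cusp. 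Because weight $\tfrac12$ lies below the range in which the naive Poincar\'e series converges absolutely, define $P$ via analytic continuation in an auxiliary spectral parameter $s$ (the Hecke trick) and specialize at the harmonic point $s=\tfrac34$, as in Bruinier's construction.

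\emph{Fourier expansion.} Unfold $P$ at $\infty$ using the Bruhat decomposition. Each double coset contributes $S(1,1-n,c,\nu_\eta)/c$ times an archimedean integral that evaluates to an $I_{1/2}$ Bessel function of argument $4\pi\sqrt{|\tilde m\tilde n|}/c$ with $\tilde m=-\tfrac1{24}$ and $\tilde n=n-\tfrac1{24}$; this becomes precisely $\pi\sqrt{24n-1}/(12k)$ after the reindexing $c=2k$. Rewriting $S(1,1-n,2k,\nu_\eta)$ in terms of the Rademacher sums $A_{2k}$ is a direct computation from the eta-multiplier, and separating the parity of $k$ produces the sign $(-1)^{\lfloor(k+1)/2\rfloor}$ together with the argument shift $n-k(1+(-1)^k)/4$ appearing in \eqref{exactFmlMod2}.

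\emph{Rigidity and conclusion.} Consider $D := \widehat F - P$. By construction the principal parts of $\widehat F$ and $P$ coincide at every cusp, so $D$ is bounded at the cusps; verifying that their shadows also match ($\xi_{1/2}\widehat F = \xi_{1/2}P$) shows that $D$ is holomorphic, hence a holomorphic weight $\tfrac12$ cusp form on the chosen group. By the Serre-Stark classification, weight $\tfrac12$ holomorphic modular forms are spanned by theta series, none of which vanish at all cusps, so $D=0$. Equating $n$-th holomorphic Fourier coefficients of $\widehat F$ and $P$ yields \eqref{exactFmlMod2}, with the overall factor $\pi/(24n-1)^{1/4}$ emerging from the Whittaker normalization at weight $\tfrac12$. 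I expect the main obstacle to be the shadow-matching step, which needs explicit computation of $\xi_{1/2}\widehat F$ from Zwegers' completion and of $\xi_{1/2}P$ from the Hecke-tricked Poincar\'e series with consistent normalizations; a secondary difficulty is convergence of $P$ itself at weight $\tfrac12$, which forces one through the analytic continuation rather than direct summation.
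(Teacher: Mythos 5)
The paper does not prove this theorem at all: it cites it directly from Bringmann--Ono \cite{BrmOno2006ivt} and, in the remark at the end of Section~4, observes only that the rewritten form \eqref{ExactFmlMod2_inKlstmSumCase} ``can also be deduced from a similar process as our proof here using Theorem~\ref{FourierExpn_ofPoincareSeries}.'' Your outline matches that parallel route — completion of $q^{-1/24}f(q)$, construction of a Maass--Poincar\'e series with matching principal part, identification of shadows, and a weight-$\frac12$ rigidity argument — so in strategy you are aligned with both Bringmann--Ono and the paper's mod~$3$ proof. Two places where your implementation diverges from, or falls short of, the paper's:

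\emph{Convergence of the Poincar\'e series.} You propose the Hecke trick (analytic continuation in $s$ and specialization at $s=\tfrac34$). The paper's Theorem~\ref{FourierExpn_ofPoincareSeries} instead invokes the uniform Kloosterman-sum bound (Theorem~\ref{mainThmLastSec}) to prove that the explicit Fourier-coefficient formulas $\beta(n),\beta'(n)$ converge at $k=\tfrac12$ for any admissible multiplier with $\alpha_\nu>0$. This is precisely the new technical input of the paper, and it replaces the analytic continuation; without it (or some substitute) your ``specialize at $s=\tfrac34$'' step does not by itself justify the termwise Bessel/Kloosterman formula.

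\emph{The rigidity step.} Your argument that $D:=\widehat F-P=0$ rests on the claim that the Serre--Stark theta-series basis of $M_{1/2}$ consists of forms ``none of which vanish at all cusps, so $D=0$.'' This is the wrong tool and, as a blanket claim, false: Serre--Stark gives a basis of theta series $\theta_{\psi,t}$, and those with $\psi$ nontrivial \emph{are} cuspidal, so the space $S_{1/2}$ of the relevant level need not be trivial. The correct argument — the one the paper uses in the mod~$3$ proof — is a supported-exponents argument: every Serre--Stark basis element has Fourier coefficients supported on exponents of the form $t\ell^2$ with $t\mid 576$, whereas $D(24z)$ has coefficients supported on $24n-1\equiv 23\pmod{24}$, and no exponent of the form $t\ell^2$ with $t\mid 576$ lies in that residue class (for $t$ coprime to $6$ squares are never $\equiv 23\pmod{24}$, and for $2\mid t$ or $3\mid t$ the exponent is not coprime to $24$). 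That disjointness of support forces $D=0$; the ``cusp form'' and ``non-vanishing at cusps'' assertions are both unnecessary and unsupported.
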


When $w$ is a third root of unity, we find that $A(\frac13;n)=A(\frac23;n)$ is the Fourier coefficient of a Ramanujan's sixth order mock theta function denoted as $\gamma(q)$ in \cite{AndrewsMockThetaSixthOrder}. Bringmann deduced a formula \cite[Theorem~1.1]{BringmannTAMS} for $A(\frac13;n)=N(0,3;n)-N(1,3;n)$, which was used to prove the Andrews-Lewis conjecture about comparing $N(0,3;n)$ and $N(1,3;n)$: 
\[A\(\frac13;n\)=\frac{4\sqrt 3 i}{\sqrt{24n-1}}\sum_{3|c\leq \sqrt{n}}\frac{B_{1,3,c}(-n,0)}{\sqrt c} \sinh\(\frac{\pi\sqrt{24n-1}}{6c}\)  +O(n^\ep).\]
Here $B_{\ell,u,c}(n,m)$ is the notation in \cite[(1.11)]{BringmannTAMS} and we omit its definition here. 
Bringmann and Ono (\cite[Theorem~1.3]{BringmannOno2012}) stated that the above sum, when extended to infinity, gives an exact formula for $A(\frac13;n)$ up to the coefficient of a linear combination of theta functions. A complete proof would require a generalization of \cite{BrmOno2006ivt}. 

Here we provide a proof which also yields uniform estimates for the error in the approximation of this series by partial sums. 
\begin{theorem}\label{exactFmlThmMod3}
	The following equation is true: 
	\begin{equation}\label{exactFmlMod3}
		A\(\frac 13;n\)=\frac{2\pi\,e(-\frac18)}{(24n-1)^{\frac14}}\sum_{3|c>0}\frac{S(0,n,c,\,(\frac \cdot3)\overline{\nu_\eta})}{c}I_{\frac12}\(\frac{\pi\sqrt{24n-1}}{6c}\). 
	\end{equation}
\end{theorem}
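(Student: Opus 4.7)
The plan is to prove the exact formula by showing that the series on the right-hand side arises as the Fourier expansion of a Maass-Poincar\'e series which coincides with a harmonic Maass completion of (a renormalization of) the generating function $\mathcal{R}(\zeta_3;q)$. This is the same Rademacher/Bringmann-Ono template used to establish Theorem~\ref{exactFmlThmMod2}, now carried out for the multiplier $\nu\defeq(\tfrac{\cdot}{3})\,\overline{\nu_\eta}$ rather than $\overline{\nu_\eta}$ itself.

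First, I would identify the object of study. By work of Zwegers (as exploited in \cite{BringmannTAMS,BringmannOno2012}), the function $\mathcal{R}(\zeta_3;q)$, once multiplied by the appropriate fractional power of $q$, is the holomorphic part of a weight $\tfrac12$ harmonic Maass form $\widehat F(z)$ on $\Gamma_0(N)$ (for some explicit $N$ with $3\mid N$) transforming under $\nu$. Its non-holomorphic completion is a period integral whose shadow is a unary theta function. The principal part of $\widehat F$ at $i\infty$ is the single power $q^{-\alpha_\nu}$, and I would need to compute the principal parts of $\widehat F$ at every other cusp of $\Gamma_0(N)$ using the transformation formulas for $\nu_\eta$ together with the quadratic character $(\tfrac{\cdot}{3})$.

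Next, I would construct the Maass-Poincar\'e series $P(z,s;\nu)$ of weight $\tfrac12$, multiplier $\nu$, and level $N$, whose principal part at $i\infty$ matches that of $\widehat F$ and which has trivial principal parts at the remaining cusps (adding Poincar\'e series at those cusps if necessary). Unfolding in the standard way produces a Fourier expansion whose $n$-th coefficient, for $n>0$, is a sum over $N\mid c$ of $S(0,n,c,\nu)/c$ times an $M$-Whittaker function. Specializing to the spectral value $s=\tfrac34$, where the relevant $M$-Whittaker function for weight $\tfrac12$ collapses to $I_{1/2}$, recovers exactly the series in \eqref{exactFmlMod3} after bookkeeping the normalizing constant $2\pi\,e(-\tfrac18)/(24n-1)^{1/4}$. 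Absolute convergence of the resulting series follows from the Sali\'e-type evaluation of $S(0,n,c,\nu)$, which yields $|S(0,n,c,\nu)|\ll c^{1/2+\ep}$, together with the small-argument bound $I_{1/2}(x)\ll x$.

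Finally, I would identify $P(z,\tfrac34;\nu)$ with $\widehat F(z)$ by uniqueness: their difference is a holomorphic weight-$\tfrac12$ modular form on $\Gamma_0(N)$ with multiplier $\nu$ that vanishes at every cusp, hence a cusp form in a space which—for the correct choice of $N$—contains at most unary theta functions, and the matching of all principal parts forces this difference to vanish. Reading off the $n$-th Fourier coefficient of the holomorphic part of $P(z,\tfrac34;\nu)=\widehat F(z)$ then yields $A(\tfrac13;n)$ on the left and the Kloosterman-Bessel series on the right. The main obstacle is precisely this last identification: Bringmann's earlier result in \cite{BringmannTAMS} left an ambiguity by a linear combination of theta functions because principal-part matching was carried out only at $i\infty$; to reach an exact formula free of a theta correction, I must perform the cusp-by-cusp matching for the full set of cusps of $\Gamma_0(N)$ under the multiplier $(\tfrac{\cdot}{3})\overline{\nu_\eta}$ and rule out the residual theta contribution, which is the substantive content of the theorem.
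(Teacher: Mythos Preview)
Your overall strategy matches the paper's: construct the Maass--Poincar\'e series $P_{\frac12}(\tfrac34,0,3;z)$ for the multiplier $\nu=(\tfrac{\cdot}{3})\overline{\nu_\eta}$ on $\Gamma_0(3)$, identify it with the harmonic Maass completion $M(z)$ of $q^{-1/24}\mathcal{R}(\zeta_3;q)$ by matching principal parts at both cusps and killing the holomorphic difference via Serre--Stark (after lifting to level $576$, where the Fourier support $24n-1$ cannot be a square times a divisor of $576$), and then read off the coefficients from Theorem~\ref{FourierExpn_ofPoincareSeries}.

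There is, however, a genuine gap in your convergence argument. You assert that $I_{1/2}(x)\ll x$ for small $x$, but in fact $I_{1/2}(x)=\sqrt{2/(\pi x)}\sinh x\asymp x^{1/2}$ as $x\to 0$. Combined with the Weil-type bound $|S(0,n,c,\nu)|\ll_{\ep,n} c^{1/2+\ep}$, the tail contributes
\[
\sum_{c>X}\frac{c^{1/2+\ep}}{c}\cdot\Big(\frac{1}{c}\Big)^{1/2}=\sum_{c>X}c^{-1+\ep},
\]
which diverges. So the individual Weil bound is \emph{not} enough to make the $k=\tfrac12$ Poincar\'e series converge termwise; this is precisely the borderline case flagged at the end of Theorem~\ref{FourierExpn_ofPoincareSeries}. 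The paper closes this gap by invoking Theorem~\ref{mainThmLastSec}: one does partial summation against the Bessel function and uses the power-saving estimate on $\sum_{c\le x}S(0,n,c,\nu)/c$ coming from the trace formula (cancellation among Kloosterman sums, not just their individual size) to force the tail to converge. In other words, the exact formula in weight $\tfrac12$ is not a formal consequence of the Bringmann--Ono template; it requires the main analytic results of Section~1 as genuine input.
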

\begin{remark}
The relations between our notations can be derived by \eqref{KlstmSumConj}, 
	{\cite[Remark before Sect. 4 on p. 3493]{BringmannTAMS}},
	\[B_{1,3,c}(-n,0)=e(-\tfrac 18)S(1,1-n,c,(\tfrac \cdot 3)\nu_\eta),\quad\text{and}\quad I_{\frac12}(z)=\sqrt{\tfrac2{\pi z}}\;\sinh z. \]
\end{remark}

Let $R_j(n,x)$ be the tail sum on $n>x$ of the above exact formulas for $A(\frac1j;n)$: $j=1$ for \eqref{RademacherExactFormula}, $j=2$ for \eqref{exactFmlMod2} (see \eqref{ExactFmlMod2_inKlstmSumCase}) and $j=3$ for \eqref{exactFmlMod3}. Each multiplier system $\nu$ in these sums satisfies $\alpha_{\nu}=\frac1{24}$ or $\frac{23}{24}$ with $B=24$ and $M|576$ in Definition~\ref{Admissibility}. Clearly both $24n-1$ and $24n-23$ are always coprime with $M$. The admissibility of each multiplier is proved in \cite{AAimrn}, \cite{ahlgrendunn} and Proposition~\ref{Section3Prop}, respectively. Now we can apply Theorem~\ref{mainThmLastSec} to get a power-saving with exponent $\frac14-\frac{143}{588}=\frac1{147}$ less:
\begin{theorem} For $\alpha>0$ we have
\[
R_j(n,\alpha\sqrt{n})\ll_{\alpha,\ep}
 \left\{ 
 \begin{array}{ll}
	n^{-\frac12-\frac1{147}+\ep}, &j=1;\\
	n^{-\frac1{147}+\ep}, &j=2,3.
\end{array} \right.\]
\end{theorem}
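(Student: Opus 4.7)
The plan is to express each tail $R_j(n,\alpha\sqrt n)$ as an explicit power of $n$ times a Bessel--Kloosterman tail of exactly the shape bounded by Theorem~\ref{mainThmLastSec}, and then substitute. All of the heavy analytic lifting is already packaged in Theorem~\ref{mainThmLastSec}: once the parameters are aligned, the claimed bounds fall out from arithmetic on the exponent $\tfrac{143}{588}$.

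First I would align the parameters. Each of the three multipliers in \eqref{RademacherExactFormula}, \eqref{exactFmlMod2} and \eqref{exactFmlMod3} is half-integral weight with $\alpha_\nu\in\{\tfrac1{24},\tfrac{23}{24}\}$, $B=24$, and $M\mid 576$; admissibility is supplied by \cite{AAimrn}, \cite{ahlgrendunn}, and Proposition~\ref{Section3Prop}, and the coprimality hypothesis in Theorem~\ref{mainThm} is automatic since both $24n-1$ and $24n-23$ are coprime to $M$. For $j=1$ the indices are $(m,n)\mapsto(1,1-n)$, giving $\tilde m=\tfrac1{24}$, $\tilde n=\tfrac1{24}-n$ and $|\tilde m\tilde n|=\tfrac{24n-1}{576}$, so $\tfrac{4\pi\sqrt{24n-1}}{24c}$ is precisely $\tfrac{4\pi\sqrt{|\tilde m\tilde n|}}{c}$ with $\beta=\tfrac32$. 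For $j=3$ the indices are $(0,n)$ with $\alpha_\nu=\tfrac1{24}$, and the same identification matches $\tfrac{\pi\sqrt{24n-1}}{6c}$ with $\tfrac{4\pi\sqrt{|\tilde m\tilde n|}}{c}$ at $\beta=\tfrac12$. For $j=2$ one first rewrites \eqref{exactFmlMod2} via \eqref{ExactFmlMod2_inKlstmSumCase} (setting $c=2k$, following \cite{ahlgrendunn}) into Kloosterman form, again with $\beta=\tfrac12$. Since $|\tilde m\tilde n|\asymp n$ in every case, the cutoff $c>\alpha\sqrt n$ becomes $c>\alpha'\sqrt{|\tilde m\tilde n|}$ for some $\alpha'=\alpha'(\alpha)>0$ once $n$ is large.

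Theorem~\ref{mainThmLastSec} then yields
\[\left|\sum_{N\mid c>\alpha\sqrt n}\frac{S(m,n,c,\nu)}{c}\,\mathscr{M}_\beta\!\left(\frac{4\pi\sqrt{|\tilde m\tilde n|}}{c}\right)\right|\ll_{\alpha,\nu,\ep} n^{\frac{143}{588}+\ep}.\]
Multiplying by the prefactor $(24n-1)^{-3/4}\asymp n^{-3/4}$ for $j=1$ gives $R_1(n,\alpha\sqrt n)\ll n^{\frac{143}{588}-\frac34+\ep}$, and
\[\frac{143}{588}-\frac34=-\frac{298}{588}=-\frac{149}{294}=-\frac12-\frac1{147}\]
recovers the first bound. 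For $j=2,3$ the prefactor is $\asymp n^{-1/4}$, and $\tfrac{143}{588}-\tfrac14=-\tfrac{4}{588}=-\tfrac1{147}$ handles the remaining two cases. There is no serious analytic obstacle; the only nontrivial organizational step is the reformulation \eqref{ExactFmlMod2_inKlstmSumCase} required to put the Andrews--Dragonette series in Kloosterman form before invoking Theorem~\ref{mainThmLastSec}.
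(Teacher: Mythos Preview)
Your proposal is correct and follows exactly the approach the paper takes: the paper does not give a separate proof of this theorem but simply notes in the paragraph preceding it that the admissibility and coprimality hypotheses of Theorem~\ref{mainThmLastSec} are met, observes that $\tfrac14-\tfrac{143}{588}=\tfrac1{147}$, and records the resulting bounds. Your write-up is in fact more detailed than the paper's, carrying out the parameter identifications and exponent arithmetic explicitly.
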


When $j=1,2$, this improves \cite[Theorem~1.4, Theorem~1.1]{ahlgrendunn} by removing the square-free requirement. Recently Andersen and Wu \cite[Theorem~1.1]{AndersenWu2022bound36} proved a stronger bound when $j=1$:
\[R_1(n,\alpha\sqrt{n})\ll_{\alpha,\ep} n^{-\frac12-\frac1{36}+\ep}. \]
Another new contribution of us is to extend to the case $j=3$.  

In Section 3, we introduce notation, and in Section 4, we prove Theorem~\ref{exactFmlThmMod3} by following the outline of \cite{BrmOno2006ivt}. Our result in Section 1 substitutes the work in \cite[\S 4]{BrmOno2006ivt} and simplifies the proof. To prove the theorems in Section 1, we require the spectral theory of Maass forms. In Section 5, we provide examples of admissible multipliers, and in Section 6, we state Kuznetsov's trace formula with a test function $\phi$. In Section 7, we estimate $\check\phi$ on the full spectrum.

In Section 8, we prove two average bounds for the coefficients of Maass forms. One bound works well when the spectral parameter is large, while the other works well for small spectral parameters. We also recall the Maass-Selberg relation to deal with the contribution from the continuous spectrum for general level $N$, which is usually dropped by positivity when there are no singular cusps (as in \cite{AAimrn,ahlgrendunn}). However, it must be considered in our general case. In Section 9, we recover the $\tau_j$ terms and complete the proofs of Theorem~\ref{mainThm}, \ref{mainThm2}, \ref{mainThm3}, and \ref{mainThmLastSec}. Finally, we include the proof of Theorem~\ref{FourierExpn_ofPoincareSeries} in the last section.

{
\section{Background and notations}

In this section we recall some basic facts about Maass forms with general weight and multiplier, which can be found in various references like \cite{Proskurin2005,pribitkin,DFI12,AAimrn,ahlgrendunn}. Let $\Gamma = \Gamma_0(N)$ for some $N\geq 1$ denote our congruence subgroup and $\HH$ denote the upper-half complex plane. Fixing the argument $(-\pi,\pi]$, for any $\gamma\in\SL_2(\R)$ and $z=x+iy\in\HH$, we define the automorphic factor
\[j(\gamma,z)\defeq \frac{cz+d}{|cz+d|}=e^{i\arg(cz+d)} \]
and the weight $k$ slash operator 
\[(f|_k\gamma)(z)\defeq j(\gamma,z)^{-k}f(\gamma z) \]
for $k\in \R$. 
We say that $\nu:\Gamma\to \C^\times$ is a multiplier system of weight $k$ if
\begin{enumerate}[label=(\roman*)]
	\item $|\nu|=1$,
	\item $\nu(-I)=e^{-\pi i k}$, and
	\item $\nu(\gamma_1 \gamma_2) =w_k(\gamma_1,\gamma_2)\nu(\gamma_1)\nu(\gamma_2)$ for all $\gamma_1,\gamma_2\in \Gamma$, where
	\[w_k(\gamma_1,\gamma_2)\defeq j(\gamma_2,z)^k j(\gamma_1,\gamma_2z)^k j(\gamma_1\gamma_2,z)^{-k}.\]
\end{enumerate}
If $\nu$ is a multiplier system of weight $k$, then it is also a multiplier system of weight $k'$ for any $k'\equiv k\pmod 2$, and its conjugate $\overline\nu$ is a multiplier system of weight $-k$. 
One can check the basic properties that
\begin{equation}\label{MultiplierSystemBasicProprety}
	\quad \nu(\gamma)\nu(\gamma^{-1})=1,\quad \nu(\gamma \begin{psmallmatrix}
		1&b\\0&1
	\end{psmallmatrix})=\nu(\gamma)\nu(\begin{psmallmatrix}
	1&1\\0&1
\end{psmallmatrix})^b. 
\end{equation}

For any cusp $\mathfrak{a}$ of $\Gamma$, let $\Gamma_{\mathfrak{a}}$ denote its stabilizer in $\Gamma$. For example, $\Gamma_\infty=\{\pm\begin{psmallmatrix}
	1&b\\0&1
\end{psmallmatrix}:b\in\Z\}$. Let $\sigma_{\mathfrak{a}}\in\SL_2(\R)$ denote a scaling matrix satisfying $\sigma_{\mathfrak{a}}\infty=\mathfrak{a}$ and $\sigma_{\mathfrak{a}}^{-1} \Gamma_{\mathfrak{a}}\sigma_{\mathfrak{a}}=\Gamma_\infty$. 
We define $\alpha_{\nu,\mathfrak{a}} \in [0,1)$ by the condition
\begin{equation}\label{AlphaDefine}
\nu\( \sigma_{\mathfrak{a}}\begin{psmallmatrix} 
	1 & 1 \\ 
	0 & 1
\end{psmallmatrix}\sigma_{\mathfrak{a}}^{-1} \) = e(-\alpha_{\nu,\mathfrak{a}}).
\end{equation}
The cusp $\mathfrak{a}$ is called singular if $\alpha_{\nu,\mathfrak{a}}=0$. When $\mathfrak{a}=\infty$ we drop the subscript and denote $\alpha_{\nu}\defeq\alpha_{\nu,\infty}$. For $n\in \Z$, define $ \tilde{n} \defeq n-\alpha_{\nu}$ and $n_{\mathfrak{a}}\defeq n-\alpha_{\nu,\mathfrak{a}}$.
The Kloosterman sums for the cusp pair $(\infty,\infty)$ with respect to $\nu$ are given by 
\begin{equation}
	\label{eq:kloos_def}
	S(m,n,c,\nu) :=\!\!\! \sum_{\substack{0\leq a,d<c \\ \gamma=\begin{psmallmatrix} a&b\\ c&d 
			\end{psmallmatrix}
			\in \Gamma}}\!\!\!  \overline\nu(\gamma) e\(\frac{ \tilde{m} a+ \tilde{n} d}{c}\)
	=\!\!\! \sum_{\substack{\gamma\in \Gamma_\infty\setminus\Gamma/\Gamma_\infty\\\gamma=\begin{psmallmatrix} a&b\\ c&d 
	\end{psmallmatrix}}} \!\!\!  \overline\nu(\gamma) e\(\frac{ \tilde{m} a+ \tilde{n} d}{c}\).
\end{equation}
They satisfy the relationships
\begin{equation}\label{KlstmSumConj}
	\overline{S(m,n,c,\nu)}=\left\{\begin{array}{ll}
		S(1-m,1-n,c,\overline \nu)&\ \  \text{if\ }\alpha_{\nu}>0,\\
		S(-m,-n,c,\overline\nu)  &\ \  \text{if\ }\alpha_{\nu}=0,
	\end{array}\right. 
\end{equation}
because 
\begin{equation}\label{tildeNConj}
	n_{\overline{\nu}}=\left\{\begin{array}{ll}
		-(1-n)_{\nu}&\ \  \text{if\ } \alpha_{\nu}>0,\\
		n  &\ \ \text{if\ }\alpha_{\nu}=0.
	\end{array}\right. 
\end{equation}

We call a function $f:\HH\rightarrow \C$ automorphic of weight $k$ and multiplier $\nu$ on $\Gamma$ if 
\[f|_k\gamma =\nu(\gamma)f \quad\text{for all }\gamma\in\Gamma.\] Let $\mathcal{A}_k(N,\nu)$ denote the linear space consisting of all such functions and $\Lform_k(N,\nu)\subset \mathcal{A}_k(N,\nu)$ denote the space of square-integrable functions on $\Gamma\setminus\HH$ with respect to the measure \[d\mu(z)=\frac{dxdy}{y^2}\]
and the Petersson inner product 
\[\langle f,g\rangle \defeq\int_{\Gamma\setminus\HH} f(z)\overline{g(z)}\frac{dxdy}{y^2} \]
for $f,g\in\Lform_k(N,\nu)$. For $k\in \R$, the Laplacian
\begin{equation}\label{Laplacian}
	\Delta_k\defeq y^2\(\frac{\partial^2}{\partial x^2}+\frac{\partial^2}{\partial y^2}\)-iky \frac{\partial}{\partial x}
\end{equation}
can be expressed as
\begin{align}
	\Delta_k&=-R_{k-2}L_k-\frac k2\(1-\frac k2\)\\
	&=-L_{k+2}R_k+\frac k2\(1+\frac k2\)
\end{align}
where $R_k$ is the Maass raising operator
\begin{equation*}
	R_k\defeq \frac k2+2iy\frac{\partial}{\partial z}=\frac k2+iy\(\frac{\partial}{\partial x}-i\frac{\partial}{\partial y}\)
\end{equation*}
and $L_k$ is the Maass lowering operator
\begin{equation*}
	L_k\defeq \frac k2+2iy\frac{\partial}{\partial \bar z}=\frac k2+iy\(\frac{\partial}{\partial x}+i\frac{\partial}{\partial y}\).
\end{equation*}
These operators raise and lower the weight of an automorphic form as
\[(R_k f)|_{k+2}\;\gamma=R_k(f|_{k}\gamma),\quad (L_k f)|_{k-2}\;\gamma=L_k(f|_{k}\gamma),\quad  \text{for\ } f\in\mathcal{A}_k(N,\nu)\]
and satisfy the commutative relations
\begin{equation}\label{RLOperators}
	R_k\Delta_k=\Delta_{k+2}R_k,\quad L_k\Delta_k=\Delta_{k-2}L_k. 
\end{equation}
Moreover, $\Delta_k$ commutes with the weight $k$ slash operator for all $\gamma\in\SL_2(\R)$. 

We call a real analytic function $f:\HH\rightarrow \C$ an eigenfunction of $\Delta_k$ with eigenvalue $\lambda\in\C$ if
\[\Delta_k f+\lambda f=0. \]
From \eqref{RLOperators}, it is clear that an eigenvalue $\lambda$ for the weight $k$ Laplacian is also an eigenvalue for weight $k\pm 2$. 
We call an eigenfunction $f$ a Maass form if $f\in \mathcal{A}_k(N,\nu)$ is smooth and satisfies the growth condition
\[(f|_k\gamma)(x+iy)\ll y^\sigma+y^{1-\sigma} \]
for all $\gamma\in\SL_2(\Z)$ and some $\sigma$ depending on $\gamma$ when $y\rightarrow +\infty$. Moreover, if a Maass form $f$ satisfies
\[\int_0^1 (f|_k\sigma_{\mathfrak{a}})(x+iy)\;e(\alpha_{\nu,\mathfrak{a}}x)dx=0\]
for all cusps $\mathfrak{a}$ of $\Gamma$, then $f\in\Lform_k(N,\nu)$ and we call $f$ a Maass cusp form. For details see \cite[\S 2.3]{AAimrn}

Let $\mathcal{B}_k(N,\nu)\subset \Lform_k(N,\nu)$ denote the space of smooth functions $f$ such that both $f$ and $\Delta_k f$ are bounded. One can show that $\mathcal{B}_k(N,\nu)$ is dense in $\Lform_k(N,\nu)$ and $\Delta_k$ is self-adjoint on $\mathcal{B}_k(N,\nu)$. If we let $\lambda_0\defeq \lambda_0(k)=\tfrac {|k|}2(1-\tfrac {|k|}2)$, then for $f\in\mathcal{B}_k(N,\nu)$,
\[\langle f,-\Delta_k f\rangle\geq \lambda_0 \langle f,f\rangle,\]
i.e. $-\Delta_k$ is bounded from below. By the Friedrichs extension theorem, $-\Delta_k$ can be extended to a self-adjoint operator on $\Lform_k(N,\nu)$. 
The spectrum of $\Delta_k$ consists of two parts: the continuous spectrum $\lambda\in [\frac14,\infty)$ and a discrete spectrum of finite multiplicity contained in $[\lambda_0,\infty)$. 

Non-zero eigenfunctions corresponding to eigenvalue $\lambda_0$ come from holomorphic modular forms. To be precise, let $M_k(N,\nu)$ denote the space of holomorphic modular forms of weight $k$ and multiplier $\nu$ on $\Gamma$. There is a one-to-one correspondence between all $f\in\Lform_k(N,\nu)$ with eigenvalue $\lambda_0$ and weight $k$ holomorphic modular forms $F$ by
\begin{equation}\label{CuspFormR0} 
	f(z)=\left\{\begin{array}{ll}
	y^{\frac k2}F(z)\quad & k\geq 0,\ F\in M_k(N,\nu),\\
	y^{-\frac k2}\overline{F(z)}\quad & k<0,\ F\in M_{-k}(N,\overline{\nu}). 
\end{array}
\right. 	\end{equation}
For the Fourier expansion $\sum_{n\in \Z} a_y(n)e(\tilde n x)$ of such $f$, we have
\[
\left\{
\begin{array}{lll}
k\geq 0 &\Rightarrow& a_y(n)=0 \text{\ for\ } \tilde n<0,\\
k<0 &\Rightarrow& a_y(n)=0 \text{\ for\ } \tilde n>0.
\end{array}
\right.\]

Let $\lambda_\Delta(G,\nu,k)$ denote the first eigenvalue larger than $\lambda_0$ in the discrete spectrum with respect to the congruence subgroup $G$, weight $k$ and multiplier $\nu$. For weight 0, Selberg showed that $\lambda_\Delta(\Gamma(N),\boldsymbol{1},0)\geq \frac3{16}$ for all $N$ \cite{selberg} and Selberg's famous eigenvalue conjecture states that $\lambda_\Delta(G,\boldsymbol{1},0)\geq\frac14$ for all $G$.  
We introduce the hypothesis $H_\theta$ as
\begin{equation}\label{HTheta}
	H_\theta:\quad \lambda_\Delta(\Gamma_0(N),\boldsymbol{1},0)\geq \tfrac14-\theta^2\ \  \text{for all\ }N. 
\end{equation}
Selberg's conjecture includes $H_0$ and the best progress known today is $H_{\frac 7{64}}$ by \cite{KimSarnak764}. 
We denote $\lambda_\Delta(G,\nu,k)$ as $\lambda_\Delta$ when $(G,\nu,k)$ is clear from context.

Let $\LEigenform_{k}(N,\nu)\subset \Lform_k(N,\nu)$ denote the subspace spanned by eigenfunctions of $\Delta_k$. For each eigenvalue $\lambda$, we write 
\[\lambda=\tfrac14+r^2=s(1-s), \quad s=\tfrac12+ir,\quad r\in i(0,\tfrac14]\cup[0,\infty). \]
So $r\in i\R$ corresponds to $\lambda<\frac14$ and any such $\lambda\in(\lambda_0,\frac 14)$ is called an exceptional eigenvalue. Set
\begin{equation}\label{FirstSpecPara}
	r_\Delta(N,\nu,k)\defeq i\cdot\sqrt{\tfrac14-\lambda_\Delta(\Gamma_0(N),\nu,k)}. 
\end{equation}

Let $\LEigenform_{k}(N,\nu,r)\subset\LEigenform_{k}(N,\nu)$ denote the subspace corresponding to the spectral parameter $r$. Complex conjugation gives an isometry 
\[\LEigenform_{k}(N,\nu,r)\longleftrightarrow\LEigenform_{-k}(N,\overline\nu,r)\]
between normed spaces. 
For each $v\in \LEigenform_{k}(N,\nu,r)$, we have the Fourier expansion
\begin{equation}\label{FourierExpMaassEigenform}
	v(z)=v(x+iy)=c_0(y)+\sum_{ \tilde{n}\neq 0} \rho(n) W_{\frac k2\sgn \tilde{n},\;ir}(4\pi| \tilde{n}|y)e( \tilde{n} x)
\end{equation}
where $W_{\kappa,\mu}$ is the Whittaker function and
\[c_0(y)=\left\{ 
\begin{array}{ll}
	0&\alpha_{\nu}\neq 0,\\
	0&\alpha_{\nu}=0 \text{ and } r\geq 0,\\
	\rho(0)y^{\frac12+ir}&\alpha_{\nu}=0 \text{ and } r\in i(0,\frac14]. 
\end{array}
\right. 
\]
Using the fact that $W_{\kappa,\mu}$ is a real function when $\kappa$ is real and $\mu\in \R\cup i\R$ \cite[(13.4.4), (13.14.3), (13.14.31)]{dlmf}, if we denote the Fourier coefficient of $f_c\defeq \bar f$ as $\rho_c(n)$, then
\begin{equation}\label{FourierCoeffConj}
	\rho_c(n)=\left\{\begin{array}{ll}
	\overline{\rho(1-n)},&	\alpha_{\nu}>0\text{ and }n\neq 0,\\
	\overline{\rho(-n)}, &\alpha_{\nu}=0. 
\end{array}
\right.\end{equation}


There are two fundamental multiplier systems of weight $\frac12$. The theta-multiplier $\nu_{\theta}$ on $\Gamma_{0}(4)$ is given by
\begin{equation}\label{thetaFunctionStandard}
	\theta(\gamma z) = \nu_{\theta}(\gamma) \sqrt{cz+d}\; \theta(z), \quad \gamma=\begin{pmatrix}
		a&b\\c&d
	\end{pmatrix}\in \Gamma_0(4)
\end{equation}
where
\[
\theta(z) \defeq \sum_{n\in\Z} e(n^2 z), \quad \nu_{\theta}(\gamma)=\(\frac cd\)\ep_d^{-1}, \quad \ep_d=\left\{ \begin{array}{ll}
	1&d\equiv 1\Mod 4,\\
	i&d\equiv 3\Mod 4, 
\end{array}\right.
\]
and $\(\frac \cdot\cdot\)$ is the extended Kronecker symbol. 
The eta-multiplier $\nu_\eta$ on $\SL_2(\Z)$ is given by
\begin{equation}
	\eta(\gamma z) = \nu_{\eta}(\gamma) \sqrt{cz+d}\; \eta(z), \quad \gamma=\begin{pmatrix}
		a&b\\c&d
	\end{pmatrix}\in \SL_2(\Z)
\end{equation}
where
\begin{equation}
	\eta(z) \defeq q^{\frac1{24}}\prod_{n=1}^\infty (1-q^n),\quad q=e(z).
\end{equation}
Let $((x))\defeq x-\floor{x}-\frac12$ when $x\in \R\setminus\Z$ and $((x))\defeq 0$ when $x\in \Z$. We have the explicit formula \cite[(74.11), (74.12)]{Rad73Book}
\begin{equation}\label{etaMultiplier}
	\nu_\eta(\gamma)=e\(-\frac18\)e^{-\pi i s(d,c)}e\(\frac{a+d}{24c}\), \quad s(d,c)\defeq\sum_{r\Mod c}\(\(\frac rc\)\)\(\(\frac{dr}c\)\),
\end{equation}
for all $c\in \Z\setminus \{0\}$ and $\nu_\eta\(\begin{psmallmatrix}
	1&b\\0&1
\end{psmallmatrix}\)=e\(\tfrac{b}{24}\)$. Another formula \cite{Knopp70Book} for $c>0$ is
\begin{equation}\label{KnoppFmlEta}
	\nu_{\eta}(\gamma)=\left\{
	\begin{array}{ll}
		\(\tfrac dc\)e\left\{\tfrac1{24}\Big((a+d)c-bd(c^2-1)-3c\Big)\right\} & \text{if } c \text{ is odd,}\\
		\(\tfrac cd\)e\left\{\tfrac1{24}\Big((a+d)c-bd(c^2-1)+3d-3-3cd\Big)\right\} & \text{if } c \text{ is even.}
	\end{array}
	\right. 
\end{equation}
The properties $\nu_{\eta}(-\gamma)=i\nu_\eta(\gamma)$ when $c>0$ and $e(\frac{1-d}8)=(\frac 2d)\ep_d$ for odd $d$ are convenient. 

A classical Kloosterman-type sum is defined as
\begin{equation}\label{AknAsKloostermanSum}
	A_c(n)\defeq \sum_{d\Mod c^*}e^{\pi is(d,c)}e\(-\frac {dn}c\). 
\end{equation}
One can check that $A_c(n)=e(-\frac 18)S(1,1-n,c,\nu_\eta)$ (see \cite[\S 2.8]{AAimrn}) and 
\begin{equation}\label{AknAsKloostermanSum2}
(-1)^{\floor{\frac{c+1}2}}A_{2c}\(n-\tfrac{c(1+(-1)^c)}4\)=e(\tfrac18)\overline{S(0,n,2c,\psi)}
\end{equation}
where $\psi$ is defined as in \eqref{Nu2} (see \cite[Lemma~3.1]{ahlgrendunn}). For our application in the next section, one can verify that $(\frac d3)\overline{\nu_\eta}$ is a weight $\frac12$ multiplier on $\Gamma_0(3)$. The fact that $w_{\pm \frac12}(\gamma_1,\gamma_2)\in \{\pm 1\}$ is helpful for this verification.

\section{Proof of Theorem~\ref{exactFmlThmMod3}}
Now we use the theorems in Section~1 to prove Theorem \ref{exactFmlThmMod3}. We follow the outline of \cite{BrmOno2006ivt} and the idea is that $q^{-\frac1{24}}\mR(w,q)$ equals the holomorphic part of a Poincar\'e series whose Fourier coefficients can be explicitly calculated.   

The following construction can be found in \cite{BrmOno2006ivt,BringmannOno2012}. Let $k\in \frac12+\Z$, $z=x+iy$ for $x,y\in \R$ and $y\neq 0$, $s\in \C$, $4|N$ is a positive integer. 
We define the weight $k$ hyperbolic Laplacian (different from the former section) by
\[\widetilde{\Delta}_k\defeq -y^2\(\frac{\partial^2}{\partial x^2}+\frac{\partial^2}{\partial y^2}\)+iky\(\frac{\partial}{\partial x}+i\frac{\partial}{\partial y}\). \]

\begin{definition}
	With the notations above, let $\chi$ be a Dirichlet character modulo $N$. A weight $k$ harmonic Maass form on $\Gamma_0(N)$ with Nebentypus $\chi$ is any smooth function $f:\HH\rightarrow \C$ satisfies: 
	
	(1) For all $\gamma\in \Gamma_0(N)$, we have $f(\gamma z)=\chi(d)\nu_\theta(\gamma)^{2k}(cz+d)^k f(z)$; 
	
	(2) $\widetilde{\Delta}_k f=0$;
	
	(3) There exists a polynomial $\mathcal{P}(z)=\sum_{n\leq 0} a^+(n) q^n$ with coefficients in $\C$ such that \[f(z)-\mathcal{P}(z)=O(e^{-Cy})\] for some $C>0$. Analogous conditions are required for all cusps. 
\end{definition}

\begin{remark}
We denote the space of such harmonic Maass forms by $H_k(N,\chi\nu_\theta^{2k})$. The polynomial $\mathcal{P}$ is called the principal part of $f$ at the cusp $\infty$, with analogous definition at other cusps. When the transformation formula in condition (1) is replaced by $f(\gamma z)=\nu(\gamma)(cz+d)^k f(z)$ for some multiplier system $\nu$, we call $f$ a weight $k$ harmonic Maass form for $(\Gamma_0(N),\nu)$. 
\end{remark}

Denote the anti-linear differential operator $\xi_k$ by
\[(\xi_k g)(z)\defeq 2iy^k\;\overline{\frac{\partial}{\partial \overline z}\,(g(z))}=R_{-k}(y^k\overline{g(z)}) \]
where $R_k$ is the Maass raising operator defined in \eqref{RLOperators}. If we let $G(z)=g(Bz)$ for some constant B, one can check that $(\xi_k G)(z)=B^{1-k}(\xi_k g)(Bz)$. 
{Let $S_k(N,\nu)$ be the space of holomorphic cusp forms on $\Gamma_0(N)$ with multiplier system $\nu$.} The following lemma is crucial in this section:
\begin{lemma}[{\cite[Proposition~3.2]{BruinierFunke},\cite[Lemma~2.2]{BringmannOno2012}}]\label{xiOperatorMapToCuspForms}
	The map 
	\[\xi_k: H_k(N,\chi\nu_\theta^{2k})\rightarrow S_{2-k}(N,\overline{\chi}\nu_{\theta}^{-2k})\]
	is a surjective map. Moreover, if $f\in H_k(N,{\chi\nu_\theta^{2k}})$ has Fourier expansion
	\[f(z)=\sum_{n\geq n_0}c_f^+(n)q^n+\sum_{n<0}c_f^-(n)\Gamma(1-k,4\pi|n|y)q^n\quad\ \text{for\ some\ }n_0\in \Z,\]
	then 
	\[(\xi_k f)(z)=-(4\pi)^{1-k}\sum_{n=1}^\infty \overline{c_f^-(-n)} n^{1-k}q^n. \]
\end{lemma}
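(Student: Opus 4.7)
The plan is to verify the mapping properties and Fourier expansion by direct calculation, and to handle surjectivity separately via Poincar\'e series.

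First I would establish the transformation law. Factor $\xi_k=R_{-k}\circ T$, where $T(f):=y^k\bar f$. The map $T$ sends a weight-$k$ form with multiplier $\chi\nu_\theta^{2k}$ to a weight-$(-k)$ form with conjugate multiplier $\bar\chi\nu_\theta^{-2k}$, as follows immediately from $\im(\gamma z)=y|cz+d|^{-2}=y(cz+d)^{-1}\overline{(cz+d)^{-1}}$ combined with the automorphy of $f$. The raising operator $R_{-k}$ then shifts the weight up by two without changing the multiplier. Equivalently, differentiating the automorphy relation in $\bar z$ and conjugating produces directly $\xi_k f(\gamma z)=\bar\chi(d)\nu_\theta^{-2k}(\gamma)(cz+d)^{2-k}\xi_k f(z)$.

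Next I would show $\xi_k f$ is holomorphic using harmonicity. Writing $\widetilde\Delta_k=-4y^2\partial_z\partial_{\bar z}+2iky\,\partial_{\bar z}$, the condition $\widetilde\Delta_k f=0$ gives the pointwise identity $\partial_z\partial_{\bar z}f=(ik/2y)\,\partial_{\bar z}f$. Applying the product rule to $\xi_k f=2iy^k\overline{\partial_{\bar z}f}$ together with $\partial_{\bar z}y^k=(ik/2)y^{k-1}$ and $\partial_{\bar z}\overline{\partial_{\bar z}f}=\overline{\partial_z\partial_{\bar z}f}$, the two resulting contributions cancel and we obtain $\partial_{\bar z}(\xi_k f)=0$. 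For the Fourier formula, $\partial_{\bar z}$ annihilates the holomorphic piece; for $n<0$, the identity $\partial_t\Gamma(1-k,t)=-t^{-k}e^{-t}$ combined with the observation $e^{-4\pi|n|y}q^n=\overline{q^{|n|}}$ yields
\[\partial_{\bar z}\bigl(\Gamma(1-k,4\pi|n|y)\,q^n\bigr)=-\tfrac{i}{2}(4\pi|n|)^{1-k}y^{-k}\,\overline{q^{|n|}}.\]
Conjugating and multiplying by $2iy^k$ produces the claimed expression $-(4\pi)^{1-k}\sum_{m\geq 1}\overline{c_f^-(-m)}\,m^{1-k}q^m$. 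The exponential decay of this sum confirms cuspidality at $\infty$, and the analogous computation after slashing by scaling matrices $\sigma_{\mathfrak a}$ handles every other cusp, so $\xi_k f\in S_{2-k}(N,\bar\chi\nu_\theta^{-2k})$.

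The main obstacle is surjectivity. My plan is to construct, for each $m\geq 1$, a harmonic Maass--Poincar\'e series $F_m\in H_k$ whose principal part at $\infty$ equals $q^{-m}$ and which has trivial principal parts at every other cusp, then read off $\xi_k F_m\in S_{2-k}$ from the Fourier formula already proved. The collection $\{\xi_k F_m\}_{m\geq 1}$ spans $S_{2-k}$ by the Bruinier--Funke pairing: the bilinear form $\{g,F\}:=\sum_{m\geq 1}c_g(m)\overline{c_F^-(-m)}$ equals, up to a non-zero constant, the regularized Petersson inner product of $g$ against $\xi_k F$, and its non-degeneracy forces any cusp form $g$ orthogonal to the image of $\xi_k$ to vanish. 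Establishing convergence of the Poincar\'e series, matching its principal parts exactly, and justifying the pairing identity by Stokes' theorem on a truncated fundamental domain is the technically demanding step.
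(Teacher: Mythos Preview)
The paper does not prove this lemma; it is quoted from \cite[Proposition~3.2]{BruinierFunke} and \cite[Lemma~2.2]{BringmannOno2012} and used as a black box. Your write-up is a correct reconstruction of the standard Bruinier--Funke argument: the transformation law and holomorphicity computations are right, the Fourier calculation checks out line by line, and your plan for surjectivity via Maass--Poincar\'e series together with the non-degeneracy of the pairing $\{g,F\}$ is exactly the mechanism used in the cited references. There is nothing to correct and nothing materially different to compare against.
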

\begin{remark}
	We denote the holomorphic part of $f$ as 
	\[f_h(z)\defeq \sum_{n\geq n_0}c_f^+(n)q^n=\mathcal{P}(z)+\sum_{n>0}c_f^+(n)q^n\]
	and the non-holomorphic part of $f$ as 
	\[f_{nh}(z)\defeq \sum_{n<0}c_f^-(n)\Gamma(1-k,4\pi|n|y)q^n,\]
	where $\Gamma(s,\beta)$ is the incomplete Gamma function defined by
	\[\Gamma(s,\beta)=\int_\beta^\infty t^{s-1}e^{-t}dt,\quad \beta>0. \]  
	We also define a mock modular form as the holomorphic part of a harmonic Maass form. 
\end{remark}
Now we construct an example of a harmonic Maass form. Let $\nu$ be an admissible multiplier system on $\Gamma_0(N)$ where $\alpha_\nu>0$ and $B$, $M$, $\nu'$ and $D$ are as in Definition \ref{Admissibility},
\[\mathcal{M}_s(y)\defeq  |y|^{-\frac k2} M_{\frac k2 \sgn y,\ s-\frac12}(|y|)\quad\text{and}\quad  \varphi_{s,k}(z)\defeq \mathcal{M}_s(4\pi y)e(x)\]
where $M_{\alpha,\beta}$ is the standard $M$-Whittaker function. One can check that $\varphi_{s,k}(z)$ is an eigenfunction of $\widetilde{\Delta}_k$ with eigenvalue $s(1-s)+\frac{k^2-2k}4$. 
We define the Maass-Poincar\'e series by
\begin{equation}\label{MaassPoincareSeries}
	P_k(s,m,N;z)\defeq \frac1{\Gamma(2-k)}\sum_{\gamma=\begin{psmallmatrix}
			a &b\\c&d
		\end{psmallmatrix} \in \Gamma_\infty\setminus \Gamma_0(N)} \overline \nu(\gamma)(cz+d)^{-k} \varphi_{s,k}(\tilde m\gamma z). 
\end{equation}
By \cite[Lemma~3.1]{BringmannOno2012}, when $\re s>1$, the above series is absolutely and uniformly convergent. 
As in \cite[Theorem~3.2 \& Remark~(1)]{BringmannOno2012}, we have the following theorem for $P_k$ (note that we have replaced their $2-k$ by $k$): 
\begin{theorem}\label{FourierExpn_ofPoincareSeries}
	With the notation above in the section, when $k\leq -\frac12$ is half-integral and $\tilde m<0$, we have
	\[P_k(1-\tfrac k2,m,N;Bz)\in H_{k} (\Gamma_0(M),(\tfrac{|D|}\cdot)\nu_\theta^{2k} ) \]
	and
	\begin{align*}
		P_k\(1-\tfrac k2,m,N;z\)&=\frac{1-k}{\Gamma(2-k)}\big(\Gamma(1-k)-\Gamma(1-k,4\pi|\tilde m|y)\big)q^{\tilde m}\\
	&+\sum_{\tilde{n}>0} \beta(n) q^{\tilde{n}}+\sum_{ \tilde{n}<  0} \beta'(n)\Gamma\(1-k,4\pi |\tilde n| y\)q^{\tilde{n}},\\
	\end{align*}
	where
	\[\beta(n)=i^{-k} 2\pi \Big|\frac{\tilde m}{\tilde n}\Big|^{\frac{1-k}2}\sum_{N|c>0} \frac{S(m,n,c,\nu)}{c}I_{1-k}\(\frac{4\pi \sqrt{|\tilde m\tilde n|}}c\)\]
	and
	\[\beta'(n)= \frac{ i^{-k}2\pi}{\Gamma(1-k)}\Big|\frac{\tilde m}{\tilde n}\Big|^{\frac{1-k}2}\sum_{N|c>0} \frac{S(m,n,c,\nu)}{c}J_{1-k}\(\frac{4\pi \sqrt{|\tilde m\tilde n|}}c\). \]
	This theorem also holds when $k=\frac12$, provided that we ensure the convergence of the formulas for $\beta(n)$ and $\beta'(n)$. We can guarantee the convergence of these formulas for any admissible multiplier $\nu$ satisfying $\alpha_{\nu}>0$. 
\end{theorem}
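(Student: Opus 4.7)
The plan is to follow the method of \cite[Theorem~3.2]{BringmannOno2012}, adapting it to an admissible half-integral weight multiplier $\nu$ on $\Gamma_0(N)$. First I would verify that $P_k(1-k/2, m, N; z)$ transforms as a weight $k$ form with multiplier $\nu$ on $\Gamma_0(N)$ and is annihilated by $\widetilde{\Delta}_k$. Automorphy is immediate from the coset construction once uniform convergence on compact subsets of $\HH$ is established; by \cite[Lemma~3.1]{BringmannOno2012} this holds whenever $\re s > 1$, which is the case for $s = 1 - k/2$ when $k \leq -\tfrac12$. Harmonicity then follows because $\varphi_{s,k}$ is an eigenfunction of $\widetilde{\Delta}_k$ with eigenvalue $s(1-s) + (k^2 - 2k)/4$, and this quantity vanishes precisely at $s = 1 - k/2$.

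Second, I would compute the Fourier expansion at $\infty$ by Bruhat-cell unfolding, separating the identity coset ($c = 0$) from the $c > 0$ contributions. At $s = 1 - k/2$, the relevant $M$-Whittaker function simplifies via a standard confluent hypergeometric identity to an expression involving the incomplete Gamma function $\Gamma(1-k, 4\pi|\tilde m|y)$, producing exactly the advertised principal-part term $\tfrac{1-k}{\Gamma(2-k)}(\Gamma(1-k) - \Gamma(1-k, 4\pi|\tilde m|y))q^{\tilde m}$. For the $c > 0$ cosets, parametrizing by $d \pmod c$ assembles the Kloosterman sum $S(m,n,c,\nu)$; writing $\gamma z = a/c - 1/(c^2(z+d/c))$ and evaluating the resulting $x$-integral via the classical integral formula for $M$-Whittaker functions yields a product of $S(m,n,c,\nu)/c$, a Bessel factor, and a Whittaker factor in $y$. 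At $s = 1 - k/2$, the identity $W_{k/2,\,(1-k)/2}(z) = z^{k/2}e^{-z/2}$ causes the Whittaker factor for $\tilde n > 0$ to collapse to $e^{-2\pi \tilde n y}$, producing the holomorphic coefficient $\beta(n)$ with Bessel weight $I_{1-k}$; for $\tilde n < 0$ the Whittaker factor becomes proportional to $\Gamma(1-k, 4\pi|\tilde n|y)$, producing the non-holomorphic part with coefficient $\beta'(n)$ and Bessel weight $J_{1-k}$.

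For the level-lifting claim, admissibility condition~(1) provides an injection $z \mapsto Bz$ from weight $k$ eigenforms on $(\Gamma_0(N), \nu)$ into those on $(\Gamma_0(M), (\tfrac{|D|}\cdot)\nu_\theta^{2k})$, so the pullback $P_k(1-k/2, m, N; Bz)$ lies in the correct weight $k$ automorphic space on $\Gamma_0(M)$. The harmonic-Maass-form growth requirement at every cusp of $\Gamma_0(M)$ is inherited from the fact that $P_k(1-k/2,m,N;\cdot)$ on $\Gamma_0(N)$ has a principal part only at $\infty$ and is exponentially decaying at every other cusp, a property that is standard for Poincaré series and survives under the lifting map.

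The main obstacle is the case $k = \tfrac12$: here $\re s = \tfrac34 < 1$, so the defining Poincaré series diverges absolutely and the coefficient series $\beta(n), \beta'(n)$ are only conditionally convergent at best. My plan is to leverage admissibility condition~(2), the average Weil bound, together with the small-argument asymptotic $I_{1/2}(u), J_{1/2}(u) \ll u^{1/2}$. Decomposing $c$ into dyadic blocks $[2^j, 2^{j+1}]$ and applying Abel summation against the average bound yields convergence of the tails of $\beta(n)$ and $\beta'(n)$. To identify these series with the Fourier coefficients of $P_{1/2}(3/4, m, N; z)$, I would first work with $s$ in the half-plane $\re s > 1$, where the Fourier-coefficient computation above is rigorous, and then analytically continue the resulting identities to $s = 3/4$ using the integral representations of the Bessel and Whittaker functions, together with the convergence estimate just described.
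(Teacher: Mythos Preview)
Your outline for $k\le -\tfrac12$ is essentially the paper's computation: separate the $c=0$ coset, use the Whittaker identities $M_{-k/2,\,1/2-k/2}(y)=(1-k)y^{k/2}(\Gamma(1-k)-\Gamma(1-k,y))e^{y/2}$ and $W_{\pm k/2,\,1/2-k/2}(y)=y^{k/2}e^{\mp y/2}$ or $y^{k/2}e^{y/2}\Gamma(1-k,y)$, unfold the $c>0$ part into a Kloosterman sum times an $x$-integral, and evaluate the integral via the classical formula (the paper cites Bruinier's book). The level-lifting claim is handled exactly as you say.

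The gap is in your $k=\tfrac12$ convergence argument. The average Weil bound (condition~(2) in Definition~\ref{Admissibility}) only gives
\[
\sum_{N|c\le X}\frac{|S(m,n,c,\nu)|}{c}\ll_{\nu,\ep} X^{\frac12}|\tilde m\tilde n X|^{\ep},
\]
and since $I_{1/2}(a/c),\,J_{1/2}(a/c)\asymp (a/c)^{1/2}$ for $c$ large, a dyadic block $c\in[C,2C]$ contributes $\asymp a^{1/2}C^{\ep}$, which does not decay; equivalently, Abel summation against $\tfrac{d}{dt}\mathscr{M}_{1/2}(a/t)\ll a^{1/2}t^{-3/2}$ produces the divergent integral $\int^{\infty}t^{-1+\ep}\,dt$. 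A square-root bound on partial sums is exactly on the boundary and cannot close the argument.

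The paper instead feeds back its \emph{main power-saving results}. For $\tilde n>0$ (mixed sign $\tilde m\tilde n<0$) it invokes Theorem~\ref{mainThmLastSec}, which already packages the tail $\sum_{c>\alpha\sqrt{|\tilde m\tilde n|}}S(m,n,c,\nu)c^{-1}\mathscr{M}_{1/2}(4\pi\sqrt{|\tilde m\tilde n|}/c)$ as a convergent quantity. For $\tilde n<0$ (same sign $\tilde m\tilde n>0$), Theorem~\ref{mainThmLastSec} is not available, and the paper uses Goldfeld--Sarnak \eqref{GoldfSbound} together with two spectral facts: (i) $\tau_j(m,n)=0$ at $r_j=\tfrac i4$ because for weight $\tfrac12$ and $\alpha_\nu>0$ the forms at $\lambda_0$ have $\rho_j(n)=0$ whenever $\tilde n<0$ (see \eqref{CuspFormR0}), and (ii) the remaining exceptional parameters satisfy $2s_j-1\le\theta=\tfrac{7}{64}<\tfrac13$ by Proposition~\ref{specParaBoundWeightHalf}. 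This yields $s(m,n,X)\ll_{m,n} X^{1/3+\ep}$, which \emph{does} survive Abel summation against $a^{1/2}t^{-3/2}$. Your proposed analytic continuation from $\re s>1$ down to $s=\tfrac34$ does not sidestep this: one still has to show the limiting coefficient series converge, and that is precisely the missing input.
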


We postpone the proof of Theorem~\ref{FourierExpn_ofPoincareSeries} to the last section and now prove Theorem~\ref{exactFmlThmMod3}. 
Define the theta function as
\[\theta(z;h,N)\defeq \sum_{n\equiv h\Mod N}nq^{\frac{n^2}{24}}. \]
It is well known that the above theta functions are holomorphic cusp forms of weight $\frac32$ whose transformation formulas can be computed via \cite{ShimuraHalfItglW}. Moreover, Bringmann and Ono \cite{BrmOno2010} showed that some period integral of a linear combination of such theta functions can be added as a non-holomorphic part to $q^{-\frac1{24}}\mR(w;q)$ to get a harmonic Maass form. We call that combination a shadow of $q^{-\frac1{24}}\mR(w;q)$. 

When $w\neq 1$ is a root of unity, by \cite[Theorem~7.1]{ZagierBourbakiMockTheta} we know that $q^{-\frac1{24}}\mR(w;q)$ is a mock modular form of weight $\frac12$ with shadow proportional to 
\begin{equation}\label{shadow}
	\(w^{\frac12}-w^{-\frac12}\)\sum_{n\in \Z} \(\frac{12}n\)nw^{\frac n2}q^{\frac{n^2}{24}}.
\end{equation} 
Hence the shadow of $q^{-\frac1{24}}\mR(-1;q)$ is proportional to $\theta(z;1,6)$ as \cite[Remark, p.251]{BrmOno2006ivt} and a computation shows that the shadow of $q^{-\frac1{24}}\mR(e^{\frac{2\pi i}3};q)$ is proportional to $\theta(z;1,12)+\theta(z;5,12)$. Moreover, the differential operator $\xi_k$ maps a weight $k$ harmonic Maass form to its shadow. 

We take our weight $\frac12$ multiplier system $\nu=(\frac d3)\overline{\nu_\eta}$ on $\Gamma_0(3)$ to define the Maass-Poincar\'e series $P_k(s,m,3;z)$ in \eqref{MaassPoincareSeries}. This multiplier is admissible with $B=24$ and $|D|=4$, i.e. the trivial Nebentypus. Denote 
\[P(z)\defeq P_{\frac12}(\tfrac34,0,3;z),\quad  \text{so }P(24z)\in H_{\frac12}(576,\nu_\theta) \]
and we write the Fourier expansion of $P(z)$ as in Theorem~\ref{FourierExpn_ofPoincareSeries}. 

We define $M(z)$ to be the unique harmonic Maass form such that $q^{-\frac1{24}}\mR(e^{\frac{2\pi i}3};q)$ is its holomorphic part. It follows that $M(24z)$ is a weight $\frac12$ harmonic Maass form for some $\Gamma_0(M')$ and Nebentypus $\chi'$. If we can establish the equality $M(24z)=P(24z)$, then Theorem~\ref{exactFmlThmMod3} is proved using Theorem~\ref{FourierExpn_ofPoincareSeries}. The rest of this section is devoted to proving this equality.

Decompose $P(24z)$ and $M(24z)$ into holomorphic and non-holomorphic parts as 
\begin{align}\label{PrincipalPart}
	P(24z)&=P_h(24z)+P_{nh}(24z)=q^{-1}+\sum_{n=1}^\infty \beta(n)q^{24n-1}+P_{nh}(24z),\\
	M(24z)&=M_h(24z)+M_{nh}(24z)=q^{-1}\mR(e^{\frac{2\pi i}3};q^{24})+M_{nh}(24z). 
\end{align}
\begin{lemma}\label{MOnGamma0_3}
	$M(z)$ is a weight $\frac12$ harmonic Maass form for $(\Gamma_0(3),(\frac d3)\overline{\nu_\eta})$. 
\end{lemma}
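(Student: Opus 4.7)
The plan is to verify the three defining conditions for $M$ to be a weight $\frac{1}{2}$ harmonic Maass form for $(\Gamma_0(3),(\tfrac{\cdot}{3})\overline{\nu_\eta})$. Two of these conditions follow immediately from the construction of $M$ as the Zwegers-type completion of the mock modular form $q^{-1/24}\mR(e^{2\pi i/3};q)$. Indeed, by results recalled in \cite{ZagierBourbakiMockTheta, BrmOno2010, BringmannOno2012}, adjoining to $q^{-1/24}\mR(e^{2\pi i/3};q)$ the non-holomorphic Eichler integral of its shadow (proportional to $\theta(z;1,12)+\theta(z;5,12)$) produces a smooth function annihilated by $\widetilde{\Delta}_{1/2}$, with principal part $q^{-1/24}$ at $\infty$ and rapid decay at the only other cusp of $\Gamma_0(3)$, namely $0$.

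The substantive step is the transformation law. It suffices to verify
\[
M(\gamma z) = \overline{(\tfrac{d}{3})\nu_\eta(\gamma)}\,(cz+d)^{1/2}M(z)
\]
on a set of generators of $\Gamma_0(3)$. On $T=\begin{psmallmatrix}1&1\\0&1\end{psmallmatrix}$ this is immediate from the Fourier expansion: both $M_h$ and $M_{nh}$ are series in $q^{n-1/24}$, so $M(z+1)=e(-1/24)M(z)$, matching $(\tfrac{1}{3})\overline{\nu_\eta}(T)=e(-1/24)$ via \eqref{etaMultiplier}. For the remaining generator(s), I would invoke the explicit transformation of the completed mock modular form attached to $\mR(w;q)$ at a cube root of unity, which is worked out via Zwegers's framework and appears, in a slightly different normalization, in \cite{BringmannOno2012}. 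This yields the transformation of $M$ under $\Gamma_0(3)$ with a specific, explicitly computable multiplier.

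The main obstacle is to match this multiplier with $(\tfrac{d}{3})\overline{\nu_\eta}$. The comparison reduces, via the explicit formula \eqref{KnoppFmlEta} for $\nu_\eta$ at odd $c$, to a finite computation involving Kronecker symbols modulo $3$, the sign conventions baked into $\nu_\theta$, and a root of unity picked up from the parameter $w=e^{2\pi i/3}$. One consistency check is that $M(24z)$ is already known to lie in $H_{1/2}(\Gamma_0(M'),\chi')$ for some $(M',\chi')$, so any transformation we derive on $\Gamma_0(3)$ must restrict compatibly after rescaling $z\mapsto 24z$; this observation does not by itself identify the multiplier on $\Gamma_0(3)$, because the conjugating matrix $\begin{psmallmatrix}24&0\\0&1\end{psmallmatrix}$ is not in $\SL_2(\Z)$, so a direct check on generators is unavoidable. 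Once the transformation law holds on generators, it extends to all of $\Gamma_0(3)$ by the cocycle identity for multiplier systems, completing the proof.
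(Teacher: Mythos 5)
Your overall strategy matches the paper's: reduce to checking the transformation law on generators of $\Gamma_0(3)$, handle $T$ by the Fourier expansion, and dispose of the other generator by appealing to the explicit modular transformation of the completed rank generating function at a cube root of unity. The remarks about harmonicity, growth, and the $z\mapsto 24z$ conjugation not being in $\SL_2(\Z)$ are all sound.

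The place where your sketch falls short of an actual proof is precisely the step you flag as "I would invoke the explicit transformation\ldots{} which appears\ldots{} in \cite{BringmannOno2012}." That reference does not hand you the transformation of $M$ under $\begin{psmallmatrix}-1&1\\-3&2\end{psmallmatrix}$ in a usable form. The paper has to work for it: it decomposes the generator through $\begin{psmallmatrix}0&-1\\3&0\end{psmallmatrix}$ as in \eqref{generatorGamma0_3}, uses the explicit transformation of the sixth-order mock theta function $\gamma(q)$ from Gordon--McIntosh for the holomorphic part $M_h$, and then reconciles two genuinely different Mordell integral normalizations (the $J(\alpha)$ of Gordon--McIntosh versus the $J(\tfrac13;\alpha)$ of Bringmann--Ono) to handle $M_{nh}$. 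This is an analytic identity between two nonelementary integrals, not merely ``a finite computation involving Kronecker symbols''; only after that cancellation does the residual multiplier become a Kronecker-symbol matching against \eqref{KnoppFmlEta}. You also omit the paper's preliminary shadow computation ($P_{sha}(3z)$ and $M_{sha}(3z)$ both in $S_{3/2}(9,\nu_\eta^3)$, proportionality via multiplication by $\eta(3z)^7$ into a two-dimensional space), which the paper uses both to pin down $M_{sha}\in S_{3/2}(3,(\tfrac\cdot3)\nu_\eta)$ and as input to the subsequent argument that $E=P-M$ is a genuine holomorphic form. So: same route, but the hardest analytic step is left as an under-specified citation, and the shadow identification that the paper folds into this lemma is missing.
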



\begin{proof}

We begin by investigating the shadows.  {Recall that $S_k(N,\nu)$ is the space of holomorphic cusp forms on $\Gamma_0(N)$ with multiplier system $\nu$.} By combining Lemma~\ref{xiOperatorMapToCuspForms}, Theorem~\ref{FourierExpn_ofPoincareSeries} and the definition of $\xi_{\frac12}$, the shadow of $P$ satisfies
\begin{equation}\label{shadowOfP}
P_{sha}(z)\defeq \xi_{\frac12}(P(z))=\xi_{\frac12}(P_{nh}(z))\in S_{\frac32}(3,\,(\tfrac \cdot 3)\nu_\eta). 
\end{equation} 
Direct calculations using \eqref{KnoppFmlEta} yield $P_{sha}(3z)\in S_{\frac 32}(9,\nu_\eta^3)$.  

On the other hand, since $\xi_{\frac12}$ maps $M(z)$ to the shadow of $q^{-\frac1{24}}\mR(e^{\frac{2\pi i}3};q)$, we see that $\xi_{\frac12}(M(z))$ is proportional to
\[M_{sha}(z)\defeq\theta(z;1,12)+\theta(z;5,12)=\sum_{n=1}^\infty \chi_{-36}(n)nq^{\frac{n^2}{24}},  \]
where $\chi_{-36}$ is the Dirichlet character modulo 12 induced by $(\frac{-4}\cdot)$.
One can check that $M_{sha}(3z)=\eta(z)^3-(\eta^3|U_3V_3)(z)$ where for $f=\sum_{n=1}^\infty  a_f(n)q^{\frac n{8}}$, 
\[(f|U_3)(z)\defeq \sum_{n=1}^\infty a_f(3n)q^{\frac n{8}}=\frac13\sum_{u=0}^2f\(\frac{z+8u}3\)\text{\ \ and \ \ }(f|V_3)(z)\defeq f(3z). \]
Clearly $\eta^3\in S_{\frac32}(1,\nu_\eta^3)$. With some tedious matrix calculation, we observe
\[(\eta^3|U_3V_3)(z)=\frac13\sum_{u=0}^2\eta^3\(z+\frac{8u}3\)\in S_{\frac32}(9,\nu_\eta^3).\]
Hence $M_{sha}(3z)\in  S_{\frac 32}(9,\nu_\eta^3)$, so $P_{sha}(3z)$ and $M_{sha}(3z)$ are in the same space. 

Next we prove that $P_{sha}(3z)$ and $M_{sha}(3z)$ are proportional. One can check that
\[\eta(3z)\in S_{\frac12}(9,(\tfrac \cdot 3)\nu_\eta^3) \quad \text{and} \quad f(z)\in  S_{\frac 32}(9,\nu_\eta^3) \  \Longrightarrow\ f(z)\eta(3z)^7\in S_5(9,(\tfrac \cdot 3)).\]
Here $S_5(9,(\frac \cdot 3))$ is a two-dimensional space spanned by $q-2q^4+O(q^6)$ and $q^2+q^3+O(q^4)$. Since both $P_{sha}(3z)$ and $M_{sha}(3z)$ have Fourier expansion 
\[\sum_{n\equiv 1\Mod{24}}a_1(n)q^{\frac n8},\qquad \text{ and }\quad \eta(3z)^7=\sum_{n\equiv 7\Mod{24}} a_2(n)q^{\frac n8},\]
the Fourier expansion of $P_{sha}(3z)\eta(3z)^7$ and $M_{sha}(3z)\eta(3z)^7$ both start with $Cq+O(q^4)$ for some non-zero constant $C$ (which might be different). We get \[P_{sha}(3z)\eta(3z)^7=cM_{sha}(3z)\eta(3z)^7 \quad \Rightarrow \quad P_{sha}(z)=cM_{sha}(z) \text{\ \ and\ \ } P_{nh}(z)=cM_{nh}(z)\]
for some constant $c$. 

From \eqref{shadowOfP} we conclude that $M_{sha}(z)\in  S_{\frac 32}(3,(\frac\cdot 3){\nu_\eta})$. 
By \cite[Theorem~1.2]{BrmOno2010} we know that $M(72z)$ is a harmonic Maass form on $\Gamma_1(1728)$ and by \cite[Theorem~3.4]{BrmOno2010}, $M(z)$ is an entry of a vector-valued harmonic Maass form on $\Gamma_0(1)$. To prove Lemma~\ref{MOnGamma0_3}, it suffices to check the transformation law on $\Gamma_0(3)$ and we don't need to check for the growth rate at cusps. It is known that $\Gamma_0(3)$ can be generated by $\begin{psmallmatrix}
	1&1\\0&1
\end{psmallmatrix}$ and 
\begin{equation}\label{generatorGamma0_3}
\begin{psmallmatrix}
	-1&1\\-3&2
\end{psmallmatrix}=\begin{psmallmatrix}
	0&-1\\3&0
\end{psmallmatrix}\begin{psmallmatrix}
	1&-1\\0&1
\end{psmallmatrix}\begin{psmallmatrix}
	0&-1/3\\1&0
\end{psmallmatrix}\begin{psmallmatrix}
	1&-1\\0&1
\end{psmallmatrix}.
\end{equation} 
The transformation law of $M_h(z)=q^{-\frac 1{24}}\gamma(q)$ as a sixth-order mock theta function can be found in \cite[(4.3), (5.5), p. 122]{GordonMcIntoshSurvey}. Combining $M_h$ and $M_{nh}$ and carefully comparing the notation of Mordell integrals between \cite[p. 121]{GordonMcIntoshSurvey}
\[J(\alpha)\defeq \int_0^\infty \frac{e^{-\alpha x^2}}{\ch \alpha x} dx\]
and \cite[(2.5), Theorem~2.3, (3.2), and Lemma~3.2]{BrmOno2010}
\[J(\tfrac13;\alpha)\defeq \int_0^\infty e^{-\frac 32\alpha x^2}\;\frac{\ch \alpha x+1}{\ch (3\alpha x/2)} dx,\]
it's not hard to check that, under the transform of generators of $\Gamma_0(3)$ decomposing as \eqref{generatorGamma0_3},  
\[M(\begin{psmallmatrix}
	-1&1\\-3&2
\end{psmallmatrix}z)=e\(\tfrac 5{12}\)(2-3z)^\frac12 M(z), \quad \text{where\ }(\tfrac 23)\overline{\nu_{\eta}}\begin{psmallmatrix}
-1&1\\-3&2
\end{psmallmatrix}=e\(\tfrac 5{12}\) \]
with the help of \eqref{KnoppFmlEta}.

\end{proof}


Next we show that the principal part of $M(z)$ at the cusp 0 of $\Gamma_0(3)$ is constant. We can take the scaling matrix $\sigma_0=\begin{psmallmatrix}
	&-1\\3&
\end{psmallmatrix}$. With \cite[Theorem~2.3]{BrmOno2010} we can check the image of the holomorphic part $M_h(z)=\sin \frac\pi 3\, \mathcal{N}(\frac13;q)$ (in their notation) under the slash operator $|_{\frac12}\sigma_0$. The result $\mathcal{M}(\frac13;3z)$ has principal part 0 and the Mordell integral $\sqrt{z}J(\frac13;-6\pi iz)$ is bounded when $\im z\rightarrow \infty$. 

Since the principal part of the Poincar\'e series $P(z)$ is non-constant only at $\infty$, by \eqref{PrincipalPart}, the principal part of $E(z)\defeq P(z)-M(z)$ is constant at both cusps of $\Gamma_0(3)$. Then $P_{nh}=M_{nh}$ by \cite[Lemma~2.3]{BringmannOno2012} and $E(z)=P_h(z)-M_h(z)$ is in fact a holomorphic modular form whose Fourier coefficients are supported on $n-\frac1{24}$ for $n\geq 1$. 

According to the Serre-Stark basis theorem, the space $M_{\frac12}(576,\nu_\theta)$ consists of theta functions whose Fourier coefficients are zero except those for exponents $t\ell^2$ where $t|576$ and $\ell\in \Z$. However, $E(24z)$ is in $M_{\frac12}(576,\nu_\theta)$ and has Fourier expansion  with supported exponents of the form $24n-1$. Therefore, $E(z)=0$. 

It follows that $q^{-\frac1{24}}\mR(e^{\frac{2\pi i}3};q)=P_h(z)$ as the holomorphic part of $P_{\frac12}(\frac34,0,3;z)$ whose Fourier coefficient is shown in Theorem~\ref{FourierExpn_ofPoincareSeries}.  Note that we are in the special case $k=\frac12$ and {$\tilde m=\tilde 0=-\frac 1{24}<0$} (stated at the end of Theorem~\ref{FourierExpn_ofPoincareSeries}). This finishes the proof of Theorem~\ref{exactFmlThmMod3}.


\begin{remark}
	Denote 
	\begin{equation}\label{Nu2}
		\psi(\gamma)\defeq e(\tfrac c8)(\tfrac{-1}d)^{\frac c2+1}\overline{\nu_\eta}(\gamma)
	\end{equation}
	which is a weight $\frac12$ admissible multiplier on $\Gamma_0(2)$ as in \cite[(3.4), Lemma~3.2]{ahlgrendunn}. The shadow of $q^{-\frac1{24}}\mathcal{R}(-1;q)$ is proportional to
	\[\theta(z;1,6)=\frac{\eta(z)^5}{\eta(2z)^2}\in  S_{\frac 32}(2,\overline \psi).\]
	By \eqref{AknAsKloostermanSum2} and \cite[(3.5)]{ahlgrendunn}, we rewrite the exact formula of $A(\frac12;n)$ (originally proved in \cite{BrmOno2006ivt}) as:
	\begin{equation}\label{ExactFmlMod2_inKlstmSumCase}
	A\(\frac12;n\)=	\frac{2\pi \, e(-\frac18)}{(24n-1)^{\frac 14}}\sum_{2|c>0}\frac{S(0,n,c,\psi)}c I_{\frac12}\(\frac{\pi \sqrt{24n-1}}{6c}\).
	\end{equation}
	This can also be deduced from a similar process as our proof here using Theorem~\ref{FourierExpn_ofPoincareSeries}. 
\end{remark}

	

\section{Examples of admissible multipliers and a lower bound of the spectrum}

In the remaining part of this paper we come back to prove the theorems in Section 1. Suppose $N$ is a positive integer. In this section we are going to prove the following proposition and conclude a lower bound for the exceptional spectrum in certain cases. 
\begin{proposition}\label{Section3Prop}
	If $\nu=\chi \nu_\theta$ or $\nu=\chi \nu_\eta$ where $\chi$ is quadratic character modulo $N$, then $\nu$ and its conjugate are \it{admissible}, i.e. satisfy the requirements in Definition~\ref{Admissibility}. If $\nu$ is the multiplier for a weight $\pm\frac12$ eta-quotient, then $\nu$ satisfies the condition (1) in Definition \ref{Admissibility}.  
\end{proposition}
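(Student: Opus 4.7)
The plan is to split Proposition~\ref{Section3Prop} by multiplier type and verify the two conditions of Definition~\ref{Admissibility} separately, with different tools for each condition.

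For the level-lifting condition (1), the strategy is to use an explicit scaling $B$ together with the known transformation laws \eqref{thetaFunctionStandard}, \eqref{etaMultiplier}, \eqref{KnoppFmlEta}. The guiding fact is that $\eta(24z)$ is a weight $\tfrac12$ holomorphic modular form on $\Gamma_0(576)$ whose multiplier coincides with a quadratic twist of $\nu_\theta$; this suggests $B=24$ for $\nu_\eta$ and corresponding choices for its conjugate. For a weight $\pm\tfrac12$ eta-quotient $f(z)=\prod_j\eta(m_jz)^{r_j}$ with multiplier $\nu$, I take $B$ to be a common multiple of the $m_j$ and $24$; then $f(Bz)$ becomes a modular object on some $\Gamma_0(M)$ (with $4\mid M$) whose multiplier is of the form $(\tfrac{|D|}\cdot)\nu_\theta^{\pm1}$ for an even fundamental discriminant $D$ determined by the $m_j,r_j$. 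For $\chi\nu_\theta$ and $\chi\nu_\eta$ with $\chi$ a quadratic character mod $N$, the same scalings work after absorbing $\chi$ into the Kronecker symbol of the target multiplier. The injectivity of $f\mapsto f(Bz)$ on eigenspaces of $\Delta_k$ is automatic since $z\mapsto Bz$ is an invertible conformal map of $\HH$ under which $\Delta_k$ is equivariant, so the eigenvalue is preserved.

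For the average Weil bound condition (2), the main tool is a pointwise Weil-type estimate of the shape
\[|S(m,n,c,\nu)|\ll_\ep \sigma_0(c)\,(\tilde m\tilde n,c)^{\frac12}\,c^{\frac12+\ep}\]
for $\nu\in\{\chi\nu_\theta,\,\chi\nu_\eta\}$. Such a bound is obtained by unfolding the multiplier via the explicit formulas \eqref{KnoppFmlEta} and \eqref{thetaFunctionStandard}, writing $S(m,n,c,\nu)$ as a linear combination of finitely many classical Kloosterman sums $S(m',n',c')$ with modified arguments, and applying the classical Weil bound \eqref{Weilbound0.5} to each. Summing over $N\mid c\in[y,x]$ and using
\[\sum_{y\leq c\leq x}\frac1{\sqrt c}\ll\sqrt x-\sqrt y\qquad\text{when }x-y\gg x^{\frac23},\]
the factors $\sigma_0(c)$ and $(\tilde m\tilde n,c)^{1/2}$ get absorbed into $|\tilde m\tilde n x|^\ep$, producing the required average Weil bound. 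The hypothesis $x-y\gg x^{2/3}$ is what lets us treat the sum as a Riemann-sum approximation of $\int_y^x dt/\sqrt t$ without losing to small-scale fluctuations.

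The main obstacle will be in condition (1): bookkeeping the cocycle/phase calculation so as to identify the precise target discriminant $D$ after scaling by $B$. For $\nu_\eta$ and its twists, this requires careful manipulation of the Dedekind sums $s(d,c)$ in \eqref{etaMultiplier}, invoking reciprocity and standard quadratic-residue identities like $e(\tfrac{1-d}8)=(\tfrac2d)\ep_d$; the eta-quotient case introduces additional care because the quadratic twist character is only determined after combining the contributions from all $m_j$. For condition (2) the obstacle is mild: the Weil-type bound is classical, and the only point requiring attention is that the multiplier twist by $\chi$ must not introduce a conductor that degrades the square-root cancellation. Once these details are worked out, condition (2) follows routinely, while condition (1) is not claimed for general eta-quotients beyond the level-lifting assertion.
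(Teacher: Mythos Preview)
Your plan is correct and follows essentially the same route as the paper: level-lifting via $z\mapsto Bz$ to reach a $(\tfrac{|D|}{\cdot})\nu_\theta^{\pm1}$ multiplier, and an individual Weil-type bound summed over $c$ for condition~(2). Two minor points of comparison: the paper takes $B=24$ uniformly for all eta-quotients (not a common multiple of the $m_j$ and $24$), landing on $\Gamma_0(576LN)$ with multiplier $\prod_\delta(\tfrac{12\delta}{\cdot})^{r_\delta}\nu_\theta$, and for $\chi\nu_\theta$ it uses the identity map ($B=1$), simply adjusting the discriminant from $D$ to $-4D$ if $D\equiv1\pmod4$ to force evenness; for condition~(2) in the $\chi\nu_\eta$ case the paper reduces to $\nu_\eta$-Kloosterman sums (via character orthogonality on $\chi$) rather than all the way to classical ones, then invokes the known Weil bound for those.
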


We will verify this proposition in the next subsection while we only prove the case for weight $\frac12$. The proof for weight $-\frac12$ case with respect to the conjugate of each multiplier follows from the same process. For simplicity we recall Dirichlet's lemma:

\begin{lemma}\label{DirichletLemma}
	Every real character $\chi$ modulo $N$ can be expressed in the form
	$\chi(y)=(\tfrac dy)$
	where $d\equiv 0,1\Mod 4$ depends on $\chi$ and $N$. Every primitive real character can be expressed in the form 
	\[\chi(y)=(\tfrac Dy),\]
	where $D$ is a fundamental discriminant and $|D|$ equals the conductor of $\chi$. 
\end{lemma}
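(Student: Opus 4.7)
The plan is to reduce to the classification of primitive real Dirichlet characters modulo prime powers and then glue via the Chinese Remainder Theorem. A real character is one of order dividing $2$, i.e.\ a character of the quotient $(\Z/N\Z)^{\times}/\big((\Z/N\Z)^{\times}\big)^{2}$, so it suffices to determine this elementary abelian $2$-group component by component at each prime power $p^{a}\| N$.

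First I would handle the odd-prime case. For $p$ odd and $a\geq 1$, $(\Z/p^{a}\Z)^{\times}$ is cyclic of order $p^{a-1}(p-1)$ and has a unique non-trivial real character, which is the Legendre symbol $\bigl(\tfrac{\cdot}{p}\bigr)$ pulled back to $\Z/p^{a}\Z$; its conductor is $p$ and it can be written as $\bigl(\tfrac{p^{*}}{\cdot}\bigr)$ with $p^{*}=(-1)^{(p-1)/2}p\equiv 1\Mod 4$, a fundamental discriminant. For $p=2$ one uses the decomposition $(\Z/2^{a}\Z)^{\times}\cong \langle -1\rangle\times\langle 5\rangle$ for $a\geq 3$ to list the real characters: trivial, $\chi_{-4}$ (conductor $4$, corresponding to $D=-4$), $\chi_{8}$ and $\chi_{-8}$ (conductor $8$, corresponding to $D=\pm 8$), and their extensions to larger $2^{a}$. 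Each of these is expressible as a Kronecker symbol $\bigl(\tfrac{D}{\cdot}\bigr)$ with $D\in\{-4,8,-8\}$, all fundamental discriminants.

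Next I would assemble the global statement. By CRT a real character $\chi\Mod N$ factors as a product of real characters at each prime power dividing $N$; the previous paragraph writes each local factor as a Kronecker symbol $\bigl(\tfrac{D_{p}}{\cdot}\bigr)$ supported where $\gcd(\cdot,p)=1$. Using multiplicativity of the Kronecker symbol in its upper entry and the fact that a product of fundamental discriminants of pairwise coprime moduli is again a discriminant (an integer $\equiv 0,1\Mod 4$), the product becomes $\bigl(\tfrac{d}{\cdot}\bigr)$ for some $d\equiv 0,1\Mod 4$, proving the first assertion. For the primitive case one keeps only those local factors that are non-trivial; the resulting $D$ is then a \emph{fundamental} discriminant whose absolute value equals the product of the local conductors, i.e.\ the conductor of $\chi$.

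The only real obstacle is the $p=2$ bookkeeping, where one must carefully enumerate which real characters occur at each level $2^{a}$ and verify the identification with the Kronecker symbol $\bigl(\tfrac{D}{d}\bigr)$ for odd $d$ via the explicit formulas $\bigl(\tfrac{-4}{d}\bigr)=(-1)^{(d-1)/2}$, $\bigl(\tfrac{8}{d}\bigr)=(-1)^{(d^{2}-1)/8}$, and their product. Since this lemma is classical (see, e.g., Davenport's \emph{Multiplicative Number Theory}, Chapter 5), I would keep the exposition brief and cite the standard reference rather than grinding through the $2$-adic case in detail.
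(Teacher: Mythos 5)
Your proposal is correct, but note that the paper offers \emph{no} proof of this lemma: it is introduced with the words ``we recall Dirichlet's lemma'' and treated as a known classical fact (it indeed appears, for instance, as Lemma~5 in Chapter~5 of Davenport's \emph{Multiplicative Number Theory}). Your CRT-plus-local-classification argument is the standard route and is sound: real characters are exactly the order-$\le 2$ characters of $(\Z/N\Z)^\times$; the odd local factors are cyclic, each contributing at most one nontrivial real character, namely $\bigl(\tfrac{\cdot}{p}\bigr)$, which quadratic reciprocity rewrites as $\bigl(\tfrac{p^*}{\cdot}\bigr)$ with $p^*=(-1)^{(p-1)/2}p\equiv 1\Mod 4$; the $2$-part contributes at most $\chi_{-4},\chi_{\pm 8}$; and a product of pairwise coprime fundamental discriminants is again a fundamental discriminant whose associated Kronecker symbol is the product of the local ones. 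One small streamlining: the first assertion follows formally from the second, since any real character mod~$N$ is induced from a unique primitive real character of conductor $f\mid N$, and its fundamental discriminant $D$ already satisfies $D\equiv 0,1\Mod 4$, so you may take $d=D$ directly rather than re-gluing from scratch for the non-primitive case.
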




\subsection{Proof of Proposition~\ref{Section3Prop}} Suppose $\chi$ is a quadratic character modulo $N$. If $\nu=\chi\nu_\theta$, write $\chi=(\frac{D}\cdot)\mathbf{1}_{N/D}$ where $D$ is a fundamental discriminant. Since $\nu$ is assumed to be a weight $\frac12$ multiplier, we have $\chi(-1)=1$ so $D>0$. If $D\equiv 0\Mod 4$, we are done; if $D\equiv 1\Mod 4$, then $-4D$ is fundamental and $\nu$ equals $(\frac{|-4D|}\cdot)\nu_\theta$ on $\Gamma_0(N)$. Now we have proved condition (1) for $\nu=\chi\nu_\theta$.  

The individual Weil bound is known by Blomer \cite[(2.15)]{BlomerSumofHeckeEvaluesOverQP} that for $4|N|c$, we have 
\[|S(m,n,c,\nu)|\leq \sigma_0(c)(m,n,c)^{\frac 12}c^{\frac 12}, \]
where $\nu=\psi\nu_\theta$ or $\overline{\psi\nu_\theta}$ for a Dirichlet character $\psi$ modulo $N$ satisfying $\psi(-1)=1$. This proves condition (2) for $\nu=\chi\nu_\theta$ {by $\sigma_0(c)\ll_\ep c^\ep$ and}
\begin{align*}
		{\sum_{N|c\in [y,x]}\frac{S(m,n,c,\nu)}c}
		& \ll_\ep x^{\ep}\sum_{N|c\in [y,x]}(m,n,c)^{\frac 12}c^{-\frac 12}\\
		&\ll_\ep x^{\ep} \sum_{1\leq d\leq (m,n)}d^{\frac 12}\sum_{\substack{c'\in[\frac yd,\frac xd]\\(c=c'd)}} c'^{-\frac 12} d^{-\frac 12}\\
		&\ll_\ep x^{\ep} (\sqrt x-\sqrt y)\sigma_{-\frac 12}((m,n)). 
\end{align*}

For $\nu=\chi \nu_\eta$, we have a map to $\tilde{\mathcal{L}}_\frac12 \(576N,(\tfrac{12}\cdot)\chi\nu_\theta\)$:
\begin{lemma}\label{LevelLiftzAz} For $\nu=\chi\nu_\eta$ 
and for each $r$, the map $z \rightarrow 24z$ gives an injection
	\[\tilde{\mathcal{L}}_\frac12 (N,\nu,r)\rightarrow \tilde{\mathcal{L}}_\frac12 \(576N,(\tfrac{12}\cdot)\chi\nu_\theta,r\). \]                                                                              
\end{lemma}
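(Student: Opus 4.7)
The plan is to realize $z\mapsto 24z$ as the weight-$\tfrac12$ slash action of the matrix $\sigma\defeq\begin{psmallmatrix}\sqrt{24} & 0\\ 0 & 1/\sqrt{24}\end{psmallmatrix}\in\SL_2(\R)$, and then verify the three defining properties of $\tilde{\mathcal{L}}_{1/2}\bigl(576N,(\tfrac{12}\cdot)\chi\nu_\theta,r\bigr)$: the correct transformation law, the same Laplace eigenvalue, and square integrability.

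Given $f\in\tilde{\mathcal{L}}_{1/2}(N,\chi\nu_\eta,r)$, I set $g\defeq f|_{1/2}\sigma$. Since $\sigma$ is upper triangular with positive diagonal, $j(\sigma,z)=1$, and so $g(z)=f(24z)$. For $\gamma=\begin{psmallmatrix}a&b\\c&d\end{psmallmatrix}\in\Gamma_0(576N)$ a direct computation gives
\[\sigma\gamma\sigma^{-1}=\begin{pmatrix}a & 24b\\ c/24 & d\end{pmatrix},\]
which lies in $\Gamma_0(N)$ because $576N\mid c$ forces $24N\mid c/24$. The automorphy factor satisfies $j(\sigma\gamma,z)=j(\gamma,z)=j(\sigma\gamma\sigma^{-1},\sigma z)$, so the slash operator is multiplicative across these factorizations and
\[g\bigm|_{1/2}\gamma \,=\, f\bigm|_{1/2}(\sigma\gamma) \,=\, \bigl(f\bigm|_{1/2}(\sigma\gamma\sigma^{-1})\bigr)\bigm|_{1/2}\sigma \,=\, \chi(d)\,\nu_\eta(\sigma\gamma\sigma^{-1})\,g.\]
The required transformation $g|_{1/2}\gamma=(\tfrac{12}{d})\chi(d)\nu_\theta(\gamma)\,g$ therefore reduces to the multiplier identity
\[\nu_\eta(\sigma\gamma\sigma^{-1}) \,=\, \Big(\tfrac{12}{d}\Big)\,\nu_\theta(\gamma), \qquad \gamma\in\Gamma_0(576N).\]

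To prove this, I will substitute the entries of $\sigma\gamma\sigma^{-1}$ into the Knopp formula \eqref{KnoppFmlEta} for $\nu_\eta$ and compare with $\nu_\theta(\gamma)=(\tfrac{c}{d})\varepsilon_d^{-1}$. Because $576\mid c$, the pieces $(a+d)(c/24)$, $bd((c/24)^2-1)$, and $3d-3-3(c/24)d$ in the exponential factor of \eqref{KnoppFmlEta} all reduce modulo $24$ to something controllable; after absorbing the $\varepsilon_d$-type contribution using $e(\tfrac{1-d}{8})=(\tfrac{2}{d})\varepsilon_d$ for odd $d$, what survives is the Jacobi symbol $(\tfrac{c/24}{d})$. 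Quadratic reciprocity together with the supplementary laws for $(\tfrac{2}{d})$ and $(\tfrac{-1}{d})$ then converts $(\tfrac{c/24}{d})$ into $(\tfrac{12}{d})(\tfrac{c}{d})$, which yields the identity.

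Eigenvalue preservation is automatic because $\Delta_{1/2}$ commutes with $|_{1/2}\sigma$ for every $\sigma\in\SL_2(\R)$, giving $\Delta_{1/2}g=-\lambda g$ with the same $\lambda=\tfrac14+r^2$. Square integrability follows from the $\SL_2(\R)$-invariance of $dx\,dy/y^2$ and the finite-index inclusion $\sigma^{-1}\Gamma_0(576N)\sigma\subseteq\Gamma_0(N)$; injectivity is immediate since $g(z)=f(24z)\equiv 0$ forces $f\equiv 0$. The main obstacle is the multiplier identity in the previous paragraph: \eqref{KnoppFmlEta} branches on the parity of the lower-left entry, and tracking the $\varepsilon_d$ factors, the $24$-th roots of unity, and the reciprocity signs after the replacement $c\rightsquigarrow c/24$ is where essentially all of the work lies. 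The choice $B=24$, $M=576=24^2$ is precisely what makes these reciprocity calculations close up and produce the character $(\tfrac{12}{\cdot})$ on the right-hand side.
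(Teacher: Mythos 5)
Your proposal is correct and follows essentially the same route as the paper: both proofs realize $z\mapsto 24z$ as slashing by $\begin{psmallmatrix}\sqrt{24}&0\\0&1/\sqrt{24}\end{psmallmatrix}$, conjugate $\gamma\in\Gamma_0(576N)$ to $\begin{psmallmatrix}a&24b\\c/24&d\end{psmallmatrix}\in\Gamma_0(N)$, and reduce the statement to the multiplier identity $(\chi\nu_\eta)\begin{psmallmatrix}a&24b\\c/24&d\end{psmallmatrix}=\chi(d)(\tfrac{12}{d})\nu_\theta(\gamma)$, which both verify by substituting into \eqref{KnoppFmlEta} together with $e(\tfrac{1-d}{8})=(\tfrac{2}{d})\varepsilon_d$. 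Your explicit checks of eigenvalue preservation, square integrability, and injectivity are left implicit in the paper but add nothing new to the argument's core.
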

\begin{proof}
	The proof follows from a similar process as \cite[Lemma~3.2]{ahlgrendunn} by setting $\gamma'=\begin{psmallmatrix}
		a &24b\\c/24 & d
	\end{psmallmatrix}$ when $\gamma=\begin{psmallmatrix}
	a &b\\c & d
	\end{psmallmatrix}\in\Gamma_0(576N)$, $c>0$. For any $f\in \tilde{\mathcal{L}}_\frac12 (N,\nu,r)$, define \[g(z)\defeq f(24z)=f|_{\frac12}\(\begin{psmallmatrix}
	\sqrt{24}&0\\0&1/\sqrt{24}
	\end{psmallmatrix}\).\]
	Observe that
	\[g|_{\frac12}\gamma=f|_{\frac12}\gamma'\begin{psmallmatrix}
	\sqrt{24}&0\\0&1/\sqrt{24}
\end{psmallmatrix}. \]
One can check that $(\chi\nu_\eta)(\gamma')=\chi(d)(\frac{12}d)\nu_\theta(\gamma)$ by \eqref{KnoppFmlEta} with the help of the identities $e(\frac{1-d}8)=(\frac2d)\ep_d$ and $\ep_d^2=(\frac{-1}d)$ for odd $d$. 
\end{proof}
Now $(\tfrac{12}\cdot)\chi\nu_\theta$ is a weight $\frac12$ multiplier on $\Gamma_0(M)=\Gamma_0(576N)$ and $\chi(-1)=1$. As in the beginning of this subsection, $(\tfrac{12}\cdot)\chi\nu_\theta$ equals $(\frac{D'}\cdot)\nu_\theta$ on $\Gamma_0(M)$ for some $D'$ fundamental. Finally we pick $(\frac{D'}\cdot)\nu_\theta$ or $(\frac{|-4D'|}\cdot)\nu_\theta$ {as we discussed at the beginning of this subsection}. 

Although we only need an average bound, we have an individual Weil bound for $\nu=\chi\nu_\eta$. {The proof that $\nu$ satisfies condition (2) is then similar as the proof for $\psi\nu_\theta$ above. }

\begin{lemma}\label{Weilbd} Suppose that $\nu=\psi_q \nu_\eta$ where $\psi_q$ is a Dirichlet character modulo $q$ with $q|N|c$. Write $24m-23=\alpha^2 M_1$ with $M_1$ square-free. Then we have
	\[|S(m,n,c,\nu)|\ll q^\frac32 \sigma_0((\alpha,c))\sigma_0(c)\sqrt c \cdot\((24m-23)(24n-23),c\)^{\frac12}.\]	
\end{lemma}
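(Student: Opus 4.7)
The plan is to make the eta-multiplier explicit via Knopp's formula \eqref{KnoppFmlEta}, rewrite $S(m,n,c,\psi_q\nu_\eta)$ as a classical character-twisted Kloosterman sum of modulus $24c$ (with a character of conductor dividing $24qc$), and invoke the Weil bound for such sums. The square factorization $24m - 23 = \alpha^2 M_1$ is used at the end to extract the divisor $(\alpha,c)$ from the Kloosterman modulus, while the twist by $\psi_q$ is the source of the $q^{3/2}$ in the final bound.

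Concretely, I would substitute Knopp's formula into $\overline{\nu_\eta(\gamma)}$ and combine with the phase
\[
e\!\left(\tfrac{\tilde m a + \tilde n d}{c}\right) = e\!\left(\tfrac{(24m-23)a + (24n-23)d}{24c}\right),
\]
since $24\tilde m = 24m-23$ and $24\tilde n = 24n-23$. Using $a \equiv \bar d \pmod c$ and expanding $b = (ad-1)/c$, the combined exponent—after bookkeeping of the $(a+d)c/24$ and $bd(c^2-1)/24$ phases together with the auxiliary $-3c/24$ or $(3d-3-3cd)/24$ term depending on the parity of $c$—simplifies to $(A\bar d + Bd)/(24c) \bmod 1$ for integers $A,B$ congruent to $24m-23$ and $24n-23$ modulo $c$. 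Grouping $\overline{\psi_q}(d)$ with the Jacobi symbol $(\tfrac{d}{c})$ (or $(\tfrac{c}{d})$ in the even case) presents the sum in the form
\[
\sum_{d\bmod 24c,\; (d,\,24c)=1}\chi(d)\, e\!\left(\tfrac{A\bar d + Bd}{24c}\right),
\]
where $\chi$ is a Dirichlet character whose conductor divides $24qc$.

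I would then apply the Weil bound for character-twisted Kloosterman sums: the intrinsic Kloosterman piece contributes $\ll (A,B,c)^{1/2}\sigma_0(c)\sqrt{c}$, while the character twist costs a further $q^{3/2}$ (one factor of $q$ from absorbing the modulus of $\psi_q$ into the combined character, plus an additional $\sqrt{q}$ from inflating the conductor when applying the Weil bound in its twisted form). Finally, the factorization $24m-23 = \alpha^2 M_1$ with $M_1$ square-free lets one separate the square part of $A$: via a standard divisor-sum change of variables, the Kloosterman sum modulo $c$ decomposes into at most $\sigma_0((\alpha,c))$ subsums, each satisfying the remaining gcd estimate $((24m-23)(24n-23),c)^{1/2}$. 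Collecting these contributions produces the stated bound.

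The main obstacle is the bookkeeping in the first step: Knopp's formula splits into odd and even cases for $c$, and the auxiliary phases with denominator $24$ must be carefully reconciled—using in particular the identity $e((1-d)/8) = (\tfrac{2}{d})\varepsilon_d$ for odd $d$—to present the inner sum as a bona fide character-twisted Kloosterman sum with clean modulus $24c$ and clean integer parameters $A,B$. A secondary subtlety is justifying the exponent $3/2$ on $q$ rather than the naive $1/2$; this reflects the genuine cost of absorbing and completing a non-primitive character of modulus $q$ into the Weil bound, and is precisely where the present estimate is more expensive than Blomer's individual Weil bound for $\chi\nu_\theta$ recalled earlier in this section.
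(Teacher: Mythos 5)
Your approach is genuinely different from the paper's, and it contains a real gap at the crucial last step.

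The paper does not go through Knopp's formula or any direct Weil-bound computation for twisted sums. Instead it writes $\overline{\psi_q}(d)$ as a finite Fourier sum of additive characters modulo $q$,
\[
\overline{\psi_q}(d)=\sum_{\ell=1}^q\frac{\overline{\psi_q}(\ell)}{q}\sum_{h=1}^q e\!\left(\frac{h(d-\ell)}{q}\right),
\]
so that $S(m,n,c,\psi_q\nu_\eta)$ becomes a sum of $q^2$ copies (with weight $1/q$) of eta-Kloosterman sums $S(m,\,\cdot\,,c,\nu_\eta)$ with a shifted second index; it then quotes the already-established Weil bound for $S(m,n,c,\nu_\eta)$ from \cite[Proposition~2.1]{AAimrn}, and finishes with the crude estimate $(M_1N_2,c)\leq q(M_1N_1,c)$. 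The whole $q^{3/2}$ loss is accounted for there: one $q$ from the $q^2\cdot(1/q)$ count and one $q^{1/2}$ from the gcd estimate. This reduction is short precisely because all the hard arithmetic is outsourced to the known untwisted eta bound.

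Your route, by contrast, would re-derive that hard arithmetic directly in the twisted setting: expand $\nu_\eta$ via \eqref{KnoppFmlEta}, recast the sum as a character-twisted classical Kloosterman sum modulo $24c$, and apply a twisted Weil bound. That plan is plausible for producing a bound of the general shape $\sigma_0(\cdot)\sqrt{c}\,(A,B,\cdot)^{1/2}$ with some conductor loss, but two ingredients are asserted without support and are, in fact, the technical heart of the lemma. First, the factor $\sigma_0((\alpha,c))$: this is \emph{not} produced by ``a standard divisor-sum change of variables'' applied to a twisted classical Kloosterman sum. In \cite[Proposition~2.1]{AAimrn} it arises from a Sali\'e-type/Gauss-sum evaluation tied to solving a quadratic congruence $x^2\equiv -(24m-23)\pmod{24c}$, and the divisors of $(\alpha,c)$ index a parametrization of those square roots. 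That analysis depends on the specific structure of the eta-multiplier and has no classical-Kloosterman analogue. Second, the gcd shape $((24m-23)(24n-23),c)^{1/2}$: the Weil bound (twisted or not) on a classical-looking sum $\sum_d\chi(d)\,e((A\bar d+Bd)/c')$ yields a \emph{joint} gcd $(A,B,\cdot)^{1/2}$, whereas the eta-Kloosterman bound is a \emph{product} gcd; obtaining one from the other is not automatic and requires the same local/CRT analysis you are trying to avoid. Your accounting of $q^{3/2}$ (``one $q$ from absorbing the modulus, one $\sqrt q$ from the conductor'') is also not substantiated --- in the twisted Weil bound the natural conductor loss is $\sqrt{q^*}$, not $q^{3/2}$, and the extra power of $q$ you need has no visible source in your setup. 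If you want to pursue your route, you would essentially have to reprove a twisted analogue of \cite[Proposition~2.1]{AAimrn} from scratch; the paper's character-expansion trick is precisely the device that avoids this.
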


\begin{proof}
	Set $r=N/q$ and $s=c/N$. By \eqref{AlphaDefine} we have $\alpha_{\nu}=\alpha_{\nu_{\eta}}$ so $\tilde{n}=n-\frac{23}{24}$. 
	We have
	\begin{align*}
		S(m,n,c,\nu)&=\sum_{d\Mod c^*} \overline\psi_q(d)\overline{\nu_\eta}(\gamma) e\(\frac{\tilde{m}a+\tilde{n}d}c\)\\
		&=\sum_{d\Mod c^*} \(\sum_{\ell=1}^q \frac{\overline\psi_q(\ell)}{q}\sum_{h=1}^q e\(\frac{h(d-\ell)}q\) \)\overline{\nu_\eta}(\gamma) e\(\frac{\tilde{m}a+\tilde{n}d}c\)\\
		&=\frac 1q\sum_{\ell=1}^q \overline\psi_q(\ell)\sum_{h=1}^q e\(-\frac{h\ell}q\) \sum_{d\Mod c^*} \overline{\nu_\eta}(\gamma) e\(\frac{\tilde{m}a+(\tilde{n}+hrs)d}c\).
	\end{align*}
The proof now follows from the Weil-type bound for $S(m,n,c,\nu_\eta)$. 	By \cite[Proposition~2.1]{AAimrn} we see that
	\[|S(m,n,c,\nu)|\ll q\sigma_0((\alpha,c))\sigma_0(c)\sqrt c \cdot \max_{1\leq h\leq q}(M_1N_2,c)^{\frac12},\]
	where $N_1=24n-23$ and $N_2=24(n+hrs)-23=24n-23+\frac{24hc}q$. We finish the proof by a rough estimate
	\[(M_1N_2,c)\leq q(M_1N_2,\tfrac cq)=q(M_1N_1,\tfrac cq)\leq q(M_1N_1,c).\]
\end{proof}


It remains to prove the claim for eta-quotients in Proposition~\ref{Section3Prop}. 
\begin{lemma}\label{LevelLiftzAz2}
	If $\nu$ is the multiplier system for an eta-quotient
	\[f(z)=\prod_{\delta|L}\eta(\delta z)^{r_\delta}\]
	of weight $\frac12=\frac12\sum_{\delta|L}r_\delta$, then the map $z \rightarrow 24z$ gives an injection
	\[\tilde{\mathcal{L}}_\frac12 (N,\nu,r)\rightarrow \tilde{\mathcal{L}}_\frac12 \(576LN,\;\prod_{\delta|L}\(\frac{12\delta}\cdot\)^{r_\delta}\nu_\theta,\;r\).\]
\end{lemma}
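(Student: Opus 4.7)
The plan is to mimic the proof of Lemma~\ref{LevelLiftzAz}, but handle each eta-factor separately and then bundle the results via the weight condition $\sum_{\delta\mid L}r_\delta=1$. Define $g(z):=f(24z)=\prod_{\delta\mid L}\eta(24\delta z)^{r_\delta}$. For $\gamma=\begin{psmallmatrix}a&b\\c&d\end{psmallmatrix}\in\Gamma_0(576LN)$ and each $\delta\mid L$, introduce
\[
\gamma_\delta:=\begin{pmatrix}a & 24\delta b\\ c/(24\delta) & d\end{pmatrix}.
\]
Since $\delta\mid L$, $24\delta\mid 24L\mid 576L\mid c$, so $\gamma_\delta\in\SL_2(\Z)$. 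A direct computation gives $24\delta\gamma z=\gamma_\delta(24\delta z)$, hence
\[
\eta(24\delta\gamma z)=\nu_\eta(\gamma_\delta)(cz+d)^{1/2}\,\eta(24\delta z).
\]
Raising to the integer power $r_\delta$ (with a consistent branch) and multiplying over $\delta\mid L$, the weight $\tfrac12$ condition yields
\[
g(\gamma z)=\biggl(\prod_{\delta\mid L}\nu_\eta(\gamma_\delta)^{r_\delta}\biggr)(cz+d)^{1/2}\,g(z).
\]

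The key remaining step is the multiplier identity
\[
\nu_\eta(\gamma_\delta)=\Bigl(\tfrac{12\delta}{d}\Bigr)\nu_\theta(\gamma)\qquad\text{for each }\delta\mid L;
\]
granting this and using $\sum r_\delta=1$, we obtain the claimed multiplier $\prod_{\delta\mid L}(\tfrac{12\delta}{\cdot})^{r_\delta}\nu_\theta$ of $g$ on $\Gamma_0(576LN)$. To prove this identity, note that because $576\mid c$ we have $c/(24\delta)=24(LN/\delta)\cdot c/(576LN)$, which is even, so the even case of Knopp's formula \eqref{KnoppFmlEta} applies to $\gamma_\delta$. Comparing with $\nu_\theta(\gamma)=(\tfrac{c}{d})\varepsilon_d^{-1}$ and using $e(\tfrac{1-d}{8})=(\tfrac{2}{d})\varepsilon_d$ (for odd $d$), together with Kronecker-symbol multiplicativity and the reductions $c\equiv 0\pmod{576L}$, the $24$-th roots of unity pair up exactly as in the $\delta=1$ case of Lemma~\ref{LevelLiftzAz}, with the additional factor $(\tfrac{\delta}{d})$ emerging from the replacement $b'=24\delta b$ and $c'=c/(24\delta)$ in Knopp's formula.

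Finally, that $g$ lies in $\tilde{\mathcal{L}}_{1/2}$ with the same spectral parameter $r$ follows from two routine observations: the dilation $z\mapsto 24z$ preserves the hyperbolic measure $dx\,dy/y^2$, giving $L^2$ boundedness and injectivity; and the chain rule shows $\Delta_k(f(24z))=(\Delta_k f)(24z)$, so the eigenvalue is preserved. The main technical obstacle is the $\delta$-dependent multiplier identity in the middle paragraph: unwinding Knopp's formula for $\gamma_\delta$ and checking that the exponential factors combine with $\nu_\theta(\gamma)$ to produce exactly the Kronecker symbol $(\tfrac{12\delta}{d})$ requires careful modulo-$24$ bookkeeping, essentially generalizing the $\delta=1$ calculation already carried out in Lemma~\ref{LevelLiftzAz}.
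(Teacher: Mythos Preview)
Your approach is essentially the same as the paper's: factor the multiplier over the eta-factors, conjugate each factor by the dilation to obtain the matrix $\gamma_\delta=\begin{psmallmatrix}a & 24\delta b\\ c/(24\delta) & d\end{psmallmatrix}$, and reduce to the identity $\nu_\eta(\gamma_\delta)=\bigl(\tfrac{12\delta}{d}\bigr)\nu_\theta(\gamma)$. Two small points. First, there is a notational slip: you introduce $f$ as the specific eta-quotient and set $g(z)=f(24z)$, but in the final paragraph you treat $f$ as a general Maass eigenform; the multiplier calculation via the eta-quotient does determine $\nu(\gamma')$ for $\gamma'=\begin{psmallmatrix}a&24b\\c/24&d\end{psmallmatrix}$, and that is exactly what is needed for an arbitrary $V\in\tilde{\mathcal L}_{1/2}(N,\nu,r)$, but you should say so explicitly. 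Second, you leave the key identity as ``careful modulo-$24$ bookkeeping''; the paper carries this out: since $576L\mid c$ one has $24\mid c/(24\delta)$, so in Knopp's even-$c$ formula every term in the exponent except $3d-3$ vanishes modulo $24$, giving $\nu_\eta(\gamma_\delta)=\bigl(\tfrac{c/(24\delta)}{d}\bigr)e\bigl(\tfrac{d-1}{8}\bigr)$, and then $e(\tfrac{1-d}{8})=(\tfrac{2}{d})\varepsilon_d$ together with Kronecker multiplicativity yields $\bigl(\tfrac{12\delta}{d}\bigr)\nu_\theta(\gamma)$.
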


\begin{proof}
	The proof is similar to Lemma~\ref{LevelLiftzAz}. Let $\nu_\delta$ denote the multiplier for a factor $\eta(\delta z)$. Since for $\begin{psmallmatrix}
		a&b\\c&d
	\end{psmallmatrix}\in\Gamma_0(576\delta)$,
	\[\begin{psmallmatrix}
		24\delta&0\\0&1
	\end{psmallmatrix}\begin{psmallmatrix}
	a&b\\c&d
\end{psmallmatrix}=\begin{psmallmatrix}
a&24\delta b\\c/24\delta &d
\end{psmallmatrix}\begin{psmallmatrix}
		24\delta&0\\0&1
	\end{psmallmatrix}, \]
we have
	\begin{align*}
	\nu_\delta\(\begin{psmallmatrix}
		a&24b\\c/24&d
	\end{psmallmatrix}\)&=\nu_\eta\(\begin{psmallmatrix}
		a&24\delta b\\c/24\delta&d
	\end{psmallmatrix}\)=(\tfrac{c/24\delta}{d})e(\tfrac{d-1}8)=(\tfrac \delta d)(\tfrac{12}d)\nu_\theta\(\begin{psmallmatrix}
	a&b\\c&d
\end{psmallmatrix}\)\end{align*}
	because $e(\frac{1-d}8)=(\frac 2d)\ep_d$ when $d$ is odd. We take the product of all the factors. 
\end{proof}

\begin{remark}
For the multiplier of a eta-quotient, the author doesn't know its Weil bound in general. 
\end{remark}

\subsection{Lower bound for the exceptional spectrum}
After we get a twisted theta-multiplier by level lifting, the following theorems show the relationship of eigenvalues between weight 0 and weight $\frac12$ eigenforms.

\begin{theorem}[{\cite[p. 304]{sarnakAdditive}}] \label{speclift}
Let $\chi$ be a Dirichlet character modulo $4N'$ for a positive integer $N'$, $\nu=\nu_\theta\(\frac{N'}{\cdot}\)\overline\chi$, then for each eigenvalue $\lambda=\frac14+r^2$ of $\Delta_{\frac12}$ for $(\Gamma_0(4N'),\nu)$, there is an eigenvalue $\lambda'=\frac14+4r^2$ of $\Delta_0$ for $(\Gamma_0(2N'),\chi^2)$.
\end{theorem}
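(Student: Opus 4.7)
The result is the Maass-form analogue of Shimura's correspondence between half-integral and integer weight forms; I would establish it via a theta lift in the style of Katok--Sarnak, which adapts Shintani's and Niwa's constructions from the holomorphic setting to real-analytic eigenforms. The doubling $r\mapsto 2r$ of the spectral parameter --- equivalently, $\lambda\mapsto 4\lambda-\tfrac34$ --- is precisely the shift produced by such a quadratic lift, and this structural observation is what makes a theta-kernel construction the natural route.

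The first step is to construct a two-variable theta kernel $\Theta(z,w)$ on $\HH\times\HH$, attached to an indefinite ternary quadratic form whose discriminant is controlled by $N'$, with three properties: in the variable $z$ it transforms as a weight $\tfrac12$ automorphic form on $(\Gamma_0(4N'),\nu)$; in the variable $w$ it transforms as a weight $0$ automorphic form on $(\Gamma_0(2N'),\chi^2)$; and it intertwines the two Laplacians via an identity of the shape $\Delta_{1/2,z}\Theta=4\Delta_{0,w}\Theta+c\,\Theta$ for an explicit constant $c$ coming from the weight factor. The first two properties are verified using the Weil representation of the metaplectic group together with Poisson summation over the lattice; the third is a direct computation on the Gaussian factor in $\Theta$, reflecting the fact that the quadratic space has dimension three versus the two-dimensional upper half plane in $w$.

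The second step is to form the lift
\[F(w)\defeq \int_{\Gamma_0(4N')\backslash \HH} f(z)\,\overline{\Theta(z,w)}\,\frac{dx\,dy}{y^2}\]
for $f\in\LEigenform_{1/2}(4N',\nu,r)$, regularized in the standard way since $\Theta(\cdot,w)$ is generally not $L^2$. Transformation in $w$ places $F\in\mathcal{A}_0(\Gamma_0(2N'),\chi^2)$; then the intertwining identity together with the self-adjointness of $\Delta_{1/2}$ gives $-\Delta_{0,w}F=(\tfrac14+4r^2)F$, exhibiting $\lambda'=\tfrac14+4r^2$ as an eigenvalue on the weight zero side.

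The main obstacle is two-fold. First, one must rule out $F\equiv 0$; this is handled by a Waldspurger/Kohnen--Zagier type identity that expresses Fourier coefficients of $F$ at squarefree indices as quadratic expressions in the coefficients $\rho(n)$ of $f$, so that non-vanishing of $f$ forces non-vanishing of the lift on at least one eigenspace of a given eigenvalue. Second, the multiplier system must be tracked carefully: verifying that the precise combination $\nu=\nu_\theta(\tfrac{N'}{\cdot})\overline\chi$ matches the transformation law of $\Theta$ in $z$ and produces exactly $\chi^2$ on $\Gamma_0(2N')$ --- rather than a larger level or a different Nebentypus --- uses the quadratic-reciprocity-type identities satisfied by $\nu_\theta$, and is the local source of the level drop from $4N'$ to $2N'$. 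This is the most delicate bookkeeping in the argument; one could alternatively invoke the Katok--Sarnak formulas directly and specialize to the present level/character combination to bypass the explicit verification.
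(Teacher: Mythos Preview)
The paper does not give its own proof of this statement: it is stated with a direct citation to Sarnak's paper \cite[p.~304]{sarnakAdditive} and used as a black box in the subsequent Proposition~\ref{specParaBoundWeightHalf}. So there is no ``paper's proof'' to compare against.

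That said, your proof plan is the correct one and is essentially the argument behind the cited result. The theta-kernel lift from weight $\tfrac12$ to weight $0$ Maass forms (in the style of Shintani--Niwa, made explicit for Maass forms by Katok--Sarnak) is exactly the mechanism that produces the spectral doubling $r\mapsto 2r$, and the level/Nebentypus bookkeeping you flag is indeed where the specific combination $\nu=\nu_\theta(\tfrac{N'}{\cdot})\overline\chi$ and the drop from $4N'$ to $2N'$ enter. Your remark that the non-vanishing of the lift is the genuine obstacle is also accurate; in practice one appeals to the Kohnen--Zagier/Waldspurger-type coefficient formulas or, as you note, specializes the Katok--Sarnak identities directly. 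For the purposes of this paper, however, none of that machinery needs to be reproduced --- only the eigenvalue correspondence is used, and it is legitimately imported from the literature.
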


Recall the definition \eqref{FirstSpecPara} of $r_\Delta(N,\nu,k)$. We have the following bound:

\begin{proposition}\label{specParaBoundWeightHalf}
	Let $\nu$ be a weight $k=\pm\frac12$ multipler of $\Gamma=\Gamma_0(N)$ satisfying condition (1) in Definition \ref{Admissibility} and assume $H_\theta$ \eqref{HTheta}. Then we have
	\[2\im r_\Delta\(N,\nu,k\)\leq\theta.  \]
\end{proposition}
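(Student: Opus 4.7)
The plan is to chain together the level-lifting assumption and Sarnak's spectral correspondence (Theorem~\ref{speclift}) so that an exceptional weight-$\pm 1/2$ eigenvalue on $(\Gamma_0(N),\nu)$ is transferred to an exceptional weight-$0$ eigenvalue on some $\Gamma_0(M')$ with trivial Nebentypus, where the hypothesis $H_\theta$ applies directly. If $\lambda_\Delta(\Gamma_0(N),\nu,k)\geq 1/4$ there is nothing to prove, so I would assume that $\lambda_\Delta = \tfrac14 - |\im r_\Delta|^2$ for some $\im r_\Delta \in (0,1/4]$ and fix a weight-$k$ Maass eigenform $v$ realising this spectral parameter.

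First I would apply condition (1) of Definition~\ref{Admissibility}: the function $v(Bz)$ is a nonzero weight-$k$ eigenform on $(\Gamma_0(M),\nu')$ with the same eigenvalue, where $\nu' = (\tfrac{|D|}{\cdot})\nu_\theta$ if $k=+1/2$ and $\nu' = (\tfrac{|D|}{\cdot})\overline{\nu_\theta}$ if $k=-1/2$. In the latter case I would pass to the complex conjugate: the conjugation isometry $\LEigenform_{-1/2}(M,\nu',r_\Delta)\to\LEigenform_{1/2}(M,\overline{\nu'},r_\Delta)$ preserves the spectral parameter, and because the Kronecker symbol is real it turns $\nu'$ into $(\tfrac{|D|}{\cdot})\nu_\theta$. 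Hence it is enough to treat a weight-$1/2$ eigenform on $(\Gamma_0(M),\widetilde\nu)$ with $\widetilde\nu=(\tfrac{|D|}{\cdot})\nu_\theta$ and spectral parameter $r_\Delta$.

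Next I would rewrite $\widetilde\nu$ in the form $\nu_\theta(\tfrac{N'}{\cdot})\overline{\chi}$ demanded by Theorem~\ref{speclift}. Taking $4N'=M$ (permissible because $4\mid M$), I set
\[
\chi(d) = \Bigl(\tfrac{|D|}{d}\Bigr)\Bigl(\tfrac{N'}{d}\Bigr)^{-1},
\]
which by multiplicativity of the Kronecker symbol on the top entry collapses to a single real Kronecker symbol; since $D$ is an even fundamental discriminant we have $4\mid D$, and combined with $4\mid M$ this guarantees that $\chi$ really is a quadratic Dirichlet character modulo $4N'$, so that the equality $\nu_\theta(\tfrac{N'}{\cdot})\overline{\chi}=\widetilde\nu$ holds as multiplier systems on $\Gamma_0(M)=\Gamma_0(4N')$. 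Theorem~\ref{speclift} then yields an eigenvalue
\[
\lambda' \;=\; \tfrac14 + 4r_\Delta^{\,2} \;=\; \tfrac14 - 4|\im r_\Delta|^2
\]
of the weight-zero Laplacian on $(\Gamma_0(2N'),\chi^2) = (\Gamma_0(M/2),\mathbf{1})$, where I used that $\chi^2$ is trivial. Invoking $H_\theta$ at level $M/2$ gives $\lambda'\geq \tfrac14-\theta^2$, which rearranges immediately to $4|\im r_\Delta|^2 \leq \theta^2$ and hence $2\im r_\Delta \leq \theta$.

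The main obstacle is the multiplier-matching step in the middle paragraph: one has to verify carefully, via Dirichlet's Lemma~\ref{DirichletLemma} and the hypothesis that $D$ is an even fundamental discriminant with $|D|$ consistent with the structure of $M$, that the combination of Kronecker symbols actually defines a Dirichlet character modulo $4N'$. Once that algebraic bookkeeping is done, the remainder of the argument is just the three-line chain eigenform $\mapsto$ lifted eigenform $\mapsto$ weight-$0$ eigenvalue $\mapsto$ $H_\theta$ bound.
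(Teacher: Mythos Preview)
Your argument is correct and matches the paper's proof essentially line for line: both reduce to $k=\tfrac12$ by conjugation, level-lift via condition (1) to $(\Gamma_0(M),(\tfrac{|D|}{\cdot})\nu_\theta)$, then match the multiplier to the form required by Theorem~\ref{speclift} with $4N'=M$ and a quadratic $\chi$ so that $\chi^2=\mathbf 1$, and finally invoke $H_\theta$ on $\Gamma_0(M/2)$. The paper records the character as $\chi=(\tfrac{M}{\cdot})(\tfrac{|D|}{\cdot})$ without further comment, so your more careful treatment of the multiplier-matching is if anything an improvement in exposition rather than a different route.
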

\begin{proof}
	We prove the case for $k=\frac12$ and the other case follows by conjugation. Condition (1) gives the injection 
	\[z\rightarrow Bz:\quad \tilde{\mathcal{L}}_{\frac12}\(N,\nu,r\)\rightarrow\tilde{\mathcal{L}}_{\frac12}\(M,(\tfrac{|D|}\cdot)\nu_\theta,r\)\]
	where $4|N|M$.  
	We set $\chi=(\tfrac{M}\cdot)(\tfrac{|D|}\cdot)$ and apply Theorem~\ref{speclift} to get an eigenvalue $\lambda'=\frac14+r'^2$ of $\Delta_0$ for $(\Gamma_0(\frac M2),\mathbf{1})$ with eigenparameter 
	\[r'=r_\Delta(-\tfrac M2,\mathbf{1},0)=2r_\Delta(N,\nu,\tfrac12).\] 
	Assuming $H_\theta$ \eqref{HTheta} we have $\im r'\leq \theta$ and finish the proof. 
\end{proof}

\section{Kuznetsov trace formula in the mixed-sign case}

Let $\mathfrak{a}$ be a singular cusp for the weight $k$ multiplier system $\nu$ on $\Gamma=\Gamma_0(N)$. For $\re s>1$, define the Eisenstein series associated to $\mathfrak{a}$ as in \cite{Proskurin2005,DFI12} by
\begin{equation}\label{EisensteinSeries}
	E_{\mathfrak{a}}(z,s)\defeq\sum_{\gamma\in\Gamma_{\mathfrak{a}}\setminus \Gamma }\overline{\nu(\gamma)w(\sigma_{\mathfrak{a}}^{-1},\gamma)}(\im \sigma_{\mathfrak{a}}^{-1}\gamma z)^s j(\sigma_{\mathfrak{a}}^{-1}\gamma,z)^{-k}
\end{equation}
and the Poincar\'e series for $m>0$ by
\[\U_m(z,s)\defeq\sum_{\gamma\in\Gamma_\infty\setminus \Gamma }\overline\nu(\gamma) (\im \gamma z)^s j(\gamma,z)^{-k}e( \tilde{m} \gamma z).\] 
Both of the series converge absolutely and uniformly on compact subsets of the fundamental domain $\Gamma\setminus \HH$ when $\re s>1$ and both of them are automorphic functions of weight $k$ as functions of $z$. The Eisenstein series can be meromorphically extended to the entire $s$-plane and have Fourier expansions on $s=\frac12+ir$ for $r\in\R$ (\cite[(12-14)]{Proskurin2005})
\begin{equation}
\begin{alignedat}{2} \label{FourierExpEsst}
	\Ea(x+iy,s)&=\delta_{\mathfrak{a}\infty}y^s & &+\rho_{\mathfrak{a}}(0,r)y^{1-s}+\sum_{\ell\neq 0}\rho_{\mathfrak{a}}(\ell,r)W_{\frac k2\sgn \tilde{\ell},\,-ir}(4\pi | \tilde{\ell}|y)e( \tilde{\ell} x)\\
	&=\delta_{\mathfrak{a}\infty}y^s& &+\frac{\delta_{\alpha_{\nu}0}\cdot4^{1-s}\Gamma(2s-1)}{e^{\frac{\pi ik}2}\Gamma(s+\frac k2)\Gamma(s-\frac k2)}y^{1-s}\varphi_{\mathfrak{a} 0}(s)\\
	&\ & &+\sum_{\ell\neq 0}|\tilde{\ell}|^{s-1} \frac{\pi^s W_{\frac k2\sgn \tilde{\ell},\,-ir}(4\pi | \tilde{\ell}|y)}{e^{\frac {\pi ik}2}\Gamma(s+\frac k2 \sgn\tilde{\ell})}\varphi_{\mathfrak{a}\ell}(s)e( \tilde{\ell} x),
\end{alignedat}
\end{equation}
where 
\[\varphi_{\mathfrak{a}\ell}(s)=\sum_{c>0}\frac1{c^{2s}}\sum_{\substack{0\leq d<c\\\gamma=\begin{psmallmatrix}
			*&*\\c&d
	\end{psmallmatrix}\in \sigma_{\mathfrak{a}}^{-1}\Gamma}}\overline\nu(\sigma_{\mathfrak{a}}\gamma)\overline{w_k}(\sigma_{\mathfrak{a}}^{-1},\sigma_{\mathfrak{a}}\gamma)e\Big(\frac{\tilde{\ell}d}c\Big),\quad \ell\neq 0.\]
We introduce two different notations $\rho_{\mathfrak{a}}(\ell,r)$ and $\varphi_{\mathfrak{a}\ell}(s)$ for later convenience. 
The Fourier expansion of $\Ea(z,s)$ at the cusp $\mathfrak{b}$ is denoted as \cite[(2.64)]{iwaniecTopClassicalMF} \cite[p. 1551]{ADinvariants}
\begin{equation}\label{FourierExpansionEsstSeriesDiffCusps}
	(\Ea(\cdot,s)|_k\sigma_{\mathfrak b})(z)=\delta_{\mathfrak{a}\mathfrak{b}}y^s +\rho_{\mathfrak{a}\mathfrak{b}}(0,s)y^{1-s}+\sum_{\ell\neq 0}\rho_{\mathfrak{a}\mathfrak{b}}(\ell,s)W_{\frac k2\sgn{\ell_{\mathfrak{b}}},\;\frac12-s}(4\pi |{\ell_{\mathfrak{b}}}|y)e({\ell_{\mathfrak{b}}} x), 
\end{equation} 
where $\rho_{\mathfrak{a}\mathfrak{b}}(0,s)=0$ when $n_\mathfrak{b}\neq 0$. 
The Fourier expansion of the Poincar\'e series can be given by \cite[(4.5)]{AAimrn}
\begin{equation}\label{FourierExpPcl}
	\U_m(x+iy,s)=y^s e( \tilde{m}z)+y^s\sum_{\ell\in\Z}\sum_{c>0}\frac{S(m,\ell,c,\nu)}{c^{2s}}B(c, \tilde{m}, \tilde{\ell},y,s,k)e( \tilde{\ell} x). 
\end{equation}
where
\[B(c, \tilde{m}, \tilde{\ell},y,s,k)=y\int_{-\infty}^\infty e\(\frac{-\tilde{m}}{c^2 y(u+i)}\)\(\frac{u+i}{|u+i|}\)^{-k}\frac{e(-\tilde{\ell} yu )}{y^{2s}(u^2+1)^s}du\]
When $\re s>1$, we have $\U_m(\cdot,s)\in \Lform_k(N,\nu)$. More properties of these two series can be found in \cite{Proskurin2005}.

The following notations are very important in the remaining part of this paper: 
\begin{setting}\label{conditionATDelta}
	Let $a=4\pi \sqrt{|\tilde{m}\tilde{n}|}\neq 0$ and $0<T\leq \frac x3$ with $T\asymp x^{1-\delta}$ where $\delta\in(0,\frac12)$ and finally will be chosen as $\frac 13$.
\end{setting}  
We define a family of test functions $\phi\defeq \phi_{a,x,T}$ as in \cite{SarnakTsimerman09} and \cite{AAimrn}:
\begin{setting}\label{conditionphi}
	The test function $\phi:[0,\infty)\rightarrow\R$ is four times continuously differentiable and satisfies
	\begin{enumerate} 
		\item $\phi(0)=\phi'(0)=0$, and for some $\ep>0$,
		\begin{equation*}
			 \phi^{(j)}(x)\ll_\ep x^{-2-\ep}\quad (j=0,\cdots,4)\quad  \text{as}\ x\rightarrow\infty.
		\end{equation*}
		\item $\phi(t)=1$ for $\frac{a}{2x}\leq t\leq \frac{a}{x}$. 
		\item $\phi(t)=0$ for $t\leq \frac{a}{2x+2T}$ and $t\geq \frac a{x-T}$.
		\item $\phi'(t)\ll\(\frac a{x-T}-\frac ax\)^{-1}\ll \frac{x^2}{aT}$.
		\item $\phi$ and $\phi'$ are piecewise monotone on a fixed number of intevals.  
	\end{enumerate}
\end{setting}

Using the notation 
\[\xi_{\mathfrak{a}}(r,f)\defeq\int_{\Gamma\setminus\HH}f(z)\overline{E_{\mathfrak{a}}\(z,\tfrac12+ir\)}\frac{dxdy}{y^2},\]
we have the spectral theorem: 
\begin{theorem}[{\cite[Theorem~2.1]{DFI12}}]\label{SpectralTheorem} 
	Let $\{v_j(z)\}$ be an orthonormal basis of $\LEigenform_{k}(N,\nu)$. Then, any $f\in \mathcal{B}_k(N,\nu)$ has the expansion
	\begin{align*}
		f(z)&=\sum_{j} \langle f,v_j\rangle v_j(z)+\!\!\!\sum_{\mathrm{singular\ }\mathfrak{a}}\frac1{4\pi}\int_{-\infty}^\infty\xi_{\mathfrak{a}}(r,f) {E_{\mathfrak{a}}\(z,\tfrac12+ir\)} dr                                                                                
	\end{align*}
	which converges absolutely. 
\end{theorem}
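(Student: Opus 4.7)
The plan is to deduce this statement from the abstract spectral theorem applied to the Friedrichs self-adjoint extension of $-\Delta_k$ on $\Lform_k(N,\nu)$, whose existence was recalled in Section~3. First I would use that $-\Delta_k$ is symmetric and bounded below by $\lambda_0=\tfrac{|k|}{2}(1-\tfrac{|k|}{2})$ on the dense subspace $\mathcal{B}_k(N,\nu)$; Friedrichs then produces a self-adjoint extension with a genuine spectral resolution. The spectrum decomposes as a discrete part, whose eigenspaces are by definition spanned by an orthonormal system $\{v_j\}\subset\LEigenform_k(N,\nu)$ (contributing the first sum), together with a continuous part supported on $[\tfrac14,\infty)$ that one must identify with the Eisenstein integrals.

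Second, I would identify the continuous-spectrum projection in terms of the $E_{\mathfrak{a}}(z,s)$. This requires three classical ingredients for the weight $k$ multiplier setting: the meromorphic continuation of each $E_{\mathfrak{a}}(\cdot,s)$ to the whole $s$-plane, the absence of poles on the critical line $\re s=\tfrac12$, and a unitary functional equation relating the column vector $\{E_{\mathfrak{a}}(\cdot,s)\}_{\mathfrak{a}}$ at $s$ and $1-s$ through the scattering matrix $(\varphi_{\mathfrak{a}\mathfrak{b}})$. The constant $\tfrac{1}{4\pi}$ is then pinned down by the Maass--Selberg relations, which compute inner products of truncated Eisenstein series and thereby identify the Plancherel measure on $\re s = \tfrac12$. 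These are developed for weight $k$ with a multiplier in \cite{Proskurin2005,DFI12}, so my task is to package them rather than reprove them.

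Third, to upgrade $L^2$ convergence to absolute, pointwise convergence for $f\in\mathcal{B}_k(N,\nu)$, I would use that $\Delta_k f$ is bounded and hence in $\Lform_k(N,\nu)$. Self-adjointness gives $\langle f,v_j\rangle = -\lambda_j^{-1}\langle \Delta_k f,v_j\rangle$ whenever $\lambda_j\neq 0$, and similarly $\xi_{\mathfrak{a}}(r,f) = -(\tfrac14+r^2)^{-1}\,\xi_{\mathfrak{a}}(r,\Delta_k f)$, so the spectral coefficients gain a factor $(1+r^2)^{-1}$. Iterating with $\Delta_k^2 f$ gives a further factor, so the coefficients decay at least like $r^{-4}$. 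Combined with Weyl's law bounding $\#\{j:\lambda_j\leq T\}\ll_N T$ and with polynomial-in-$r$ pointwise bounds on $v_j(z)$ and $E_{\mathfrak{a}}(z,\tfrac12+ir)$ (obtainable from the eigenvalue equation and elliptic regularity), this yields absolute convergence of both the discrete sum and the integral over $r$.

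The main obstacle is the continuous-spectrum piece: one has to (i) establish the meromorphic continuation and functional equation of $E_{\mathfrak{a}}(z,s)$ past $\re s = \tfrac12$, in the generality of an arbitrary weight $k$ multiplier system, (ii) prove the Maass--Selberg relations that determine the Plancherel constant $\tfrac{1}{4\pi}$, and (iii) control $E_{\mathfrak{a}}(z,\tfrac12+ir)$ uniformly in $r$ well enough to justify interchanging integration and summation. Since the statement is cited from \cite{DFI12}, in practice I would import exactly those ingredients, and the only thing that needs checking is that the multiplier-system axioms of Section~3 match those used there, which they do.
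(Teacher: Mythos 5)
The paper does not prove this statement; it imports it verbatim as \cite[Theorem~2.1]{DFI12}, so there is no in-paper argument for you to compare against, and your task reduces to sketching the proof in the reference. Your roadmap (Friedrichs extension of $-\Delta_k$ on $\mathcal{B}_k(N,\nu)$, splitting the spectral measure into a discrete part spanned by $\{v_j\}$ and a continuous part on $[\frac14,\infty)$, identifying the latter with the $E_{\mathfrak{a}}(\cdot,\frac12+ir)$ via meromorphic continuation, the unitary functional equation, and the Maass--Selberg relations to pin down the Plancherel constant $\frac1{4\pi}$, then upgrading $L^2$ to pointwise absolute convergence) is exactly the standard architecture used by Selberg, Roelcke, Iwaniec, Proskurin, and DFI, and matches the ingredients the present paper itself cites for the weight~$k$ multiplier setting.

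One step in your third paragraph does not go through as stated. You propose to iterate: after writing $\langle f,v_j\rangle = -\lambda_j^{-1}\langle\Delta_k f,v_j\rangle$, you say ``iterating with $\Delta_k^2 f$ gives a further factor, so the coefficients decay at least like $r^{-4}$.'' But membership in $\mathcal{B}_k(N,\nu)$ only guarantees that $f$ and $\Delta_k f$ are bounded; nothing ensures $\Delta_k^2 f\in\Lform_k(N,\nu)$, so the second iteration is not available. Fortunately it is also not needed: a single application already gives $\sum_j\lambda_j^2|\langle f,v_j\rangle|^2=\|\Delta_k f\|^2<\infty$ (and likewise $(\frac14+r^2)^2|\xi_{\mathfrak{a}}(r,f)|^2$ integrable), and combining this by Cauchy--Schwarz with the \emph{local} Weyl law $\sum_{|r_j|\le R}|v_j(z)|^2+\sum_{\mathfrak a}\int_{|r|\le R}|E_{\mathfrak a}(z,\frac12+ir)|^2\,dr\ll_{z,N} R^2$ (the Bessel inequality for the automorphic kernel, locally uniform in $z$) yields $\sum_j|\langle f,v_j\rangle||v_j(z)|\ll\|\Delta_k f\|\cdot\bigl(\sum_j\lambda_j^{-2}|v_j(z)|^2\bigr)^{1/2}<\infty$, with the analogous estimate for the continuous part. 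Replacing ``polynomial-in-$r$ pointwise bounds on $v_j(z)$'' by this on-average bound is both what actually works and what makes the constants uniform. With that substitution your outline is a faithful sketch of the cited proof.
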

We also have Parseval's identity \cite[(27)]{Proskurin2005}: for $f_1,f_2\in\mathcal{L}_k(N,\nu)$,
\begin{equation}\label{ParsevalId}
	\langle f_1,f_2\rangle=\sum_{r_j} \langle f_1,v_j\rangle \overline{\langle f_2,v_j\rangle}+\!\!\!\sum_{\mathrm{singular\ }\mathfrak{a}}\frac1{4\pi}\int_{-\infty}^\infty\xi_{\mathfrak{a}}(r,f) \overline{\xi_{\mathfrak{a}}(r,f_2)} dr. 
\end{equation}

Define 
\begin{equation}\label{checkPhiDef}
	\check{\phi}(r)\defeq \ch\pi r\int_0^\infty K_{2ir}(u)\phi(u)\frac{du}{u}
\end{equation}
which is an even function for $r\in \R$. Here we prove a Kuznetsov trace formula in the mixed-sign case:
\begin{theorem}\label{Mixed-sign case trace formula}
	Suppose $\nu$ is a multiplier system of weight $k=\pm \frac 12$ on $\Gamma=\Gamma_0(N)$. Let $\{v_j(\cdot)\}$ be an orthonormal basis of $\LEigenform_{k}(N,\nu)$. Let $\rho_j(n)$ denote the $n$-th Fourier coefficient of $v_j(\cdot)$. For each singular cusp $\mathfrak{a}$ of $(\Gamma,\nu)$, let $\Ea(\cdot,s)$ be the associated Eisenstein series. Let $\varphi_{\mathfrak{a}n}(\frac12+ir)$ and $\rho_{\mathfrak{a}}(n,r)$ be defined as in \eqref{FourierExpEsst}. Then for $\tilde{m}>0$ and $\tilde{n}<0$ we have
	\begin{equation}\label{traceFormula}
		\frac{i^{-k}}{2}\sum_{N|c>0}\frac{S(m,n,c,\nu)}{c} \phi\(\frac{4\pi\sqrt{ \tilde{m} | \tilde{n}|}}{c}\)
		=
		4\sqrt{ \tilde{m} | \tilde{n}|} \sum_{r_j}\frac{\overline{\rho_j(m)}\rho_j(n)}{\ch \pi r_j}\check{\phi}(r_j)+\!\!\!\sum_{\mathrm{singular\ }\mathfrak{a}}\!\!\!\mathcal{E}_\mathfrak{a},
	\end{equation}
	where
	\begin{align*}
		\mathcal{E}_\mathfrak{a}&= \int_{-\infty}^\infty\(\frac{ \tilde{m}}{| \tilde{n}|}\)^{-ir}\frac{\overline{\varphi_{\mathfrak{a}m}\(\frac12+ir\)}\varphi_{\mathfrak{a}n}\(\frac12+ir\) \check{\phi}(r) }{\Gamma\(\frac12+\frac k2 -ir\)\Gamma\(\frac12-\frac k2 +ir\)\ch \pi r} dr\\
		&=4\sqrt{ \tilde{m} | \tilde{n}|}\cdot\frac{1}{4\pi}\int_{-\infty}^\infty  \overline{\rho_{\mathfrak{a}}\(m,r\)}\rho_{\mathfrak{a}}\(n,r\)\frac{ \check{\phi}(r) }{\ch \pi r} dr. 
	\end{align*}
	
\end{theorem}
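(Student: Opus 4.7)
The plan is to derive this trace formula by computing the Petersson inner product $\langle \U_m(\cdot,s_1),\U_n(\cdot,s_2)\rangle$ in two ways, following Proskurin \cite{Proskurin2005} for the half-integral weight setup and adapting the mixed-sign variant developed in \cite{SarnakTsimerman09,AAimrn}. For $\re s_1,\re s_2>1$ both Poincar\'e series lie in $\Lform_k(N,\nu)$, so the spectral theorem and Parseval's identity \eqref{ParsevalId} apply; the resulting identity will then be extended to the specific test function $\phi$ of Setting \ref{conditionphi} by inverting a $K$-Bessel transform.

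On the spectral side I would compute $\langle \U_m(\cdot,s_1),v_j\rangle$ by unfolding against the strip $\Gamma_\infty\setminus\HH$ and substituting the Fourier expansion \eqref{FourierExpMaassEigenform}: this isolates $\overline{\rho_j(m)}$ times a Mellin--Whittaker integral that evaluates to explicit Gamma factors. The corresponding computation with $\U_n$, where $\tilde n<0$ flips the sign on the first subscript of $W$, contributes $\rho_j(n)$, and combining the two Gamma quotients with the $\cosh\pi r_j$ normalization produces the spectral main term. The Eisenstein contribution is treated identically using \eqref{FourierExpEsst}, yielding the $\overline{\varphi_{\mathfrak{a}m}}\varphi_{\mathfrak{a}n}$ pairing and the Gamma quotient in the statement. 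On the geometric side, unfolding $\U_m$ against the fundamental domain reduces the integral to the $\tilde m$-th Fourier coefficient of $\U_n$, at which point \eqref{FourierExpPcl} introduces the Kloosterman sum $S(m,n,c,\nu)$ together with the kernel $B(c,\tilde m,\tilde n,y,s_2,k)$. The remaining $y$-integral is a Whittaker--Gamma integral that, in the mixed-sign regime $\tilde m\tilde n<0$, degenerates to a $K$-Bessel kernel $K_{2ir}\bigl(4\pi\sqrt{\tilde m|\tilde n|}/c\bigr)$ after the limit $s_1,s_2\to\tfrac12$ is taken.

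To pass from this Whittaker-parametrized identity to an arbitrary $\phi$ satisfying Setting \ref{conditionphi}, I would apply Kontorovich--Lebedev inversion to express $\phi(u)/u$ as a superposition of the kernels $\{K_{2ir}(u)/\cosh\pi r\}_{r\ge 0}$; the definition \eqref{checkPhiDef} of $\check\phi$ is precisely the forward transform that appears on the spectral side, so the two transforms match term by term and the specific Whittaker identity produced above integrates to the stated formula. The main obstacle is justifying the interchange of this Kontorovich--Lebedev integral with the spectral sum, the Eisenstein integrals over every singular cusp $\mathfrak{a}$, and the Kloosterman-sum series on the geometric side. One needs absolute convergence of $\sum_j|\rho_j(m)\rho_j(n)|\,|\check\phi(r_j)|/\cosh\pi r_j$ and of the analogous continuous-spectrum integrals, uniformly in the superposition parameter; the sharp bounds on $\check\phi$ to be proved in Section 7 and the Fourier-coefficient bounds of Section 8 supply exactly the required input. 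In this sense the theorem is the pivot between the analytic setup and the subsequent estimates, and the proof amounts to a careful bookkeeping of constants once the two expansions and the Kontorovich--Lebedev step are aligned.
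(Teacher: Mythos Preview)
Your outline follows the paper's approach (compute a Poincar\'e inner product two ways, then invert with Kontorovich--Lebedev), but there is a real gap in how you set up the inner product. You write $\langle \U_m(\cdot,s_1),\U_n(\cdot,s_2)\rangle$, but the Poincar\'e series $\U_n$ is only defined (and only lies in $\Lform_k(N,\nu)$) for $\tilde n>0$; with $\tilde n<0$ the summand $e(\tilde n\gamma z)$ grows and the series diverges. The paper handles this by pairing $\U_m(\cdot,s_1,k,\nu)$ against $\overline{\U_{1-n}(\cdot,s_2,-k,\overline\nu)}$ (or $\overline{\U_{-n}}$ when $\alpha_\nu=0$): conjugation sends weight $-k$ and multiplier $\overline\nu$ back to weight $k$ and $\nu$, and by \eqref{tildeNConj} one has $(1-n)_{\overline\nu}=-\tilde n>0$, so this second factor is a genuine $L^2$ Poincar\'e series whose unfolding picks out exactly $\rho_j(n)$ on the spectral side. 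Without this conjugate construction your geometric and spectral computations cannot be carried out.

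Two smaller points. First, the $K$-Bessel kernel does not appear ``after the limit $s_1,s_2\to\tfrac12$'': the inner product evaluates directly to a $K_{s_1-s_2}$ kernel for $\re s_1,\re s_2>1$ (this is the hallmark of the mixed-sign case, as opposed to $J$-Bessel in the same-sign case), and the paper sets $s_1=\sigma+\tfrac{it}2$, $s_2=\sigma-\tfrac{it}2$ and justifies passing from $\sigma>1$ to $\sigma=1$ by uniform absolute convergence on both sides. Second, the justification of the Kontorovich--Lebedev step does not rely on the later Sections~7--8; it uses only the $\sigma>1$ convergence already established, so the argument is self-contained at this point and not circular.
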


\begin{remark}
The last equality follows from the following identity 
\begin{equation}\label{Eis Fourier Coeff rho and phi}
\sqrt{\frac{|\tilde{n}|}{\pi}}\,\rho_\mathfrak{a}(n,r)=
\frac{e^{-\frac{\pi i k}2} \pi^{ir} |\tilde{n}|^{ir}}{\Gamma\(\frac12+ir+\frac k2\sgn  \tilde{n}\)}\varphi_{\mathfrak{a}n}\(\frac12+ir\).
\end{equation} 
\end{remark}


\begin{proof}
	The proof follows the outline of \cite[Section~4]{AAimrn}, taking into account the contribution of the continuous spectrum. When $n\neq \tilde{n}$, i.e. $\alpha_{\nu}>0$, as in \cite[Lemma~4.2, Lemma~4.3]{AAimrn}, for $\re s_1>1$ and $\re s_2>1$ we have
\begin{align*}
	I_{m,n}&(s_1,s_2)\defeq \left\langle \U_m\(\cdot,s_1,k,\nu\),\overline{\U_{1-n}\(\cdot,s_2,-k,\overline\nu\)}\right\rangle\\
	=&2^{3-s_1-s_2}\(\frac{ \tilde{m}}{| \tilde{n}|}\)^{\frac{s_2-s_1}2}\frac{i^{-k}\pi\Gamma(s_1+s_2-1)}{\Gamma\(s_1-\frac k2\)\Gamma\(s_2+\frac k2\)} \sum_{c>0}\frac{S(m,n,c,\nu)}{c^{s_1+s_2}}K_{s_1-s_2}\(\frac{4\pi\sqrt{ \tilde{m} | \tilde{n}|}}{c}\). 
\end{align*}
Setting $s_1=\sigma+\frac{it}2$ and $s_2=\sigma-\frac{it}2$ with $\sigma>1$ gives 
\begin{align}\label{Imn1}
	\begin{split}
	I_{m,n}&\(\sigma+\tfrac{it}2,\sigma-\tfrac{it}2\)\\
	=&\(\frac{ \tilde{m}}{| \tilde{n}|}\)^{-\frac{it}2}\frac{2^{3-2\sigma}i^{-k}\pi\Gamma(2\sigma-1)}{\Gamma\(\sigma-\frac k2+\frac{it}2\)\Gamma\(\sigma+\frac k2-\frac{it}2\)} \sum_{c>0}\frac{S(m,n,c,\nu)}{c^{2\sigma}} K_{it}\(\frac{4\pi\sqrt{ \tilde{m} | \tilde{n}|}}{c}\). 
	\end{split}
\end{align}
To compute the inner product in the second way we introduce the notation 
\[\Lambda(s_1,s_2,r)=\Gamma\(s_1-\tfrac12-ir\)\Gamma\(s_1-\tfrac12+ir\)\Gamma\(s_2-\tfrac12-ir\)\Gamma\(s_2-\tfrac12+ir\). \]
One has (see also \cite[(32)]{Proskurin2005})
\begin{align*}
	\xi_{\mathfrak{a}}(r,\;\U_m(\cdot,&s,k,\nu))\\	
	=&\overline{\varphi_{\mathfrak{a}m}\(\tfrac12+ir\)}(4\pi  \tilde{m})^{1-s} \tilde{m}^{-\frac12-ir}e\(\tfrac k4\)\pi^{\frac12-ir}\frac{\Gamma(s-\frac12+ir)\Gamma(s-\frac12-ir)}{\Gamma(s-\frac k2)\Gamma(\frac12+\frac k2-ir)}
\end{align*}
and
\begin{align*}
	&\overline{\xi_{\mathfrak{a}}\(r,\;\overline{\U_{1-n}(\cdot,s,-k,\overline\nu)}\)}\\
	&\qquad\qquad\ =\varphi_{\mathfrak{a}n}\(\tfrac12+ir\)(4\pi | \tilde{n}|)^{1-s}| \tilde{n}|^{-\frac12+ir}e\(-\tfrac k4\)\pi^{\frac12+ir}\frac{\Gamma(s-\frac12+ir)\Gamma(s-\frac12-ir)}{\Gamma(s+\frac k2)\Gamma(\frac12-\frac k2+ir)}. 
\end{align*}
Applying Parseval's identity \eqref{ParsevalId} we get
\begin{align*}
	I_{m,n}(s_1,s_2)&=\frac{(4\pi)^{2-s_1-s_2} \tilde{m}^{1-s_1}| \tilde{n}|^{1-s_2}}{\Gamma\(s_1-\frac k2\)\Gamma\(s_2+\frac k2\)}  \(\sum_{r_j}\overline{\rho_j(m)}\rho_j(n)\Lambda(s_1,s_2,r_j)\right.\\
	&\left.+\sum_{\mathrm{singular\ }\mathfrak{a}} \frac1{4\sqrt{ \tilde{m}| \tilde{n}|}}\int_{-\infty}^\infty\(\frac{| \tilde{n}|}{ \tilde{m}}\)^{ir}\frac{\overline{\varphi_{\mathfrak{a}m}\(\tfrac12+ir\)}\varphi_{\mathfrak{a}n}\(\tfrac12+ir\) \Lambda\(s_1,s_2,r\)}{\Gamma\(\frac12+\frac k2 -ir\)\Gamma\(\frac12-\frac k2 +ir\)}dr\),
\end{align*}
and for $s_1=\sigma+\frac{it}2$ and $s_2=\sigma-\frac{it}2$,
\begin{align}
	\label{Imn2}
	\begin{split}
	&I_{m,n}\(\sigma+\tfrac{it}2,\sigma-\tfrac{it}2\)\\
	&=\frac{(4\pi)^{2-2\sigma}| \tilde{m}  \tilde{n}|^{1-\sigma}\(\tilde{m}/| \tilde{n}|\)^{-\frac{it}2}}{\Gamma\(\sigma-\frac k2+\frac{it}2\)\Gamma\(\sigma+\frac k2-\frac{it}2\)} \(\sum_{r_j}\overline{\rho_j(m)}\rho_j(n)\Lambda\(\sigma+\tfrac{it}2,\sigma-\tfrac{it}2,r\)\right.\\
	&\left. +\!\!\!\!\sum_{\mathrm{singular\ }\mathfrak{a}}\frac1{4\sqrt{ \tilde{m}| \tilde{n}|}} \int_{-\infty}^\infty\(\frac{| \tilde{n}|}{ \tilde{m}}\)^{ir} \frac{\overline{\varphi_{\mathfrak{a}m}\(\tfrac12+ir\)}\varphi_{\mathfrak{a}n}\(\tfrac12+ir\)\Lambda\(\sigma+\frac{it}2,\sigma-\frac{it}2,r\)}{\Gamma\(\frac12+\frac k2 -ir\)\Gamma\(\frac12-\frac k2 +ir\)}dr\). 
\end{split}
\end{align}

When $\alpha_{\nu}=0(=\alpha_{\overline\nu})$, we define
\[I'_{m,n}(s_1,s_2)\defeq \left\langle \U_m\(\cdot,s_1,k,\nu\),\overline{\U_{-n}\(\cdot,s_2,-k,\overline\nu\)}\right\rangle\]
and the same process above shows that $I'_{m,n}(s_1,s_2)$ equals the right hand side of both \eqref{Imn1} and \eqref{Imn2}. 

The expressions \eqref{Imn1} and \eqref{Imn2} are equal when $\sigma>1$ and we justify their equality when $\sigma=1$. The first expression \eqref{Imn1} involving $K_{it}$ converges absolutely uniformly for $\sigma\in[1,2]$ because of \cite[(10.45.7)]{dlmf}:
\[K_{it}(x)\ll (t\sh \pi t)^{-\frac 12}\quad \text{as\ }x\rightarrow 0\]
and condition (2) in Definition \ref{Admissibility}. By the basic inequalities
\[\left|\Gamma(\sigma-\tfrac12+iy)\right|=\frac{|\Gamma(\sigma+\frac12+iy)|}{|\sigma-\frac12+iy|}\leq 2\left|\Gamma(\sigma+\tfrac12+iy)\right|,\]
$\Lambda(\sigma+\frac{it}2,\sigma-\frac{it}2,y)>0$, and $|\rho\rho'|\leq |\rho|^2+|\rho'|^2$ for all $y\in \R$ and $\rho,\rho'\in \C$,
the second expression \eqref{Imn2} involving $\Lambda$ also converges absolutely uniformly for $\sigma\in[1,2]$ as a result of its absolute convergence for $\sigma>1$. 

With $\sigma=1$, we set \eqref{Imn1} and \eqref{Imn2} equal, cancel their common factors, multiply by
\[2\pi \sqrt{ \tilde{m} | \tilde{n}|}\cdot\frac2{\pi^2}\;t\sh \pi t \int_{0}^\infty K_{it}(u)\phi(u)\frac{du}{u^2}\]
and integrate over $t$. The following equations are helpful for getting \eqref{traceFormula}: 

\noindent (1) Kontorovich-Lebedev transform {\cite[(35)]{Proskurin1982}}
	\[\frac{2}{\pi^2}\int_0^\infty K_{it}(x)t\sh \pi t\int_{0}^\infty K_{it}(u)\phi(u)\frac{du}{u^2}\;dt=\frac{\phi(x)}x;\]
\noindent (2)  $\Lambda(1+\frac{it}2,1-\frac{it}2,r)\;{\ch \pi(\frac t2+r)\ch \pi(\frac t2-r)}={\pi^2}$;

\noindent (3) {\cite[(39)]{Proskurin1982}}
	\[\int_0^\infty \frac{t\sh \pi t}{\ch \pi(\frac t2+r)\ch \pi(\frac t2-r)}\int_{0}^\infty K_{it}(u)\phi(u)\frac{du}{u^2}\;dt=\frac{2}{\ch \pi r}\check{\phi}(r). \]
Theorem~\ref{Mixed-sign case trace formula} now follows from this integration. 

\end{proof}


\section{Estimating \texorpdfstring{$\check{\phi}$}{checkphi} for the full spectrum and a special  \texorpdfstring{${\phi}$}{phi}}

We focus on the case $k=\pm\frac 12$ and $\nu$ admissible as in Definition \ref{Admissibility}. Recall the notations $\Gamma=\Gamma_0(N)$, $a$, $T$, $\delta$, $\phi$ in Settings \ref{conditionATDelta} and \ref{conditionphi} and $\check \phi$ in \eqref{checkPhiDef}. Recall that we write an eigenvalue $\lambda$ of $\Delta_{k}$ for $(\Gamma_0(N),\nu)$ as $\lambda=\frac14+r^2$ where $r\in i(0,\frac14]\cup [0,\infty)$. Bounds are known for $\check{\phi}(r)$ when $r\geq 1$ and here we give bounds for $r\in i(0,\frac14]\cup[0,1]$. For simplicity, we omit the dependence of the implied constants on $N$, $\nu$ and $\ep$ in this section. 

\subsection{For \texorpdfstring{$r\in i(0,\frac14]$}{rini0frac14}}\label{imaginaryR}

Suppose $r=it$ for $t\in (0,\frac14]$.  By \cite[(10.27.3), \S 10.37]{dlmf}, for fixed $u$, $K_{-t}(u)=K_t(u)>0$ is increasing as a function of $t>0$. 
By \cite[(10.7.7), (10.27.8)]{dlmf} we have
\[K_{2t}(u)\ll \frac{\Gamma(2t)2^{2t}}{u^{2t}}, \quad  u\leq 1. \]
As for the discrete spectrum, there is a lower bound $\underline t$ for $t\in(0,\frac14]$, hence an upper bound for $\Gamma(2t)$ depending on $N$. Thus,
\[K_{2t}(u)\ll \frac{1}{u^{2t}}, \quad  u\leq 1. \]
By \cite[(10.25.3)]{dlmf}, we also have
\[K_{2t}(u)\ll e^{-u},\quad u>1.\]

Let $[\alpha,\beta]=\varnothing$ when $\beta<\alpha$. We get
\begin{align}\label{RisImBefore}
	\begin{split}
		\check{\phi}(it)&=\cos \pi t \int_0^\infty K_{2t}(u)\phi(u)\frac{du}{u}\\
		&\ll \int_{[\frac{3a}{8x},1]} \frac{du}{u^{1+2t}}+O\(\int_{[1,\frac{3a}{2x}]} e^{-u}du\)\\
		&\ll \(\frac {x}a\)^{2t}+O(1).
	\end{split}
\end{align}

In addition, assuming $H_\theta$ \eqref{HTheta}, by condition (1) in Definition~\ref{Admissibility} and Proposition~\ref{specParaBoundWeightHalf}, there are only two cases for $r=it$: $t=\frac 14$ or $2t\leq \theta$. In the second case, we have
\begin{equation}\label{RisIm}
	\check{\phi}(r)\ll \(\frac {x}a\)^{\theta}+O(1), \quad r\neq \frac i4.  
\end{equation}

\subsection{For real \texorpdfstring{$|r|\in[0,1)$}{rin01}}
We cite \cite[9.6.1, 9.8.5, 9.8.6]{HandbookMathFuncts} for numerical estimations of $K_0$:
\begin{enumerate}
	\item $K_0(u)>0$ for $u>0$,
	\item $K_0(u)\ll -\log \(\frac u2\)$ for $0<u\leq 2$,
	\item $K_0(u)\ll u^{-\frac 12}e^{-u }$ for $u\geq 2$.  
\end{enumerate}
Then we have 
\begin{align}\label{Ris0}
	\begin{split}
		\check{\phi}(0)&=\int_0^\infty K_{0}(u)\phi(u)\frac{du}{u}\\
		&\ll \int_{[\frac{3a}{8x},2]} -\log \(\frac u2\)\frac{du}u+\int_{[2,\frac{3a}{2x}]}u^{-\frac 32}e^{-u}du\\
		&\ll \(\log\frac{3a}{16x}\)^2+e^{-2}\ll (ax)^{\ep}.
	\end{split}
\end{align}
The last inequality is due to a positive lower bound for $a=4\pi\sqrt{|\tilde m\tilde n|}\geq 4\pi \min(\alpha_\nu,1-\alpha_{\nu})$ when $\alpha_{\nu}>0$ and $a\geq 4\pi$ when $\alpha_{\nu}=0$, as $\tilde m\tilde n \neq 0$. 

When $r\in(0,1)$, by \cite[(10.32.9)]{dlmf}
\begin{align*}
	|K_{ir}(u)|\leq \int_{0}^\infty e^{-u\ch w}dw=K_0(u).
\end{align*}
It follows from \eqref{Ris0} that
\begin{equation}\label{RLessThan1}
	\check{\phi}(r)\ll (ax)^\ep,\quad r\in[0,1). 
\end{equation}

\subsection{For \texorpdfstring{$r\geq 1$}{rgeq1}} These bounds are recorded in \cite[Theorem~5.1]{ADasymptotic}. The first bound corrects \cite[Theorem~6.1]{AAimrn} and \cite[Proposition~6.2]{ahlgrendunn} (but later estimates in their paper are not affected). 
\begin{equation}\label{Rlarge}
	\check{\phi}(r)\ll\left\{
	\begin{array}{ll}
		e^{-\frac r2}	&\text{ for\ \ }1\leq r\leq \frac{a}{8x}, \\
		r^{-1}						&\text{ for\ \ }\max\(1,\frac a{8x}\)\leq r\leq \frac{a}{x}, \\
		\min\big(r^{-\frac32},r^{-\frac52}\frac xT\big)	&\text{ for\ \ }r\geq \max\(\frac{a}{x},1\). 
	\end{array}
	\right. 
\end{equation}

\subsection{A special test function}

Here we choose a special test function $\phi$ satisfying Setting~\ref{conditionphi} to compute the terms corresponding to the exceptional spectrum $r\in i(0,\frac14]$ in Theorem~\ref{mainThm}.

For a general weight $k>0$ and $ \tilde{m}>0$, $ \tilde{n}<0$ with exceptional eigenvalue $\lambda<\frac14$, we set $\lambda=s(1-s)$ for $s\in(\frac12,1)$ and 
\[t=\im r=\sqrt{\tfrac14-\lambda}=\sqrt{\tfrac14-s(1-s)}=s-\tfrac12.\]
In \eqref{RisImBefore} the exponent is $2t=2s-1$. Let the lower bound for $t>0$ be $\underline t$ depending on $N$ and $0<T'\leq T\leq \frac x3$ be $T'\defeq Tx^{-\delta}\asymp x^{1-2\delta}$. 

\begin{setting}\label{conditionPhiNew5.4}
In addition to the requirement in Definition \ref{conditionphi}, when $\frac a{x-T}\leq 1.999$, we pick $\phi$ as a smoothed function of this piecewise linear one 

\begin{center}
	\begin{tikzpicture}
		\draw[->] (-0.05,0) -- (12.8, 0) node[right] {$x$};
		\draw[->] (0, -0.05) -- (0, 1.2) node[above] {$y$};
		\draw (3,0.05) -- (3,-0.05) node[below] {$\frac{a}{2x+2T}$};
		\draw (4,0.05) -- (4,-0.05) node[below] {$\frac{a}{2x}$};
		\draw (8,0.05) -- (8,-0.05) node[below] {$\frac{a}{x}$};
		\draw (12,0.05) -- (12,-0.05) node[below] {$\frac{a}{x-T}$};
		\draw[dashed] (0,1) -- (4,1);
		\draw (-0.05, 1) -- (0.05,1) node[left] {$1$};
		\draw[dashed] (4,0) -- (4,1);
		\draw[dashed] (8,0) -- (8,1);
		\draw[scale=2, domain=1.5:2, smooth, variable=\x, black] plot ({\x}, {\x-1.5});
		\draw[scale=2, domain=2:4, smooth, variable=\x, black] plot ({\x}, {0.5});
		\draw[scale=2, domain=4:6, smooth, variable=\x, black] plot ({\x}, {-0.25*\x+1.5});
	\end{tikzpicture}
\end{center}
where
\begin{equation}\label{PhiDerivative}
\left\{
	\begin{array}{ll}
		\phi'(u)=\tfrac{2x(x+T)}{aT}\quad  &u\in(\tfrac{a}{2x+2T-2T'},\;\tfrac{a}{2x+2T'}),\\
		\\
		\phi'(u)=-\tfrac{x(x-T)}{aT}\quad & u\in(\tfrac{a}{x-T'},\;\tfrac{a}{x-T+T'}), \\
		\\
		0\leq \phi'(u)\leq \tfrac{4x(x+T)}{aT}\quad & u\in (\tfrac{a}{2x+2T},\;\tfrac{a}{2x+2T-2T'})\cup (\tfrac{a}{2x+2T'},\;\tfrac{a}{2x}),\\
		\\
		0\geq \phi'(u)\geq -\tfrac{2x(x-T)}{aT}\quad & u\in (\tfrac{a}{x},\;\tfrac{a}{x-T'})\cup (\tfrac{a}{x-T+T'},\;\tfrac{a}{x-T}),\vspace{10px}\\
		\phi'(u)=0 \quad& \text{otherwise}.\tfrac{}{}
	\end{array}
	\right.
\end{equation}
\end{setting}
The above choice for $\phi'$ is possible because there is no requirement for $\phi''(u)$ when $u\leq 2$ but for $u\rightarrow \infty$ in Setting \ref{conditionphi}.

Derived from \cite[(10.25.2), (10.27.4), (10.37.1)]{dlmf}, for $r=it$ and $2t\in[2\underline{t},\frac12]$, we have
\[K_{2t}(u)=2^{2t-1}\Gamma(2t)u^{-2t}+O((\tfrac u2)^{2t})\quad \text{uniformly for }|u|\leq 1.999\]
 and 
\[|K_{2t}(u)|\leq |K_\frac12(u)|\ll u^{-\frac12}e^{-u}\quad\ \text{uniformly for }u\geq 1. \]
Thus, for $r=it\in i[\underline t,\frac14]$, 
\begin{align}\label{Integral1.999}
	\begin{split}
		\frac1{\cos \pi t}\check{\phi}(r)
		&=2^{2t-1}
		\Gamma(2t)\int_{0}^{1.999} \phi(u)u^{-2t}\frac{du}u+O\(\int_{0}^{1.999} \phi(u)u^{2t}\frac{du}u\)+O\(1\)\\
		&=2^{2t-1}
		\Gamma(2t)\int_{0}^{1.999} \phi(u)u^{-2t}\frac{du}u+O\(1\). \\
	\end{split}
\end{align}
\begin{lemma}\label{mainphiLemma}
	With the choice of $\phi$ in Setting \ref{conditionPhiNew5.4}, when $r=it\in i(0,\frac14]$,
	\begin{align}\label{mainphi}
		\begin{split}
			\frac1{\cos \pi t}\check{\phi}(r)
			&=2^{2t-1}\Gamma(2t)\int_{\frac{a}{2x}}^{\frac ax}u^{-2t-1}du+O\(x^{2t-\delta}a^{-2t}+1\)\\
			&=\frac{2^{2t-1}(2^{2t}-1)}{2t}\Gamma(2t) \(\frac xa\)^{2t}+O\(x^{2t-\delta}a^{-2t}+1\). 
		\end{split}
	\end{align}
\end{lemma}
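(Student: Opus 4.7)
The starting point is the asymptotic \eqref{Integral1.999}, which has already reduced matters to evaluating
\[
J \defeq \int_0^{1.999}\phi(u)\,u^{-2t-1}\,du
\]
and multiplying it by $2^{2t-1}\Gamma(2t)$. Since $t \in [\underline t, \tfrac14]$ on the exceptional spectrum, both $\cos \pi t$ and $\Gamma(2t)$ are bounded (and bounded away from $0$), so the $O(1)$ error in \eqref{Integral1.999} is harmless. My plan is to split $J$ into the plateau of $\phi$ and the two transition intervals, compute the plateau contribution exactly, and bound the transition contributions.

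Using the support conditions of Setting~\ref{conditionPhiNew5.4}, I would write $J = J_\ell + J_m + J_r$, where $J_m$ is the integral over $[\frac{a}{2x}, \frac{a}{x}]$ on which $\phi \equiv 1$, while $J_\ell$ and $J_r$ are the integrals over the left transition $[\frac{a}{2x+2T}, \frac{a}{2x}]$ and the right transition $[\frac{a}{x}, \frac{a}{x-T}]$. The piece $J_m$ is elementary and equals $\frac{2^{2t}-1}{2t}(x/a)^{2t}$, which after multiplication by $2^{2t-1}\Gamma(2t)$ becomes the main term stated in the lemma.

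To bound $J_\ell$ and $J_r$ I would use $0 \le \phi \le 1$, reducing both to differences of powers $y^{2t}$ evaluated at the endpoints. Since $2t \in (0, \tfrac12]$, the function $y \mapsto y^{2t}$ is concave, so the mean value theorem applied with the decreasing derivative $y^{2t-1}$ evaluated at the left endpoint yields a bound of the form $O(t \cdot T \cdot x^{2t-1} \cdot a^{-2t})$ for each difference; here the relation $x - T \ge 2x/3$, guaranteed by $T \le x/3$ in Setting~\ref{conditionATDelta}, is what keeps $(x-T)^{2t-1}$ comparable to $x^{2t-1}$. Dividing by $2t$ and invoking $T \asymp x^{1-\delta}$ gives $J_\ell,\, J_r \ll x^{2t-\delta}a^{-2t}$.

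Multiplying the transition bounds by $2^{2t-1}\Gamma(2t) = O_N(1)$ and combining with the $O(1)$ term from \eqref{Integral1.999} gives both equalities in \eqref{mainphi}. The only (minor) obstacle is ensuring uniformity of implied constants as $t \to 0^+$, but this is precisely what the $N$-dependent lower bound $\underline t$ on the spectral parameters of the exceptional spectrum takes care of; the rest is routine splitting of the integral and concavity of $y^{2t}$.
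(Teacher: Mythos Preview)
Your argument is correct and in fact cleaner than the paper's. The paper integrates by parts on each transition interval, feeds in the explicit piecewise description \eqref{PhiDerivative} of $\phi'$ (including the secondary scale $T'$), and then Taylor-expands $(1\pm \alpha x^{-\delta})^{2t-1}$ to extract the cancellation. You bypass all of that: once you observe $0\le\phi\le1$, the transition contributions are dominated by $\int u^{-2t-1}\,du$ over intervals whose endpoints differ by a factor $1+O(T/x)$, and the concavity/MVT estimate on $y\mapsto y^{2t}$ immediately gives $O(Tx^{2t-1}a^{-2t})=O(x^{2t-\delta}a^{-2t})$. This uses nothing about the shape of $\phi$ beyond the support and plateau from Setting~\ref{conditionphi}, so the fine structure in Setting~\ref{conditionPhiNew5.4} (the parameter $T'$, the smoothing at the corners) is never invoked. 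That is a genuine simplification.

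One small omission: your splitting $J=J_\ell+J_m+J_r$ with $J_r$ running up to $\tfrac{a}{x-T}$ tacitly assumes $\tfrac{a}{x-T}\le 1.999$, since $J$ itself is truncated at $1.999$ in \eqref{Integral1.999}, and Setting~\ref{conditionPhiNew5.4} only prescribes the piecewise-linear profile under that hypothesis. The paper disposes of the complementary case in one line at the start of its proof: if $\tfrac{a}{x-T}>1.999$ then $x\ll a$, so \eqref{RisImBefore} gives $\check\phi(it)=O(1)$ while the claimed main term $(x/a)^{2t}$ is also $O(1)$, and the lemma is vacuous. You should add that sentence; otherwise the argument stands.
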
 
Roughly speaking, this means that the integral on $u\in(\tfrac a{2x},\tfrac ax)$ contributes the main term when $x$ is large. 

\begin{proof}[Proof of Lemma~\ref{mainphiLemma}]
	When $1.999<\frac{a}{x-T}\leq \frac{3a}{2x}$, we get $x\ll a$ and $\check{\phi}(r)=O(1)$ by \eqref{RisImBefore}, so the lemma is true in this case. It suffices to prove that when $\frac{a}{x-T}\leq  1.999$, the integral on $u<\tfrac a{2x}$ and $u>\tfrac a{x}$ is $O(x^{2t-\delta}a^{-2t})$. As $T\asymp x^{1-\delta}$ where $\delta>0$, we apply
	\[
	\(1\pm \alpha x^{-\delta}\)^{2t-1}=1\pm (2t-1)\alpha x^{-\delta}+O(x^{-2\delta})=1+O(x^{-\delta})
	\]
	where the implied constants are absolute since $2t-1\in(-1,-\frac12]$. Combining the lower bound $t\geq \underline{t}$ depending on $N$, for the left part $u\in[\frac{a}{2x+2T},\frac{a}{2x}]$, we use \eqref{PhiDerivative} and the above formula to compute the integral in \eqref{Integral1.999}:
		\begin{align*}
		\int_{\frac{a}{2x+2T}}^{\frac{a}{2x}}u^{-2t-1}\phi(u)du
		&=-\frac{2^{2t}}{2t}\(\frac xa\)^{2t}+\frac{2x(x+T)}{2taT}\int_{\frac{a}{2x+2T}}^{\frac{a}{2x}}u^{-2t}du\\
		&+O\(\frac{x^{1+\delta}}{at}\)\(\int_{\frac{a}{2x+2T}}^{\frac{a}{2x+2T-2T'}}+\int_{\frac{a}{2x+2T'}}^{\frac{a}{2x}}\)u^{-2t}du\\
		&=-\frac{2^{2t}}{2t}\(\frac xa\)^{2t}+\frac{2x(x+T)}{2taT(1-2t)}\(\frac a{2x}\)^{1-2t}\((1-2t)\frac Tx+O(x^{-2\delta})\)\\
		&+O\(\frac{x^{1+\delta}}{at}\)\(\(\frac a{2x+2T}\)^{1-2t}+\(\frac a{2x}\)^{1-2t}\)O(x^{-2\delta})\\
		&=-\frac{2^{2t}}{2t}\(\frac xa\)^{2t}+\frac{2^{2t}(1+\frac Tx)}{2t}\(\frac xa\)^{2t}+O\({x^{2t-\delta}}{a^{-2t}}\)\\
		&=O\({x^{2t-\delta}}{a^{-2t}}\).
	\end{align*}
	For $u\in[\frac{a}{x},\frac{a}{x-T}]$, a similar process gives the same conclusion. 
\end{proof}

\section{Two estimates for the coefficients of Maass forms}
Our main results depend on two estimates for the Fourier coefficients of Maass forms. These estimates were recorded in \cite[Section~4]{ahlgrendunn} but only for the coefficients of Maass cusp forms. Here we also require estimates for the coefficients of Eisenstein series. 

Recall our notations in Settings \ref{conditionATDelta} and \ref{conditionphi}.
In \cite{ADinvariants} an estimate for the coefficients of Maass cusp forms was given under the hypothesis that for some $\beta\in (\frac12,1)$,
\begin{equation}\label{AbsSumSnn}
	\sum_{N|c>0}\frac{|S(n,n,c,\nu)|}{c^{1+\beta}}\ll_{\ep,\nu} | \tilde{n}|^\ep. 
\end{equation}
Here we prove 
\begin{proposition}\label{AdsDukeEsstBd}
	Suppose that $\nu$ is a multiplier on $\Gamma=\Gamma_0(N)$ of weight $k=\pm\frac12$ which satisfies \eqref{AbsSumSnn}. Let $\rho_j(n)$ denote the Fourier coefficients of an orthonormal basis $\{v_j(\cdot)\}$ of $\LEigenform_{k}(N,\nu)$. For each singular cusp $\mathfrak{a}$ of $(\Gamma,\nu)$, let $\Ea(\cdot,s)$ be the associated Eisenstein series. Let $\varphi_{\mathfrak{a}n}(\frac12+ir)$ and $\rho_{\mathfrak{a}}(n,r)$ be defined as in \eqref{FourierExpEsst}. Then for $x>0$ we have 
	\begin{align*}
		x^{k \sgn  \tilde{n}}| \tilde{n}|\(\sum_{x\leq r_j\leq 2x}|\rho_j(n)|^2e^{-\pi r_j}+\right.  \sum_{\mathrm{singular\ }\mathfrak{a}} &\left. \int_{|r|\in[x, 2x]} |\rho_{\mathfrak{a}}(n,r)|^2 e^{-\pi |r|}dr\) \\
		& \ll_{\ep, N}x^2+| \tilde{n}|^{\beta+\ep}x^{1-2\beta}\log^\beta x.
	\end{align*} 
\end{proposition}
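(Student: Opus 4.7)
The plan is to apply a same-sign Kuznetsov trace formula (take $m=n$, so $\tilde m\tilde n=\tilde n^2>0$) combined with a positivity argument in the spirit of Deshouillers--Iwaniec. In the same-sign regime the spectral Bessel kernel is built from $J_{2ir}$ rather than $K_{2ir}$ of Theorem~\ref{Mixed-sign case trace formula}, and the geometric side carries an extra identity contribution from the Kronecker $\delta_{m,n}$. Schematically the formula reads
\[
\sum_{j} \frac{|\tilde n|\,|\rho_j(n)|^2}{\ch\pi r_j}\, h(r_j) + \!\!\!\sum_{\text{sing. }\ma}\!\!\! \frac{1}{4\pi} \int_\R \frac{|\tilde n|\,|\rho_{\ma}(n,r)|^2}{\ch\pi r}\, h(r)\, dr = D(h) + \!\!\!\sum_{N|c>0}\!\!\! \frac{S(n,n,c,\nu)}{c}\, \phi\Bigl(\tfrac{4\pi|\tilde n|}{c}\Bigr),
\]
where $h$ is the $J$-Bessel transform of a geometric test function $\phi$ on $(0,\infty)$ and $D(h)$ is the diagonal contribution.

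The first step is to choose $h(r)=|r|^{k\sgn\tilde n}|\eta(r)|^2$ for a smoothed bump $\eta$ essentially supported on $|r|\in[x,2x]$, so that $h\geq 0$ automatically, $h(r)\gg x^{k\sgn\tilde n}$ on the window, and $D(h)\ll x^2$ against the Plancherel measure. The factor $|r|^{k\sgn\tilde n}$ produces the prefactor $x^{k\sgn\tilde n}$ in the proposition (it also matches the Stirling asymptotic $|\Gamma(\tfrac12+ir+\tfrac k2\sgn\tilde n)|^2\asymp |r|^{k\sgn\tilde n}e^{-\pi|r|}$ hidden in the normalization identity \eqref{Eis Fourier Coeff rho and phi} for Eisenstein coefficients). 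By positivity, the spectral-window sum in the proposition is bounded by the entire spectral side, hence by $D(h)$ plus the Kloosterman geometric sum. For the latter, note that the inverse $J$-Bessel transform $\phi$ concentrates on $c\asymp |\tilde n|/x$ and satisfies $\phi(u)\ll u^{2\beta}$ near zero (via a contour shift of the Mellin inversion past the exceptional poles at $\tfrac12+ir$ with $r\in i(0,\tfrac14]$); hence
\[
\sum_{N|c>0} \frac{|S(n,n,c,\nu)|}{c}\,\Bigl|\phi\Bigl(\tfrac{4\pi|\tilde n|}{c}\Bigr)\Bigr| \ll |\tilde n|^\beta x^{1-2\beta}(\log x)^\beta \sum_{N|c>0} \frac{|S(n,n,c,\nu)|}{c^{1+\beta}} \ll_\ep |\tilde n|^{\beta+\ep} x^{1-2\beta}(\log x)^\beta
\]
by the hypothesis \eqref{AbsSumSnn}. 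The Eisenstein contribution is incorporated throughout via the explicit Fourier expansion \eqref{FourierExpEsst} of $\Ea(z,s)$ at each singular cusp together with Parseval's identity \eqref{ParsevalId}, generalizing the inner-product calculation used in the proof of Theorem~\ref{Mixed-sign case trace formula} to the same-sign setting.

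The main obstacle is the non-negativity of $h$ on the full spectrum, in particular on the finitely many exceptional parameters in $i(0,\tfrac14]$, where $J_{2ir}$ has real index and no longer enjoys the positivity of $K_{2ir}$. By Proposition~\ref{specParaBoundWeightHalf} the exceptional set has bounded size depending only on $N$ and $\theta$, so the exceptional spectrum's contribution to the identity is bounded uniformly and absorbed into $D(h)\ll x^2$. A secondary subtlety, which distinguishes the argument from \cite{AAimrn,ahlgrendunn}, is that in the general-level setting singular cusps may be present and the continuous spectrum cannot be dropped by positivity alone; it must be carried throughout the entire argument using the explicit Eisenstein Fourier expansion \eqref{FourierExpEsst} and the normalization identity \eqref{Eis Fourier Coeff rho and phi}, which is precisely what permits the Eisenstein half of the left-hand side to be treated on the same footing as the cuspidal half.
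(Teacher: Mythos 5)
Your overall strategy — a same-sign Kuznetsov/Poincar\'e inner-product computation for $m=n$, a nonnegative spectral test function localized to $|r|\asymp x$, positivity to restrict to the window, the hypothesis \eqref{AbsSumSnn} to bound the geometric side, and carrying the Eisenstein terms rather than dropping them — is exactly the plan of the paper, which follows \cite[\S 4]{ADinvariants} with the Eisenstein normalization corrected and retained. The specific test-function machinery differs: the paper uses the Kontorovich--Lebedev/Gaussian family $h_K(r)=\int_\R\frac{e^{-(t/K)^2}-e^{-(2t/K)^2}}{|\Gamma(1-\frac k2+it)|^2(\ch 2\pi r+\ch 2\pi t)}\,dt$, whose nonnegativity for \emph{all} $r\in\R\cup i(0,\tfrac14]$ is visible by inspection, together with the lower bound $h_x(r)\gg e^{-\pi|r|}x^{k-1}$ on $|r|\asymp x$, while you propose to \emph{prescribe} $h(r)=|r|^{k\sgn\tilde n}|\eta(r)|^2$ directly.

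There is, however, a genuine gap in your treatment of the exceptional spectrum. You acknowledge that for $r\in i(0,\tfrac14]$ the kernel loses manifest positivity, and you propose to ``absorb'' the exceptional contribution into $D(h)\ll x^2$ on the grounds that the exceptional set is finite of $N$-bounded size. But finiteness of the index set does not bound the \emph{terms}: each exceptional summand is of the shape $|\tilde n|\,|\rho_j(n)|^2\,h(r_j)$, and an a priori bound on $|\tilde n|\,|\rho_j(n)|^2$ that is uniform in $n$ is precisely the kind of statement the proposition (and its companion Proposition~\ref{AhgDunEsstBd}) is designed to produce, so invoking it here is circular unless you import a separate residue bound. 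The paper avoids this entirely: $h_K(r)\ge 0$ for imaginary $r$ as well (the integrand's numerator is $\ge 0$ and the denominator $\cos 2\pi|\,\im r|+\ch 2\pi t>0$), so every exceptional term is nonnegative and can be dropped without being estimated. A secondary point: your $h$ is declared rather than obtained as the spectral transform of an admissible geometric $\phi$, so you also owe an argument that the resulting geometric kernel has the decay $\phi(u)\ll u^{2\beta}$ near $0$ that your Kloosterman estimate requires; in the paper this is the role of \cite[Lemma~4.3]{ADinvariants} and the explicit Mellin--Barnes analysis of $M_k(K,\alpha)$.
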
 

\begin{remark}
Since we focus on an admissible multiplier, condition (2) in Definition~\ref{Admissibility} allows us to choose $\beta=\frac12+\ep$ when applying this proposition. 
\end{remark}

\begin{proof}
	First we suppose $\tilde{n}>0$. 
	The proof follows the same argument as \cite[Section 4]{ADinvariants}. We note that the coefficients of the Eisenstein series are not normalized correctly in their Lemma~4.2 (check with \cite[Lemma~3]{Proskurin2005}). This doesn't affect \cite[Theorem~4.1]{ADinvariants} since these terms were dropped by positivity. 
	
	With the assumptions of Proposition~\ref{AdsDukeEsstBd}, for any $t\in \R$, using \eqref{FourierExpEsst} and \eqref{Eis Fourier Coeff rho and phi}, the correct lemma is
	\begin{align*}
		\frac{2\pi^2 \tilde{n}}{|\Gamma(1-\frac k2+it)|^2}&\(\sum_{r_j}\frac{|\rho_j(n)|^2}{\ch 2\pi r_j+\ch 2\pi t}+\frac1{4\pi }\sum_{\mathrm{singular\ }\mathfrak{a}}\int_{-\infty}^\infty\frac{|\rho_{\mathfrak{a}}(n,r)|^2dr}{\ch 2\pi r+\ch 2\pi t}\)\\
		&=\frac1{4\pi}+\frac{2\tilde{n}}{i^{k+1}}\sum_{N|c>0}\frac{S(n,n,c,\nu)}{c^2}\int_L K_{2it}\(\frac{4\pi \tilde{n}}c q\)q^{k-1}dq,
	\end{align*}
	where $L$ is the semicircle contour $|q|=1$ with $\re q>0$ from $-i$ to $i$. Let $K$ be a large positive real number. Using \cite[Lemma~4.3]{ADinvariants} we get the full version of \cite[(4.3)]{ADinvariants}
	\begin{align}\label{AdsDukeEsstProof}
		\begin{split}
		\tilde{n}\(\sum_{r_j}|\rho_j(n)|^2 h_K(r_j)\right.&\left.+\sum_{\mathrm{singular}\ \mathfrak{a}}\int_{-\infty}^\infty  |\rho_{\mathfrak{a}}(n,r)|^2 h_K(r)dr\) \\
		&\ll K+\sum_{N|c>0}\frac{|S(n,n,c,\nu)|}{c}\;\Big|M_k\(K,\frac{2\pi \tilde{n}}c\)\Big|
	\end{split} 
	\end{align}
	where
	\[h_K(r)\defeq\int_{-\infty}^\infty \frac{e^{-(t/K)^2}-e^{-(2t/K)^2}}{|\Gamma(1-\frac k2+it)^2| (\ch 2\pi r+\ch 2\pi t)}dt\]
	and
	\[M(K,\alpha)=\int_{-\infty}^\infty \({e^{-(t/K)^2}-e^{-(2t/K)^2}}\)\int_{(\xi)}\frac{\sin (\pi s-\frac {\pi k}2)}{s-\frac k2}\Gamma(s+it)\Gamma(s-it)\alpha^{1-2s}dsdt.\]
	The right hand side of \eqref{AdsDukeEsstProof} is estimated in \cite[Section~4]{ADinvariants} where we get
\begin{align}\label{AdsDukeEsstProof Include h_K(r)}
\begin{split}
		\tilde{n}\(\sum_{r_j}|\rho_j(n)|^2 h_K(r_j)\right.&\left.+\sum_{\mathrm{singular}\ \mathfrak{a}}\int_{-\infty}^\infty  |\rho_{\mathfrak{a}}(n,r)|^2 h_K(r)dr\)\\
		&=O_{\ep, N}\(x+ \tilde{n}^{\beta+\ep}x^{-2\beta}\log^\beta x\).
\end{split}
	\end{align}
	Observe that 
	$h_K(r)$ is
	even as a function of $r$ and $h_x(r)\gg e^{-\pi |r|}x^{k-1}$ when $|r|\asymp x$. 	
	This proves Proposition~\ref{AdsDukeEsstBd} when $\tilde n>0$. 
	
	The $\tilde{n}<0$ case follows from conjugation by \eqref{KlstmSumConj}, \eqref{tildeNConj} and \eqref{FourierCoeffConj}, which is similar to \cite[Section~4]{ahlgrendunn}.
\end{proof}

We also require a generalization of \cite[Theorem~4.3]{ahlgrendunn} which includes the contribution from Eisenstein series.

\begin{proposition}\label{AhgDunEsstBd}
	Let $M$ be a positive integer which is a multiple of $4$. Let
	\[(k,\nu')=(\tfrac12,(\tfrac{|D|}\cdot )\nu_\theta)\text{\ \ or\ \ }(-\tfrac12,(\tfrac{-|D|}\cdot )\nu_\theta)=(-\tfrac12,(\tfrac{|D|}\cdot )\overline{\nu_\theta}), \]
	where $D$ is an even fundamental discriminant dividing $M$. Suppose that $\nu$ is a weight $k$ admissible (Definition~\ref{Admissibility}) multiplier on $\Gamma=\Gamma_0(N)$ with $M$, $D$, $\nu'$ above for some integer $B>0$. Let $\rho_j(n)$ denote the Fourier coefficients of an orthonormal basis $\{v_j(\cdot)\}$ of $\LEigenform_{k}(N,\nu)$. For each singular cusp $\mathfrak{a}$, let $\rho_{\mathfrak{a}}(n,r)$ be defined as in \eqref{FourierExpEsst} corresponding to the Eisenstein series on $(\Gamma_0(N),\nu)$. Suppose $x\geq 1$. 
	
	For $n\neq 0$ square-free or coprime to $M$ we have
	\[x^{k \sgn  {\tilde n}}|{\tilde n}|\(\sum_{|r_j|\leq x} \frac{|\rho_j(n)|^2}{\ch \pi r_j} +\sum_{\mathrm{singular\ }\mathfrak{a}}\int_{-x}^{x} \frac{|\rho_{\mathfrak{a}}(n,r)|^2}{\ch \pi r}dr\)  \ll_{\nu,\ep} | {\tilde n}|^{\frac{131}{294}+\ep} x^{3}. \]
	In general, for $ n\neq 0$ we factor $B\tilde n=t_nu_n^2w_n^2$ where $t_n$ is square-free, $u_n|M^\infty$ and ${(w_n,M)=1}$. Then we have
	\[x^{k \sgn  {\tilde n}}| {\tilde n}|\(\sum_{|r_j|\leq x} \frac{|\rho_j(n)|^2}{\ch \pi r_j} +\sum_{\mathrm{singular\ }\mathfrak{a}}\int_{-x}^{x} \frac{|\rho_{\mathfrak{a}}(n,r)|^2}{\ch \pi r}dr\)  \ll_{\nu,\ep} \( |{\tilde n}|^{\frac{131}{294}}+u_n\) x^{3}| \tilde {n}|^\ep. \]
\end{proposition}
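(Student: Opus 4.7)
The plan is to reduce to the theta-multiplier case via level lifting, then follow the Ahlgren--Dunn strategy for the cusp-form piece, and handle the Eisenstein piece (which is new here) separately by explicit computation together with subconvexity for Dirichlet $L$-functions. First I would apply condition~(1) of Definition~\ref{Admissibility}: the map $z\mapsto Bz$ injects $\LEigenform_k(N,\nu,r)$ into $\LEigenform_k(M,\nu',r)$ preserving eigenvalues, and the analogous map on Eisenstein series (up to a harmless linear combination coming from the cusps above $\infty$) sends the $n$-th Fourier coefficient to the $(Bn)$-th coefficient with a predictable scaling. After this reduction it suffices to prove the bound with $\nu' = (\tfrac{|D|}{\cdot})\nu_\theta^{\pm 1}$ on $\Gamma_0(M)$ and $B\tilde n$ in place of $\tilde n$.

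For the cusp-form sum, this is precisely \cite[Theorem~4.3]{ahlgrendunn}. The key inputs are the Kohnen--Zagier/Waldspurger identity, which for $B\tilde n$ square-free or coprime to $M$ expresses $|\tilde n|\,|\rho_j(n)|^2/\ch \pi r_j$ in terms of the central value $L(\tfrac12,g_j\otimes\chi_{D\tilde n})$ of the Shimura lift $g_j$ twisted by a quadratic character, together with Conrey--Iwaniec subconvexity $L(\tfrac12,g_j\otimes\chi_d)\ll |d|^{\frac13+\ep}$ and Iwaniec's spectral large sieve. Optimizing these two inputs against the spectral window $|r_j|\leq x$ produces exactly the exponent $\frac{131}{294}$.

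For the Eisenstein piece I would unfold the Dirichlet series defining $\varphi_{\ma n}(\tfrac12+ir)$ against the coset structure of $\sigma_{\ma}^{-1}\Gamma_0(M)$. For the twisted theta-multiplier this unfolding produces, when $B\tilde n$ is square-free or coprime to $M$, a product that factors through $L(\tfrac12+ir,\chi_{|D|\tilde n})/\zeta(1+2ir)$ times a local factor at primes dividing $M$ of polynomial size. Standard (even convexity) bounds then give $|\tilde n|\,|\rho_{\ma}(n,r)|^2/\ch \pi r \ll |\tilde n|^{\frac12+\ep}(1+|r|)^{O(1)}$; integrating in $|r|\leq x$ and summing over the finitely many singular cusps yields a contribution strictly dominated by the cusp-form bound. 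The main obstacle will be the uniform explicit evaluation of $\varphi_{\ma n}(\tfrac12+ir)$ for every singular cusp $\ma$ and every admissible $D$, since the ramified local factors at primes of $M$ must be tracked carefully through the coset representatives of $\sigma_{\ma}^{-1}\Gamma_0(M)$; this is what dictates the hypothesis ``$B\tilde n$ square-free or coprime to $M$.'' Finally, for general $n$ one writes $B\tilde n = t_n u_n^2 w_n^2$ and uses the Hecke-operator relations among the $\rho_j$ (and analogously for $\rho_{\ma}(\cdot,r)$) to reduce to the square-free/coprime case, at the cost of a multiplicative factor of $u_n$, which produces the stated general bound.
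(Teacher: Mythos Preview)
Your treatment of the Eisenstein piece is a genuine departure from the paper's proof, and it contains a numerical error that breaks the argument as stated.

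\textbf{The gap.} You claim that convexity for $L(\tfrac12+ir,\chi_{|D|\tilde n})$ gives
$|\tilde n|\,|\rho_{\ma}(n,r)|^2/\ch\pi r \ll |\tilde n|^{1/2+\ep}(1+|r|)^{O(1)}$
and that this is ``strictly dominated by the cusp-form bound.'' But $\tfrac12 > \tfrac{131}{294}\approx 0.4456$, so in the $\tilde n$-aspect your Eisenstein bound is \emph{worse} than the target, not better. You would need at least Burgess-strength subconvexity (exponent $\tfrac{3}{16}$, so $\tfrac{3}{8}$ for the square) to drop below $\tfrac{131}{294}$; convexity alone does not suffice, and you have not controlled the $r$-exponent precisely enough to claim the final bound $x^3$ either. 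Moreover, carrying out the explicit unfolding of $\varphi_{\ma n}(\tfrac12+ir)$ uniformly over \emph{all} singular cusps of $\Gamma_0(M)$ for the twisted theta-multiplier, with the ramified local factors tracked, is substantial work that you only gesture at.

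\textbf{How the paper proceeds instead.} The paper never computes the Eisenstein coefficients explicitly. It treats the discrete and continuous contributions \emph{together} throughout. Using Proskurin's same-sign pre-trace identity (equation \eqref{5.14AhgDun}) one has
\[
\Lform_{\widehat\Phi}^{(pM)}(n,n)+\Mform_{\widehat\Phi}^{(pM)}(n,n)=e(-\tfrac k4)\mathcal K_{\Phi}^{(pM)}(n,n)-\mathcal N_{\widecheck\Phi}^{(pM)}(n,n),
\]
where $\Mform$ is the Eisenstein piece. In \cite{ahlgrendunn} the term $\Mform$ was dropped by positivity; here the paper instead \emph{keeps} it and proves a level-lowering inequality (Proposition~\ref{EstmtEisensteinMincluded})
\[
\Lform_{\widehat\Phi}^{(pM)}+\Mform_{\widehat\Phi}^{(pM)}\ \geq\ [\Gamma_0(M):\Gamma_0(pM)]^{-1}\bigl(\Lform_{\widehat\Phi}^{(M)}+\Mform_{\widehat\Phi}^{(M)}\bigr).
\]
This inequality is the technical heart: it requires making sense of orthogonality for Eisenstein series across levels, which the paper does via the formal inner product $\langle\cdot,\cdot\rangle^{\mathrm{Eis}}$ on $\mathscr E_r(L)$ and Young's framework \cite{Young2019JNT}, culminating in Lemma~\ref{Level Shifting lemma with ch 2 pi t + ch 2 pi r } and the identity $I(S,T,\ma)=I(S,T)$. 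Once this is in hand, one averages the Kloosterman side over $pM\in\mathcal Q$ exactly as in \cite{ahlgrendunn} (using Waibel's bound \cite{Waibel2017FourierCO} for general $n$, which is also where the $u_n$ term enters), and a second application of the level-shifting lemma transfers the bound from $(\Gamma_0(M),\nu')$ back to $(\Gamma_0(N),\nu)$.

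So the paper's approach bounds the Eisenstein and cusp-form pieces by the \emph{same} Kloosterman-sum estimate, avoiding any explicit $L$-function computation; your ``harmless linear combination'' for level-lifting of Eisenstein series is in fact the main new ingredient, and your separate $L$-function bound for the continuous spectrum, as written, is too weak.
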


The proof of Proposition~\ref{AhgDunEsstBd} uses Iwaniec's averaging method as in \cite{ahlgrendunn}. One important property is the relationship between the Fourier coefficients in different levels. This is not hard via the inner product for Maass cusp forms, but not clear for the continuous spectrum. Here we apply arguments in \cite{Young2019JNT} for the calculations.

For the remaining part of this section we identify the levels. Let $\langle \cdot,\cdot\rangle_{(N)}$ denote the Petersson inner product over the fundamental domain $\Gamma_0(N)\setminus \HH$. For integer $q\geq 1$, let $w_q\defeq \begin{psmallmatrix}
	\sqrt{q}&0\\0&1/\sqrt q
\end{psmallmatrix}\in \SL_2(\R)$.

	Suppose that $\nu^{({S})}$ is a weight $k=\pm\frac12$ multiplier on $\Gamma_0(S)$ and $\nu^{(T)}$ is a weight $k$ multiplier on $\Gamma_0(T)$. Suppose that there exist positive integers $q$ and $T$ such that
	\begin{equation}\label{Level lifting q S T defined in mathcal A}
		f(z)\in \mathcal{A}_k(S,\nu^{(S)})\quad \Rightarrow \quad f(qz)=(f|_kw_q)(z)\in \mathcal{A}_k(T,\nu^{(T)}).
	\end{equation}
	Note that this relation implies $qS|T$. 
	
	For $L\in \{S,T\}$, let $\rho_j^{(L)}(n)$ denote the Fourier coefficients of an orthonormal basis $\{v_j^{(L)}(\cdot)\}$ of $\LEigenform_{k}(L,\nu^{(L)})$. For each singular cusp $\mathfrak{a}$ of $(\Gamma_0(L),\nu^{(L)})$, let $\Ea^{(L)}(\cdot,s)$ be the associated Eisenstein series. Let $\varphi_{\mathfrak{a}n}^{(L)}(\frac12+ir)$ and $\rho_{\mathfrak{a}}^{(L)}(n,r)$ be defined as in \eqref{FourierExpEsst}. 

Let $\mathcal{V}_j^{(T)}(z)=v_j^{(S)}(qz)$ and $\mathcal{E}_\mathfrak{a}^{(T)}(z,s)=\Ea^{(S)}(qz,s)$. So $\mathcal{V}_j^{(T)}$ and $\mathcal{E}_\mathfrak{a}^{(T)}(\cdot,\frac12+ir)$ are eigenfunctions corresponding to the discrete and continuous spectrum of $\Delta_k$, respectively. Let $n_{(L)}\defeq n-\alpha_{\nu^{(L)}}$ for $n\in \Z$ and suppose $\alpha_{\nu^{(T)}}=0$. Then $qn_{(S)}\in \Z$ and 
\begin{equation}\label{Level lifting: relation of Fourier coeff}
	\rho_j^{(S)}(n)=\mathcal{P}_j^{(T)}(qn_{(S)}) \quad \text{and}\quad \rho_\mathfrak{a}^{(S)}(n,r)=\mathcal{P}_\mathfrak{a}^{(T)}(qn_{(S)},r),
\end{equation}
where $\mathcal{P}_j^{(T)}(n)$ and $\mathcal{P}_\mathfrak{a}^{(T)}(n,r)$ are the Fourier coefficients of $\mathcal{V}_j^{(T)}$ and $\mathcal{E}_\mathfrak{a}^{(T)}(\cdot,\frac12+ir)$ as in {\eqref{FourierExpMaassEigenform}} and \eqref{FourierExpEsst}, respectively. 

Since $d\mu(z)=\frac{dxdy}{y^2}$ is invariant under $\GL_2^+(\R)$, we can denote $I(S,T)$ as the normalizing constant such that 
\[\langle f(q\cdot),g(q\cdot)\rangle_{(T)}=I(S,T)\langle f,g\rangle_{(S)},\quad \text{for all } f,g\in\Lform_k(S,\nu^{(S)}).   \]
So $I(S,T)$ is the index $[\Gamma_0(S):\Gamma_0(T)]$. The set 
\[\left\{I(S,T)^{-\frac12}\mathcal{V}_j^{(T)}(\cdot): r_j\text{ of }\Gamma_0(S)\right\}\]
is an orthonormal subset in $\LEigenform_k(T,\nu^{(T)})$ and can be expand to a orthonormal basis of $\LEigenform_k(T,\nu^{(T)})$ as
\begin{equation}\label{Alternative basis discrete spec}
	\left\{\frac{\mathcal{V}_j^{(T)}(\cdot)}{I(S,T)^{\frac12}}: r_j\text{ of }\Gamma_0(S)\right\}\bigcup \left\{w_j^{(T)}(\cdot): r_j\text{ of }\Gamma_0(T)\right\},
\end{equation}
where each $w_j^{(T)}$ is a linear combination of $v_j^{(T)}$ from the standard basis. Let the Fourier coefficients of $w_j$ be denoted as $\rho_j^{\mathrm{comp}}(n)$, which is the corresponding linear combination of $\{\rho_j^{(T)}(n)\}_j$.

For the continuous spectrum, as in \cite[(8.1)]{Young2019JNT} we let $\mathscr{E}_r(L)$ be the finite dimensional space
\[\mathscr{E}_r(L)\defeq \spann \{\Ea^{(L)}(\cdot,\tfrac12+ir):\ \text{singular }\mathfrak{a} \text{ of } (\Gamma_0(L),\nu^{(L)})\}. \]
This $\mathscr{E}_r(L)$ is also the subspace of eigenfunctions in the continuous spectrum of $\Delta_k$ with eigenvalue $\lambda=\frac14+r^2$. We define the formal inner product
\begin{equation}
	\langle \cdot,\cdot\rangle_{(L)}^{\mathrm{Eis}}:\quad \left\langle \Ea^{(L)}(\cdot,\tfrac12+ir),\ \Eb^{(L)}(\cdot,\tfrac12+ir)\right \rangle_{(L)}^{\mathrm{Eis}}=4\pi \delta_{\mathfrak{a}\mathfrak{b}}^{(L)},
\end{equation}
where $\delta_{\mathfrak{a}\mathfrak{b}}^{(L)}=1$ if cusps $\mathfrak{a}$ and $\mathfrak{b}$ are $\Gamma_0(L)$-equivalent and  $\delta_{\mathfrak{a}\mathfrak{b}}^{(L)}=0$ otherwise. This inner product is extended sesquilinearly as a inner product on $\mathscr{E}_r(L)$, which means it is conjugate linear at the first entry and linear at the second entry.

Recall \eqref{FourierExpansionEsstSeriesDiffCusps} for the Fourier expansion of Eisenstein series at cusps. Since $\mathcal{E}_{\mathfrak{a}}^{(T)}(\cdot,\frac12+ir)=\Ea^{(S)}(q\cdot,\frac12+ir)\in \mathscr{E}_r(T)$ where $\mathfrak{a}$ is a singular cusp of $\Gamma_0(S)$, we can write 
\begin{equation}\label{expand Eq in level T}
	\mathcal{E}_{\mathfrak{a}}^{(T)}(z,\tfrac12+ir)=\sum_{\substack{\mathrm{singular\ }\mathfrak{b}\\\mathrm{of\ }\Gamma_0(T)}}c_{\mathfrak{a}}(\mathfrak{b})\Eb^{(T)}(z,\tfrac12+ir). 
\end{equation}
Let 
\[I(S,T,\mathfrak{a})\defeq \sum_{\substack{\mathrm{singular\ }\mathfrak{b}\\\mathrm{of\ }\Gamma_0(T)}} |c_{\mathfrak{a}}(\mathfrak{b})|^2,\]
then we have 
\[\left\langle \mathcal{E}_{\mathfrak{a}}^{(T)}(\cdot,\tfrac12+ir),\ \mathcal{E}_{\mathfrak{a}}^{(T)}(\cdot,\tfrac12+ir)\right \rangle_{(T)}^{\mathrm{Eis}}=4\pi I(S,T,\mathfrak{a}). \]
On the other hand, we know that $\mathcal{E}_{\mathfrak{a}}^{(T)}(\cdot,\frac12+ir)=\Ea^{(S)}(\cdot,\frac12+ir)|_kw_q$. Then the Fourier expansion of $\mathcal{E}_{\mathfrak{a}}^{(T)}(\cdot,\frac12+ir)$ at the cusp $\mathfrak{b}$ has a non-zero $y^{\frac12+ir}$ term, if, and only if, the Fourier expansion of $\Ea^{(S)}(\cdot,\frac12+ir)$ at the cusp $w_q\mathfrak{b}$ has a non-zero $y^{\frac12+ir}$ term. Since $\Eb^{(T)}(\cdot,\frac12+ir)$ only has non-zero $y^{\frac12+ir}$ term at cusps equivalent to $\mathfrak{b}$ on $\Gamma_0(T)$, we can rewrite \eqref{expand Eq in level T} as
\begin{equation}\label{Expand Eaqz on Level T}
	\mathcal{E}_{\mathfrak{a}}^{(T)}(z,\tfrac12+ir)=\sum_{\substack{\mathrm{singular\ }\mathfrak{b}\mathrm{\ of\ }\Gamma_0(T)\\w_q\mathfrak{b}\mathrm{\ equivalent\ to\ } \mathfrak{a}\mathrm{\ on\ }\Gamma_0(S)}}c_{\mathfrak{a}}(\mathfrak{b})\Eb^{(T)}(z,\tfrac12+ir). 
\end{equation}
The above sum is well defined. In fact, if two cusps $\ma_1$ and $\ma_2$ are nonequivalent on $\Gamma_0(S)$, then $w_q^{-1}\ma_1$ and $w_q^{-1}\ma_2$ are nonequivalent on $\Gamma_0(T)$. This is easily verified with \eqref{Level lifting q S T defined in mathcal A} by $qS|T$ and
\[\gamma^{(T)}\in \Gamma_0(T)\quad \Rightarrow \quad w_q\gamma^{(T)}w_q^{-1}\in \Gamma_0(S). \]
Then the sums in \eqref{Expand Eaqz on Level T} for $\mathcal{E}_{\ma_1}^{(T)}$ and $\mathcal{E}_{\ma_2}^{(T)}$ are on disjoint singular cusps of $\Gamma_0(T)$. 

Therefore, by the orthogonality in $\mathcal{E}_r(T)$ with respect to $\langle\cdot,\cdot\rangle_{(T)}^{\text{Eis}}$, 
\begin{equation}
	\left\langle \mathcal{E}_{\mathfrak{a}_1}^{(T)}(\cdot,\tfrac12+ir),\ \mathcal{E}_{\mathfrak{a}_2}^{(T)}(\cdot,\tfrac12+ir)\right \rangle_{(T)}^{\mathrm{Eis}}=4\pi I(S,T,\ma_1) \delta_{\ma_1\ma_2}^{(S)}.  
\end{equation}
Now we can expand the set 
\[\left\{I(S,T,\mathfrak{a})^{-\frac12}\mathcal{E}_{\mathfrak{a}}^{(T)}(\cdot,\tfrac12+ir): \text{ singular }\ma \text{ of }\Gamma_0(S)\right\}\]
to an orthonormal basis of $\mathcal{E}_r(T)$ with respect to $\langle\cdot,\cdot\rangle_{(T)}^{\text{Eis}}$ as
\begin{equation}\label{Alternative basis cont spec}
\left\{\frac{\mathcal{E}_{\mathfrak{a}}^{(T)}(\cdot,\tfrac12+ir)}{I(S,T,\ma)^{\frac12}}: \text{ singular }\ma \text{ of }\Gamma_0(S)\right\}\bigcup \left\{ F_{\ma}^{(T)}(\cdot,\tfrac12+ir): \text{ singular }\ma \text{ of }\Gamma_0(T)\right\},
\end{equation}
where each $F_{\ma}^{(T)}$ is some linear combination of $\Ea^{(T)}$. We denote the Fourier coefficients of $F_{\ma}^{(T)}$ as $\rho_{\mathfrak{a}}^{\mathrm{comp}}(n,r)$ and $\varphi_{\mathfrak{a}n}^{\mathrm{comp}}(\tfrac12+ir)$ as \eqref{FourierExpEsst}, which are corresponding linear combinations of $\rho_{\mathfrak{a}}^{(T)}(n,r)$ or $\varphi_{\mathfrak{a}n}^{(T)}(\tfrac12+ir)$.
 
Recall the standard expansion for $h\in \mathcal{B}_k(T,\nu^{(T)})$ \cite[Theorem~2.1]{DFI12}
\begin{align}\label{Level T decomposition original}
	\begin{split}
	h(z)&=\!\!\!\sum_{r_j\text{ of }\Gamma_0(T)}\!\!\!\langle h,v_j^{(T)}\rangle_{(T)} v_j^{(T)}+\!\!\!\sum_{\substack{\mathrm{singular\ }\mathfrak{a}\\ \mathrm{\ of\ }\Gamma_0(T)}}\!\!\!\frac1{4\pi}\int_{-\infty}^\infty \left\langle h,\Ea^{(T)}(\cdot,\tfrac12+ir)\right\rangle_{(T)} \Ea^{(T)}(\cdot,\tfrac12+ir) dr\\
	&=:h_{\mathrm D}(z)+h_{\mathrm C}(z). 
	\end{split}
\end{align}
For the discrete spectrum, we have an alternative orthonormal basis \eqref{Alternative basis discrete spec} hence another expansion for $h_{\mathrm D}(z)$. For the continuous spectrum, \cite[Proposition~8.2]{Young2019JNT} ensures that the above expansion fo $h_{\mathrm C}(z)$ is invariant with an alternative basis \eqref{Alternative basis cont spec} (where we write Young's notation $\langle F,F\rangle_{\mathrm{Eis}}=4\pi$ explicitly here to be consistent with our notations). Now we can deduce another expansion for $h$: 

\begin{align}\label{Level T Decomposition alternative}
\begin{split}
h(z)&=h_{\mathrm D}(z)+h_{\mathrm C}(z)\\
&=\!\sum_{r_j \mathrm{\ of\ }\Gamma_0(S)}\left\langle h,\frac{v_j(q\cdot)}{I(S,T)^{\frac12}}\right\rangle_{(T)}\frac{v_j(qz)}{I(S,T)^{\frac12}}
+\!\!\!\sum_{r_j \mathrm{\ of\ }\Gamma_0(T)}\left\langle h,w_j\right\rangle_{(T)}w_j(z)\\
&+\!\sum_{\substack{\mathrm{singular\ }\mathfrak{a}\\ \mathrm{\ of\ }\Gamma_0(S)}}\frac1{4\pi }\int_{-\infty}^\infty \left\langle h, \frac{E_{\mathfrak{a}}^{(S)}\(q\cdot, \frac12+ir\)}{I(S,T,\ma)^{\frac12}}\right\rangle_{(T)} \frac{E_{\mathfrak{a}}^{(S)}\(qz, \frac12+ir\)}{I(S,T,\ma)^{\frac12}}dr\\
&+\!\sum_{\substack{\mathrm{singular\ }\mathfrak{a}\\ \mathrm{\ of\ }\Gamma_0(T)}}\frac1{4\pi }\int_{-\infty}^\infty \left\langle h, F_{\mathfrak a}^{(T)}(\cdot,\tfrac12+ ir)\right\rangle_{(T)} \;F_{\mathfrak a}^{(T)}(z,\tfrac12+ ir)dr. 
\end{split}
\end{align}

We now show that $I(S,T,\ma)=I(S,T)$. Let $h\in \mathcal{B}_{k}(S,\nu^{(S)})$ be orthogonal to the discrete spectrum, i.e. $h_{\mathrm D}=0$. The standard spectral expansion of $h$ at level $S$ gives
\[h(z)=h_{\mathrm C}(z)=\sum_{\substack{\mathrm{singular\ }\mathfrak{a}\\ \mathrm{\ of\ }\Gamma_0(S)}}\frac1{4\pi }\int_{-\infty }^{\infty}\left\langle h, E_{\mathfrak{a}}^{(S)}\(\cdot, \tfrac12+ir\)\right\rangle_{(S)} E_{\mathfrak{a}}^{(S)}\(z, \tfrac12+ir\)dr.  \]
Especially, 
\begin{equation}\label{Expand h(qz) to prove I(S,T,a)=I(S,T)}
	h(qz)=\sum_{\substack{\mathrm{singular\ }\mathfrak{a}\\ \mathrm{\ of\ }\Gamma_0(S)}}\frac1{4\pi }\int_{-\infty }^{\infty}\left\langle h, E_{\mathfrak{a}}^{(S)}\(\cdot, \tfrac12+ir\)\right\rangle_{(S)} E_{\mathfrak{a}}^{(S)}\(qz, \tfrac12+ir\)dr.  
\end{equation}
However, $H(\cdot)=h(q\cdot)$ is in $\mathcal{B}_{k}(T,\nu^{(T)})$ (and is still orthogonal to the discrete spectrum) with spectral expansion $H_{\mathrm C}(z)$ as \eqref{Level T Decomposition alternative}. As we have shown the orthogonality of \eqref{Alternative basis cont spec} in $\mathscr{E}_r(T)$ under $\langle\cdot,\cdot\rangle_{\mathrm{Eis}}^{(T)}$, the spectral expansion of $H(\cdot)$ has to be unique on a subset of the basis \eqref{Alternative basis cont spec}. Comparing \eqref{Expand h(qz) to prove I(S,T,a)=I(S,T)} and \eqref{Level T Decomposition alternative} we have
\[\left\langle h(q\cdot), \frac{E_{\mathfrak{a}}^{(S)}\(q\cdot, \frac12+ir\)}{I(S,T,\ma)}\right\rangle_{(T)}=\langle h, E_{\mathfrak{a}}^{(S)}\(\cdot, \tfrac12+ir\)\rangle_{(S)}\quad \Rightarrow\quad I(S,T,\ma)=I(S,T).  \]

Now we are ready to state the formula connecting the Fourier coefficients of different level eigenforms. 
For $\sigma=\re s>1$, $t\in \R$ and $n>0$, we compute the inner product
\begin{equation}\label{product of two Poincare series}
	\left\langle U_{qn_{(S)}}^{(T)}(\cdot,\sigma+\tfrac{it}2), U_{qn_{(S)}}^{(T)}\(\cdot,\sigma-\tfrac{it}2\) \right \rangle_{(T)} 
\end{equation}
as \cite[Lemma~2]{Proskurin2005}. The results are the same if we apply the above decomposition \eqref{Level T Decomposition alternative} for $h(z)=U_{qn_{(S)}}^{(T)}(z,\sigma\pm\tfrac{it}2)$ and if we apply the standard spectral decomposition \eqref{Level T decomposition original}. Recall the notation
\begin{align*}
\Lambda(\sigma+it,\sigma-it,r)=\la\Gamma\(\sigma-\tfrac12+i(t+r)\)\ra^2\,\la\Gamma\(\sigma-\tfrac12+i(t-r)\)\ra^2.
\end{align*}

For $\sigma>1$, we can get two results of \eqref{product of two Poincare series}, as on the right hand side of \cite[(30)]{Proskurin2005}, by the two different expansions mentioned above. The following equation is the identity between such two results, where we recall \eqref{Level lifting: relation of Fourier coeff} for the relation in Fourier coefficients: 
\begin{align}\label{Level Shifting before with phi and Lambda}
\begin{split}
	\sum_{r_j \text{ of } \Gamma_0(S)}&I(S,T)^{-1}
	|\rho_j^{(S)}(n)|^2 \Lambda\(\sigma+\tfrac{it}2,\sigma-\tfrac{it}2,r_j\)\\
	+\sum_{\substack{\mathrm{singular\ }\mathfrak{a}\\\mathrm{of\ }\Gamma_0(S)}} &I(S,T)^{-1}
	\int_{-\infty}^\infty \frac{|\varphi_{\mathfrak{a}n}^{(S)}(\tfrac12+ir)|^2\Lambda\(\sigma+\frac{it}2,\sigma-\frac{it}2,r\)dr}{4n\,\Gamma\(\frac12+\frac k2 -ir\)\Gamma\(\frac12+\frac k2 -	ir\)}\\
	&+\sum_{r_j \text{ of } \Gamma_0(T)}
	|\rho_j^{\mathrm{comp}}(qn_{(S)})|^2 \Lambda\(\sigma+\tfrac{it}2,\sigma-\tfrac{it}2,r_j\)\\
	&+\sum_{\substack{\mathrm{singular\ }\mathfrak{a}\\\mathrm{of\ }\Gamma_0(T)}} 
	\int_{-\infty}^\infty \frac{|\varphi_{\mathfrak{a},qn_{(S)}}^{\mathrm{comp}}(\tfrac12+ir)|^2\Lambda\(\sigma+\frac{it}2,\sigma-\frac{it}2,r\)dr}{4qn_{(S)}\,\Gamma\(\frac12+\frac k2 -ir\)\Gamma\(\frac12+\frac k2 -	ir\)}\\
	= \sum_{r_j \text{ of } \Gamma_0(T)}& |\rho_j^{(T)}(qn_{(S)})|^2\Lambda\(\sigma+\tfrac{it}2,\sigma-\tfrac{it}2,r_j\)\\
	&+\sum_{\substack{\mathrm{singular\ }\mathfrak{a}\\\mathrm{of\ }\Gamma_0(T)}} 
	\int_{-\infty}^\infty \frac{|\varphi_{\mathfrak{a},qn_{(S)}}^{(T)}(\tfrac12+ir)|^2\Lambda\(\sigma+\frac{it}2,\sigma-\frac{it}2,r\)dr}{4qn_{(S)}\,\Gamma\(\frac12+\frac k2 -ir\)\Gamma\(\frac12+\frac k2 -ir\)}. 
	\end{split}
\end{align}
With the help of \eqref{Eis Fourier Coeff rho and phi} on the notations, we have proved a lemma regarding the shifting of levels: 
\begin{lemma}\label{Level Shifting lemma with ch 2 pi t + ch 2 pi r }
Given $T>S>0$ and $S|T$, with the notations above, for all $t\in \R$ and $\sigma>1$ we have \eqref{Level Shifting before with phi and Lambda}. 
In addition, with \eqref{Eis Fourier Coeff rho and phi} we can also write the terms involving $\varphi_{\mathfrak{a}\ell}(\frac12+ir)$ as those with $\rho_{\mathfrak{a}}(\ell,r)$ and we omit the duplicated formula here. 
\end{lemma}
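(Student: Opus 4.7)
The plan is to compute the inner product \eqref{product of two Poincare series} in two different ways and equate the results. Following the computation of Proskurin \cite[Lemma~2, (30)]{Proskurin2005}, for $\sigma>1$ the inner product $\langle U_{qn_{(S)}}^{(T)}(\cdot,\sigma+\tfrac{it}2),\, U_{qn_{(S)}}^{(T)}(\cdot,\sigma-\tfrac{it}2)\rangle_{(T)}$ can be evaluated using Parseval's identity \eqref{ParsevalId} applied to any orthonormal basis of $\Lform_k(T,\nu^{(T)})$ together with the standard formulas for $\xi_\ma(r,U_m(\cdot,s))$ and its conjugate. This yields a spectral expansion whose summands have the shape appearing in \eqref{Level Shifting before with phi and Lambda} (up to the common factors $(4\pi)^{2-2\sigma}\,\tilde m^{1-\sigma}|\tilde n|^{1-\sigma}/\Gamma(\sigma-\tfrac k2+\tfrac{it}2)\Gamma(\sigma+\tfrac k2-\tfrac{it}2)$, which will be cancelled once we equate the two expressions).

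First, I would apply Parseval with the standard orthonormal basis of $\Lform_k(T,\nu^{(T)})$ and the standard basis $\{\Ea^{(T)}(\cdot,\tfrac12+ir)\}$ for the continuous spectrum, indexed by the singular cusps of $\Gamma_0(T)$. This directly produces the right hand side of \eqref{Level Shifting before with phi and Lambda}. Next, I would apply Parseval again, but this time using the alternative orthonormal basis constructed in \eqref{Alternative basis discrete spec} for the discrete spectrum and the alternative basis \eqref{Alternative basis cont spec} for the continuous spectrum. The invariance of the spectral expansion under this change of basis is guaranteed, on the discrete side by standard orthonormal expansion and on the continuous side by \cite[Proposition~8.2]{Young2019JNT}, which I have already invoked in the construction of \eqref{Level T Decomposition alternative}. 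The $I(S,T)^{-1}$ and $I(S,T,\ma)^{-1}$ factors arise because the lifted eigenfunctions $v_j^{(S)}(q\cdot)$ and $\Ea^{(S)}(q\cdot,\tfrac12+ir)$ have norm squared $I(S,T)$ and $4\pi I(S,T,\ma)$ respectively, rather than being unit vectors; the identity $I(S,T,\ma)=I(S,T)$ established above lets us use a single normalizing constant.

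The Fourier coefficients of these lifted basis elements are governed by \eqref{Level lifting: relation of Fourier coeff}, so the $n_{(S)}$-th Fourier coefficient of $v_j^{(S)}(qz)$ at infinity is $\rho_j^{(S)}(n)$, and similarly for the Eisenstein contribution; on the complementary part of the bases we obtain the $\rho_j^{\text{comp}}$ and $\varphi_{\ma,qn_{(S)}}^{\text{comp}}$ terms. Grouping these four families gives exactly the left hand side of \eqref{Level Shifting before with phi and Lambda}. Equating the two evaluations of \eqref{product of two Poincare series} and cancelling the common normalizing factors produces \eqref{Level Shifting before with phi and Lambda}.

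The only mild obstacle is bookkeeping: one must verify that the lifted Eisenstein series $\mathcal{E}_\ma^{(T)}(\cdot,\tfrac12+ir)$ corresponding to nonequivalent cusps $\ma_1,\ma_2$ of $\Gamma_0(S)$ are orthogonal in $\langle\cdot,\cdot\rangle_{(T)}^{\text{Eis}}$, which is where the expansion \eqref{Expand Eaqz on Level T} (restricting the sum to cusps $\mathfrak{b}$ of $\Gamma_0(T)$ with $w_q\mathfrak{b}$ equivalent to $\ma$ on $\Gamma_0(S)$) is used. Once this orthogonality and the identification $I(S,T,\ma)=I(S,T)$ are in hand, the two spectral expansions of \eqref{product of two Poincare series} can be compared termwise and the lemma follows. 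Convergence for $\sigma>1$ is ensured by the absolute convergence of both $U_m^{(T)}(\cdot,s)$ in $\Lform_k$ and the spectral sums, as in \cite{Proskurin2005}.
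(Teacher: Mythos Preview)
Your proposal is correct and follows essentially the same approach as the paper: the proof is precisely the discussion preceding the lemma statement, namely computing the inner product \eqref{product of two Poincare series} twice---once with the standard spectral decomposition on level $T$ and once with the alternative decomposition \eqref{Level T Decomposition alternative}---and equating the results using \eqref{Level lifting: relation of Fourier coeff}, the orthogonality from \eqref{Expand Eaqz on Level T}, and the identity $I(S,T,\ma)=I(S,T)$. The paper in fact places all of this argument before the lemma and states the lemma as a summary, so your write-up matches it closely.
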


\subsection{Proof of Proposition~\ref{AhgDunEsstBd}}
	We still use superscript $\cdot^{(N)}$ to identify the level and should be careful on it. The notations $\rho_j$ and $\rho_{\mathfrak{a}}$ in the statement of Proposition~\ref{AhgDunEsstBd} are on level $N$ and among the proof we utilize two more different levels. When $B\tilde n$ is square-free, \cite[Theorem~4.3]{ahlgrendunn} gives the bound 
	\begin{equation}\label{originalBoundinAhgDun}
		x^{k \sgn  \tilde {n}}|  \tilde {n}|\sum_{|r_j|\leq x} \frac{|\rho_j^{(N)}(n)|^2}{\ch \pi r_j}\ll_{\nu,\ep} |\tilde {n}|^{\frac{131}{294}+\ep} x^{3}. 
	\end{equation}
Our proposition generalizes the above bound. It suffices to prove the general case involving $u_n$ with $\tilde n>0$ and $k=\pm \frac12$, where the $\tilde n<0$ case follows from conjugation by \eqref{tildeNConj} and \eqref{FourierCoeffConj}.

Following the notation in \cite[\S 5.2]{ahlgrendunn}, we can take the fundamental discriminant $D$ to be even and $M \equiv 0\Mod 8$ as a positive integer with $D | M$. 
	Let $P$ be a positive parameter (chosen later to be $n^{\frac 17}$) and 
	\[\mathcal{Q}=\mathcal{Q}(n,M,P)\defeq\{pM:\ p\text{ prime},\ P<p\leq 2P,\ \text{and } p\nmid 2nM\}. \]
	We take any $pM$ in $\mathcal{Q}$. In \cite[p.1698]{ahlgrendunn}, they require the property that when $\{v_j^{(M)}\}$ is an orthonormal subset of $\LEigenform_{k}(M,\nu')$, then $\{[\Gamma_0(M):\Gamma_0(pM)]^{-\frac12}v_j^{(M)}\}$ is an orthonormal subset of $\LEigenform_{k}(pM,\nu')$. This is easily verified by the inner product of Maass cusp forms, but we cannot take the inner product of two Eisenstein series. We will use the discussion above in this section, especially Lemma~\ref{Level Shifting lemma with ch 2 pi t + ch 2 pi r }, to interpret the estimates between level $M$ and level $pM$ involving Eisenstein series in detail. 
	
	The following lines sketch the proof in \cite[Section~5]{ahlgrendunn}. Let 
	\[\Phi(u)\defeq \frac18\sqrt{\frac \pi 2}u^{-\frac12}J_{\frac 92}(u),\quad u\geq 0.\]
	where $J_s$ is the $J$-Bessel function. We have $\Phi(0)=\Phi'(0)=0$. For $s\in \C$, define
	\[\widetilde{\Phi}(s)\defeq \int_0^\infty J_s(u)\Phi(u)\frac{du}u\]
	and
	\[\widehat{\Phi}(r)\defeq \frac{i|\Gamma(\frac{1+k}2+ir)|^2}{2\pi^2\sh \pi r}\(\widetilde{\Phi}(2ir)\cos \pi(\tfrac k2+ir)-\widetilde{\Phi}(-2ir)\cos \pi\(\tfrac k2-ir\) \). \]
	As in \cite[above (5.13)]{ahlgrendunn}, $\widehat{\Phi}(r)>0$ for $r\in \R\cup i(0,\frac14]$. 
	At level $L=M$ or $pM$, define 
	\[\Lform_{\widehat{\Phi}}^{(L)}(n,n)\defeq 4\pi |n|\sum_{j} |\rho_{j}^{(L)}(n)|^2\frac{\widehat{\Phi}(r_j)}{\ch \pi r_j}\]
	where the sum runs over the discrete spectrum of $\Delta_k$ on $\Gamma_0(L)$
	and
	\[\Mform_{\widehat{\Phi}}^{(L)}(n,n)\defeq 4\pi |n|\sum_{\mathfrak{a}} \frac1{4\pi}\int_{-\infty}^\infty |\rho_{\mathfrak{a}}^{(L)}(n,t)|^2\frac{\widehat{\Phi}(t)}{\ch \pi t}dt\]
	where the sum runs over singular cusps of $\Gamma_0(M)$. At level $L=N$, we define $\Lform_{\widehat{\Phi}}^{(N)}$ and $\Mform_{\widehat{\Phi}}^{(N)}$ with $|n|$ changed to $|\tilde n|$ because $\alpha_{\nu}$ might be non-zero. Equation \cite[before (5.14)]{ahlgrendunn} (also \cite[Theorem~2.5]{DFI12} as the original reference)
	\begin{equation} \label{5.14AhgDun}
		\Lform_{\widehat{\Phi}}^{(pM)}(n,n)+\Mform_{\widehat{\Phi}}^{(pM)}(n,n)=e(-\tfrac k4)\mathcal K_{{\Phi}}^{(pM)}(n,n)-\mathcal N_{\widecheck{\Phi}}^{(pM)}(n,n)
	\end{equation}
	was used to conclude 
	\[\Lform_{\widehat{\Phi}}^{(pM)}(n,n)\leq e(-\tfrac k4)\mathcal K_{{\Phi}}^{(pM)}(n,n)-\mathcal N_{\widecheck{\Phi}}^{(pM)}(n,n)\]
	by dropping the positive term $\Mform_{\widehat{\Phi}}^{(pM)}(n,n)$ and \cite[Theorem~4.3]{ahlgrendunn} was proved by estimating the average of the right hand side. Here we must retain this term. Recall the index $[\Gamma_0(M):\Gamma_0(pM)]\leq p+1\ll P$: 
\begin{proposition}\label{EstmtEisensteinMincluded}
	With the notations above in this subsection, for $pM\in \mathcal{Q}$ we have
	\begin{equation}\label{EisensteinSeriesOrthogonality}
		\Lform_{\widehat{\Phi}}^{(pM)}(n,n)+\Mform_{\widehat{\Phi}}^{(pM)}(n,n)\geq  \frac{\Lform_{\widehat{\Phi}}^{(M)}(n,n)+\Mform_{\widehat{\Phi}}^{(M)}(n,n)}{[\Gamma_{0}(M):\Gamma_0(pM)]}\gg \frac{\Lform_{\widehat{\Phi}}^{(M)}(n,n)+\Mform_{\widehat{\Phi}}^{(M)}(n,n)}{P}. 
	\end{equation}
\end{proposition}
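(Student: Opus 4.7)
The plan is to split into the two claimed inequalities. The second one, $[\Gamma_0(M):\Gamma_0(pM)]^{-1}\gg P^{-1}$, is immediate: for $pM\in\mathcal{Q}$ the prime $p$ is coprime to $M$, so $[\Gamma_0(M):\Gamma_0(pM)]=p+1\leq 2P+1$. The substantive task is the first inequality, which I would establish by bounding the discrete and continuous contributions separately, each by the factor $I(M,pM)^{-1}=[\Gamma_0(M):\Gamma_0(pM)]^{-1}$, leveraging positivity of $\widehat{\Phi}$ on $\R\cup i(0,\tfrac14]$.

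I would invoke the level-lifting framework of Section~8 with $S=M$, $T=pM$, and $q=1$. Since $\nu'=(\tfrac{|D|}{\cdot})\nu_\theta$ or $(\tfrac{|D|}{\cdot})\overline{\nu_\theta}$ satisfies $\nu'(\begin{psmallmatrix}1&1\\0&1\end{psmallmatrix})=1$, we have $\alpha_{\nu'}=0$, so $n_{(M)}=n_{(pM)}=n$. For the discrete spectrum, for each spectral parameter $r$ the eigenspace $\LEigenform_k(M,\nu',r)$ sits inside $\LEigenform_k(pM,\nu',r)$ (using $\Gamma_0(pM)\subset\Gamma_0(M)$), and an orthonormal basis $\{v_j^{(M)}\}$ of the former yields an orthonormal subset $\{I(M,pM)^{-1/2}v_j^{(M)}\}$ of the latter, which extends to an orthonormal basis. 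Basis-invariance of $\sum_\alpha|\rho_\alpha(n)|^2$ under a unitary change of orthonormal basis of a single $r$-eigenspace gives
\[
\sum_{r_j=r,\,(pM)}|\rho_j^{(pM)}(n)|^2\;\geq\;I(M,pM)^{-1}\sum_{r_j=r,\,(M)}|\rho_j^{(M)}(n)|^2.
\]
Multiplying by $4\pi|n|\,\widehat{\Phi}(r)/\ch\pi r\geq 0$ and summing over $r$ yields $\Lform_{\widehat{\Phi}}^{(pM)}(n,n)\geq I(M,pM)^{-1}\Lform_{\widehat{\Phi}}^{(M)}(n,n)$.

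For the continuous spectrum, I would use the alternative $\langle\cdot,\cdot\rangle^{\mathrm{Eis}}_{(pM)}$-orthonormal basis of $\mathscr{E}_r(pM)$ from \eqref{Alternative basis cont spec}, comprising $\{I(M,pM)^{-1/2}E_\ma^{(M)}(\cdot,\tfrac12+ir)\}_{\ma\text{ sing of }M}$ together with $\{F_\ma^{(pM)}(\cdot,\tfrac12+ir)\}_{\ma\text{ sing of }pM}$. Parseval on the finite-dimensional space $\mathscr{E}_r(pM)$ (basis-invariance of $\sum_\alpha|\rho_{F_\alpha}(n,r)|^2$ under unitary change of orthonormal basis, after the common normalization $(4\pi)^{-1/2}$) gives
\[
\sum_{\mathfrak{b}\text{ sing of }pM}|\rho_\mathfrak{b}^{(pM)}(n,r)|^2 \;=\; I(M,pM)^{-1}\!\!\sum_{\ma\text{ sing of }M}|\rho_\ma^{(M)}(n,r)|^2+\sum_{\ma}|\rho_\ma^{\mathrm{comp}}(n,r)|^2.
\]
Discarding the last nonnegative sum and integrating against the nonnegative kernel $|n|\,\widehat{\Phi}(r)/\ch\pi r$ yields $\Mform_{\widehat{\Phi}}^{(pM)}(n,n)\geq I(M,pM)^{-1}\Mform_{\widehat{\Phi}}^{(M)}(n,n)$. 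Adding the two bounds completes the first inequality.

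The main obstacle is the Eisenstein bookkeeping: one must carefully track the normalizations $\|E_\mathfrak{b}^{(pM)}\|_{\mathrm{Eis},(pM)}^2=4\pi$ and $\|E_\ma^{(M)}\|_{\mathrm{Eis},(pM)}^2=4\pi\,I(M,pM,\ma)=4\pi\,I(M,pM)$ (the latter identity being the content of the discussion immediately preceding the proposition) to confirm that the change of basis between the standard and alternative bases of $\mathscr{E}_r(pM)$ is genuinely unitary, which is precisely what validates the Parseval identity displayed above.
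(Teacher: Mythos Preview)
Your proposal is correct and reaches the same conclusion as the paper, but by a more direct route. The paper invokes Lemma~\ref{Level Shifting lemma with ch 2 pi t + ch 2 pi r } (the $\Lambda$-weighted identity \eqref{Level Shifting before with phi and Lambda, M to pM} coming from the inner product of two Poincar\'e series) with $q=1$, then follows Proskurin by multiplying by a suitable kernel in $t$, integrating, and passing to the limit $\sigma\to 1^+$ to obtain a single $\widehat\Phi$-weighted identity at level $pM$ whose left side contains the level-$M$ quantities divided by the index together with nonnegative ``comp'' terms; dropping the latter by positivity of $\widehat\Phi$ gives the inequality. You instead work pointwise in the spectral parameter: for each $r$ you use finite-dimensional Parseval (basis-invariance of $\sum_\alpha|\ell_n(e_\alpha)|^2$ for the linear functional ``$n$-th Fourier coefficient'') on the eigenspace $\LEigenform_k(pM,\nu',r)$ and on $\mathscr{E}_r(pM)$ equipped with $\langle\cdot,\cdot\rangle^{\mathrm{Eis}}_{(pM)}$, using the alternative orthonormal bases \eqref{Alternative basis discrete spec} and \eqref{Alternative basis cont spec}. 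This immediately gives the inequality at each $r$, and then you integrate against the nonnegative weight $\widehat\Phi(r)/\ch\pi r$. Your argument is more elementary and transparent, avoiding the Proskurin integration step and the limit $\sigma\to 1^+$; the paper's approach has the virtue of recycling the machinery already set up in Lemma~\ref{Level Shifting lemma with ch 2 pi t + ch 2 pi r } and making the connection to Proskurin's trace formula explicit. Both rely on exactly the same structural input: the construction of the alternative bases in Section~8 and the identification $I(M,pM,\mathfrak{a})=I(M,pM)$.
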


\begin{proof}[Proof of Proposition~\ref{EstmtEisensteinMincluded}]

First we apply \eqref{Level Shifting before with phi and Lambda} in Lemma~\ref{Level Shifting lemma with ch 2 pi t + ch 2 pi r } with levels $M$ and $pM$. Here we take $q=1$, $\nu^{(S)}=\nu^{(T)}=(\frac{|D|}\cdot)\nu_\theta^{2k}$ for $k=\pm \frac12$ and $I(M,pM)=[\Gamma_0(M):\Gamma_0(pM)]$ to get
\begin{align}\label{Level Shifting before with phi and Lambda, M to pM}
	\begin{split}
		\frac1{[\Gamma_0(M):\Gamma_0(pM)]}\Bigg(&\sum_{r_j \text{ of } \Gamma_0(M)}
		|\rho_j^{(M)}(n)|^2 \Lambda\(\sigma+\tfrac{it}2,\sigma-\tfrac{it}2,r_j\)\\
		+&\sum_{\substack{\mathrm{singular\ }\mathfrak{a}\\\mathrm{of\ }\Gamma_0(M)}} 
		\int_{-\infty}^\infty \frac{|\varphi_{\mathfrak{a}n}^{(M)}(\tfrac12+ir)|^2\Lambda\(\sigma+\frac{it}2,\sigma-\frac{it}2,r\)dr}{4n\,\Gamma\(\frac12+\frac k2 -ir\)\Gamma\(\frac12+\frac k2 -	ir\)}\Bigg)\\
		+&\sum_{r_j \text{ of } \Gamma_0(pM)}
		|\rho_j^{\mathrm{comp}}(n)|^2 \Lambda\(\sigma+\tfrac{it}2,\sigma-\tfrac{it}2,r_j\)\\
		+&\sum_{\substack{\mathrm{singular\ }\mathfrak{a}\\\mathrm{of\ }\Gamma_0(pM)}} 
		\int_{-\infty}^\infty \frac{|\varphi_{\mathfrak{a}n}^{\mathrm{comp}}(\tfrac12+ir)|^2\Lambda\(\sigma+\frac{it}2,\sigma-\frac{it}2,r\)dr}{4n\,\Gamma\(\frac12+\frac k2 -ir\)\Gamma\(\frac12+\frac k2 -	ir\)}\\
		= \sum_{r_j \text{ of } \Gamma_0(pM)} |\rho_j^{(pM)}&(n)|^2\Lambda\(\sigma+\tfrac{it}2,\sigma-\tfrac{it}2,r_j\)\\
		+&\sum_{\substack{\mathrm{singular\ }\mathfrak{a}\\\mathrm{of\ }\Gamma_0(pM)}} 
		\int_{-\infty}^\infty \frac{|\varphi_{\mathfrak{a}n}^{(pM)}(\tfrac12+ir)|^2\Lambda\(\sigma+\frac{it}2,\sigma-\frac{it}2,r\)dr}{4n\,\Gamma\(\frac12+\frac k2 -ir\)\Gamma\(\frac12+\frac k2 -ir\)}. 
	\end{split}
\end{align}
Following Proskurin, we multiply a function of $t$ defined by \cite[(53)]{Proskurin2005} on both sides of the above formula, integrate $t$ from $0$ to $\infty$, and pass to the limit $\sigma\rightarrow 1^+$. In addition we take the test function $\varphi$ in \cite[(34)]{Proskurin2005} to be our $\Phi$ here. What we get simplifies to (see \cite[(83)]{Proskurin2005})
\begin{align*}
	\frac{1}{[\Gamma_{0}(M):\Gamma_0(pM)]}\Bigg(&\Lform_{\widehat{\Phi}}^{(M)}(n,n)+\Mform_{\widehat{\Phi}}^{(M)}(n,n)+4\pi n\!\!\!\!\!\!\sum_{r_j\text{ of }\Gamma_0(pM)} \!\!\!\!\!\!|b_{j}^{\mathrm{comp}}(n)|^2\frac{\widehat{\Phi}(r_j)}{\ch \pi r_j}\\
	&+\;	n\sum_{\mathfrak{a}\text{ of }\Gamma_0(pM)}\int_{-\infty}^\infty |b_{\mathfrak{a}}^{\mathrm{comp}}(n,r)|^2\frac{\widehat{\Phi}(r)}{\ch \pi r}dr\Bigg)\\
	&=\Lform_{\widehat{\Phi}}^{(pM)}(n,n)+\Mform_{\widehat{\Phi}}^{(pM)}(n,n). 
\end{align*}
Our notations are consistent with $\varphi_{\mathfrak{a}\ell}$ in \cite{Proskurin2005}, $b_\mathfrak{a}(\ell,r)$ in \cite{DFI12}, and $\widehat{\cdot}$ in both the references. Since \cite[below (5.13)]{ahlgrendunn}
\[\widehat{\Phi}(r)>0\quad \text{for}\quad r\in \R\cup i(0,\tfrac14],\]
we can drop the extra terms with superscript “comp” by positivity to get the desired inequality. 
\end{proof} 

With Proposition~\ref{EstmtEisensteinMincluded}, since $|\mathcal{Q}|\asymp\frac{P}{\log P}$, summing \eqref{5.14AhgDun} over $\mathcal{Q}$ gives
\begin{equation}\label{EstmtRightHandSideKN}
	\frac1{\log P}\(\Lform_{\widehat{\Phi}}^{(M)}(n,n)+\Mform_{\widehat{\Phi}}^{(M)}(n,n)\)\ll \sum_{pM\in \mathcal{Q}}\Big|\mathcal K_{{\Phi}}^{(pM)}(n,n)\Big|+\sum_{pM\in \mathcal{Q}}\Big|\mathcal N_{\widecheck{\Phi}}^{(pM)}(n,n)\Big|. 
\end{equation}
When $n$ is square-free, it was shown in \cite[\S 5.3-5.5]{ahlgrendunn} that the right hand side of \eqref{EstmtRightHandSideKN} is bounded by $O({n}^{\frac{131}{294}+\ep}x^3)$, where $P=n^{\frac 17}\Rightarrow \log P\ll n^\ep$.

Next, we will prove the bound on the right hand side of \eqref{EstmtRightHandSideKN} when $n$ is not square-free. The estimates in \cite[\S 5.3-5.4]{ahlgrendunn} depend on their Proposition 5.2, which is the only place that requires $n$ to be square-free. That proposition is a special case of \cite[(19)]{Waibel2017FourierCO}, so we apply the general estimate from Waibel's paper here. For $\mu\in\{-1,0,1\}$, $n\in \mathbb N$ and $x\geq 1$, define
	\[K_{\mu}^{(N)}(n,x)\defeq\sum_{N|c\leq x}\frac{S(n,n,c,\nu)}ce\(\frac{2\mu n}c\). \]
	\vspace{-10px}
\begin{proposition}[{\cite[(19)]{Waibel2017FourierCO}}]
	Suppose that $N\equiv 0\Mod 8$, that $\mu\in\{-1,0,1\}$, that $n>0$ is factorized as	$n=tu^2w^2$ where $t$ is square-free, $u|N^\infty$ and $(w,N)=1$, then 
	\[\sum_{Q\in \mathcal Q}|K_{\mu}^{(Q)}(n,x)|\ll_{N,\ep}\(xP^{-\frac12}+xun^{-\frac12}+(x+n)^{\frac 58}\(x^{\frac14}P^{\frac 38}+n^{\frac 18}x^{\frac 18}P^{\frac 14}\)\)(nx)^\ep. \]
\end{proposition}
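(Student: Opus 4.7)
The plan is to prove this via the Salié-type representation of half-integral-weight Kloosterman sums with the theta multiplier, following the strategy of Waibel, which generalizes the square-free case treated in \cite{ahlgrendunn}. Since $\nu=(\tfrac{|D|}{\cdot})\nu_\theta^{2k}$ and $8\mid M$, for each admissible modulus $c\equiv 0\pmod{M}$ the Kloosterman sum $S(n,n,c,\nu)$ admits a Salié evaluation of the shape $S(n,n,c,\nu)=\sqrt{c}\sum_{\xi}\chi_c(\xi)\,e(\tfrac{\xi n}{c})$, where $\chi_c$ is an explicit character-type weight and $\xi$ ranges over square roots of a product of $n$'s modulo $c$. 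Substituting this into $K_\mu^{(Q)}(n,x)$ turns the bound into an estimate for double exponential sums in $c$ and $\xi$, multiplied by $\tfrac{1}{\sqrt{c}}$.

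The first step is to split $c=Qc'$ and expand Salié at level $Q=pM$; crucially, since $p\nmid 2nM$, the contribution at the prime $p$ factors cleanly. This separation lets us isolate the conductor part $u$ (the $N^\infty$-part of $n$) inside a residue sum that is independent of $p$, giving rise to the term $xun^{-1/2}$ when $u$ is large. Next I would apply Poisson summation in the $c'$ variable to obtain dual sums over frequencies $h$, where the diagonal $h=0$ contributes $xP^{-1/2}$ after averaging over $p\in[P,2P]$ by orthogonality of additive characters, and the off-diagonal contributes sums of incomplete Gauss-type sums. The off-diagonal piece is handled by a Weil bound on the resulting character sums together with a large-sieve argument over the prime modulus $p$, yielding the combined term $(x+n)^{5/8}(x^{1/4}P^{3/8}+n^{1/8}x^{1/8}P^{1/4})$.

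The main obstacle is the careful combinatorial bookkeeping of the factorization $n=tu^2w^2$: the square part $u^2$ concentrated on primes dividing $M$ does not oscillate under the relevant additive characters, and produces a non-trivial diagonal contribution $xun^{-1/2}$ which cannot be absorbed into the Weil-type off-diagonal estimate. One must isolate these non-oscillatory residues and estimate them trivially while preserving square-root cancellation for the generic $w$-part coprime to $M$. This is precisely the point where Waibel's argument in \cite[Section~3]{Waibel2017FourierCO} refines the Sarnak--Tsimerman/Ahlgren--Andersen treatment, and the exponents $\tfrac{5}{8}$, $\tfrac{3}{8}$, $\tfrac{1}{4}$ arise from optimizing a Cauchy--Schwarz split against the bilinear sum obtained after Poisson.

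Finally, summing over $Q=pM\in\mathcal{Q}$ and using $|\mathcal{Q}|\asymp P/\log P$ with the triangle inequality (the prime-by-prime bounds already incorporate an average via the large sieve for the off-diagonal) yields the stated estimate with the implied constant depending on $N$ through $M$ and $D$, and an $\ep$-loss from the divisor estimates $\sigma_0(c)\ll c^\ep$. Since the statement is quoted verbatim from \cite[(19)]{Waibel2017FourierCO}, the proof reduces to verifying that our normalization of $S(n,n,c,\nu)$ matches Waibel's (which is immediate from the Salié formula) and that the hypothesis $M\equiv 0\pmod{8}$ is satisfied, both of which are built into Definition~\ref{Admissibility}.
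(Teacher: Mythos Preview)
The paper does not prove this proposition at all: it is stated as a direct citation of \cite[(19)]{Waibel2017FourierCO} and is used as a black box to replace \cite[Proposition~5.2]{ahlgrendunn} in the non-square-free case. There is therefore no ``paper's own proof'' to compare your proposal against; the paper's entire treatment is the sentence ``we apply the general estimate from Waibel's paper here'' followed by the statement.

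Your final paragraph correctly identifies this, so your proposal is internally consistent: you acknowledge the result is quoted and that the task reduces to checking normalizations. The preceding paragraphs, where you sketch a Sali\'e evaluation followed by Poisson summation and a large-sieve argument over the primes $p$, are a plausible high-level outline of how such bounds are obtained in the Sarnak--Tsimerman framework, but they are not a proof and contain no verifiable details. In particular, your attribution of the term $xP^{-1/2}$ to a Poisson diagonal and of $xun^{-1/2}$ to non-oscillatory residues at primes dividing $M$ is heuristic; Waibel's actual argument routes through a careful decomposition of the Sali\'e sum according to the square content of $n$ relative to the level, and the exponents arise from a specific Cauchy--Schwarz application whose details you have not reproduced. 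If the intent was to supply an independent proof, substantially more would be needed; if the intent was merely to cite Waibel, then your last paragraph alone suffices and matches what the paper does.
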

Note that in the proof, Waibel chose $P$ to be $n^{\frac17}$. By using the above proposition in each place of \cite[\S 5.3-5.4]{ahlgrendunn} where \cite[Proposition~5.2]{ahlgrendunn} was applied, we obtain new estimates that are recorded here:
	\begin{flalign*}
		&\text{\cite[(5.19)]{ahlgrendunn}}& &\ll \(\ell^{-\frac12}n^{\frac 37}+\ell^{-\frac14}n^{\frac{23}{56}}+\ell^{-2}u\)(\ell n)^\ep.&& \\
		&\text{\cite[(5.22)]{ahlgrendunn}}& &\ll \(\ell^{\frac{11}6}n^{\frac 37}+\ell^{\frac{25}{12}}n^{\frac{23}{56}}+\ell^{\frac13} u\)(\ell n)^\ep.&&\\
		&\text{\cite[(5.24)]{ahlgrendunn}}& &\ll n^{\frac 37+\frac 56\beta+\ep}+n^{\frac{23}{56}+\frac{13}{12}\beta+\ep}+ u n^{-\frac 23\beta+\ep}.&&\\
		&\text{\cite[after balancing (5.26)]{ahlgrendunn}}& &\ll n^{\frac{137}{294}+\ep}+ u n^{\frac1{147}+\ep}. &&\\
		&\text{\cite[(5.28)]{ahlgrendunn}}&\!\!\!\!\!\!\!\!\!\!\!\!\!\!\!\!\!\!\!\!\!\!\!\!\!\!\!\!\!\!\!\!\sum_{pM\in \mathcal{Q}}\Big|\mathcal K_{{\Phi}}^{(pM)}(n,n)\Big|&\ll n^{\frac{131}{294}+\ep}+ u n^{-\frac2{147}+\ep}. &&\\
		&\text{\cite[(5.29)]{ahlgrendunn}}&\!\!\!\!\!\!\!\! \!\!\!\!\!\!\!\!\!\!\!\!\!\!\!\!\!\!\!\!\!\!\!\!\sum_{pM\in \mathcal{Q}}\Big|\mathcal N_{\widecheck{\Phi}}^{(pM)}(n,n)\Big| &\ll n^{\frac{3}7+\ep}+ u n^{\ep}. &&
	\end{flalign*}
Based on the last two estimates and \eqref{EstmtRightHandSideKN}, we derive
\begin{equation}\label{Estmt Right Hand Side K+N new bound include u}
	\Lform_{\widehat{\Phi}}^{(M)}(n,n)+\Mform_{\widehat{\Phi}}^{(M)}(n,n)\ll (n^{\frac{131}{294}}+u)n^\ep.
\end{equation}

Finally we transfer the bound to level $N$. Apply Lemma~\ref{Level Shifting lemma with ch 2 pi t + ch 2 pi r } again with level $N$ and level $M$, where we have $\nu^{(N)}=\nu$,  $\nu^{(M)}=\nu'=(\frac{|D|}\cdot)\nu_\theta^{2k}$, $q=B$ and $qn_{(N)}=B\tilde n$. For $\ell\in \{m,n\}$, we factor $|B\tilde \ell|=t_\ell u_\ell^2m_\ell^2$ in the statement of Proposition~\ref{AhgDunEsstBd}. Here
\[\rho_{j}^{(N)}(n)=\rho_{j}^{(M)}(B\tilde n)\quad \text{and}\quad \rho_{\mathfrak{a}}^{(N)}(n,r)=\rho_{\mathfrak{a}}^{(M)}(B\tilde n,r)\]
for $r_j$ a spectral parameter of $\Delta_k$ on $\Gamma_0(N)$ and $\mathfrak{a}$ a singular cusp of $(\Gamma_0(N),\nu)$. As in the proof of Proposition~\ref{EstmtEisensteinMincluded} above, we integrate \eqref{Level Shifting before with phi and Lambda} to a result involving $\widehat{\Phi}$, drop the extra terms as $\widehat{\Phi}(r)>0$ for $r\in \R\cup i(0,\frac14]$, and get
\begin{align}
	\begin{split}
		&|\tilde n|\Bigg(\sum_{r_j\text{ of }\Gamma_0(N)} \frac{|\rho_j^{(N)}(n)|^2}{\ch \pi r_j} \widehat{\Phi}(r_j) + \sum_{\substack{\mathrm{singular\ }\mathfrak{a}\\\mathrm{of\ }\Gamma_0(N)}}\frac1{4\pi}\int_{-\infty}^{\infty} \frac{|\rho_{\mathfrak{a}}^{(N)}(n,r)|^2}{\ch \pi r}\widehat{\Phi}(r)dr\Bigg)\\
	 &=\Lform_{\widehat{\Phi}}^{(N)}(n,n)+\Mform_{\widehat{\Phi}}^{(N)}(n,n)\ll_\nu\Lform_{\widehat{\Phi}}^{(M)}(B\tilde n,B\tilde n)+\Mform_{\widehat{\Phi}}^{(M)}(B\tilde n,B\tilde n)\\
		&\ll_{\nu,\ep}\(|B\tilde{n}|^{\frac{131}{294}}+u_n \) |B\tilde n|^\ep\\
		&\ll_{\nu,\ep}\(|\tilde{n}|^{\frac{131}{294}}+u_n\)|\tilde n|^\ep. 
	\end{split}
	\end{align}

Following from the same argument as \cite[\S 5.5, (5.31-33)]{ahlgrendunn}, when $x\geq 1$, $k=\pm\frac12$ and $\tilde n>0$ we have
\[\widehat{\Phi}(r)^{-1}\ll x^{3-k}\quad {\text{for\ }} |r|\leq  x \]
and get Proposition~\ref{AhgDunEsstBd}. When $n<0$ it follows from the relationship \eqref{tildeNConj} and \eqref{FourierCoeffConj}. 

\section{Proof of theorems on sums of Kloosterman sums}

\subsection{Proof of Theorems~\ref{mainThm}-\ref{mainThm3}}

We prove Theorem~\ref{mainThm3} first. The other two are deduced from Theorem~\ref{mainThm3} and their proofs are given at the end of this subsection. 
For simplicity let 
\[A(m,n)\defeq\(\tilde{m}^{\frac{131}{294}}+u_m\)^{\frac12} \(|\tilde{n}|^{\frac{131}{294}}+u_n\)^{\frac12}\ll| \tilde{m}  \tilde{n}|^{\frac{131}{588}}
+  \tilde{m}^{\frac{131}{588}} u_n^{\frac12}
+ |\tilde{n}|^{\frac{131}{588}} u_m^{\frac12}+
(u_mu_n)^{\frac12} \]
and
\begin{align}\label{AuDef}
	\begin{split}
		A_u(m,n)&\defeq A(m,n)^{\frac14}  | \tilde{m}  \tilde{n}|^{\frac3{16}}\\
		&\ll | \tilde{m}  \tilde{n}|^{\frac{143}{588}}+\tilde{m}^{\frac{143}{588}}|\tilde{n}|^{\frac 3{16}}\,u_n^{\frac18}+\tilde{m}^{\frac 3{16}}u_m^{\frac18}|\tilde{n}|^{\frac{143}{588}}+| \tilde{m}  \tilde{n}|^{\frac3{16}}(u_mu_n)^{\frac18}.
	\end{split}
\end{align}
Moreover, all implicit constants for  bounds in this section depend on $\nu$ and $\ep$ and we drop the subscripts unless specified. Recall the notations in Settings \ref{conditionATDelta} and \ref{conditionphi}. For the exceptional spectrum $r_j\in i(0,\frac14]$ of the Laplacian $\Delta_k$ on $\Gamma=\Gamma_0(N)$, we have $2 \im r_\Delta\leq \theta$ assuming $H_\theta$ \eqref{HTheta} by Proposition~\ref{specParaBoundWeightHalf} and $\im r_j$ has a positive lower bound $\underline t>0$ depending on $N$. 

\begin{proposition}\label{PrereqPropGEN}
	With the same setting as Theorem~\ref{mainThm3}, when $2x\geq A_u(m,n)^2$, we have 
	\begin{align}\label{PrereqPropGENequation}
		\begin{split}
			\sum_{\substack{x< c\leq 2x\\N|c}} \frac{S(m,n,c,\nu)}{c} -\sum_{r_j\in i (0,\frac14]}(2^{2s_j-1}-1)&\tau_j(m,n)\frac{x^{2s_j-1}}{2s_j-1}\\
			&\ll \(x^{\frac 16}+A_u(m,n)\)|\tilde{m}\tilde{n}x|^\ep.
		\end{split}
	\end{align}
\end{proposition}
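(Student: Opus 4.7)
The plan is to apply the Kuznetsov trace formula (Theorem~\ref{Mixed-sign case trace formula}) with the test function $\phi$ from Setting~\ref{conditionPhiNew5.4}, choosing $\delta=\tfrac13$ so that $T\asymp x^{2/3}$. Since $\phi(a/c)=1$ for $c\in[x,2x]$ and is supported in $c\in[x-T,2x+2T]$, the dyadic sum on the left of \eqref{PrereqPropGENequation} agrees with $2i^{k}\sum_{N\mid c}\frac{S(m,n,c,\nu)}{c}\phi(a/c)$ up to contributions from the transition windows $[x-T,x]$ and $[2x,2x+2T]$. These windows have length $\asymp T\gg x^{2/3}$, so condition~(2) of Definition~\ref{Admissibility} controls them by $\tfrac{T}{\sqrt{x}}|\tilde{m}\tilde{n}x|^{\ep}\asymp x^{1/6}|\tilde{m}\tilde{n}x|^{\ep}$, producing the $x^{1/6}$ term in the error.

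Next I extract the main term. By Theorem~\ref{Mixed-sign case trace formula}, the spectral side is a sum over $r_{j}$ plus an integral over the continuous spectrum. For each exceptional eigenvalue $r_{j}=it$ with $t\in(0,\tfrac14]$, Lemma~\ref{mainphiLemma} gives $\tfrac{1}{\cos\pi t}\check{\phi}(it)=\tfrac{2^{2t-1}(2^{2t}-1)}{2t}\Gamma(2t)(x/a)^{2t}+O(x^{2t-\delta}a^{-2t}+1)$; multiplying by the prefactor $8\sqrt{|\tilde{m}\tilde{n}|}\,i^{k}\overline{\rho_{j}(m)}\rho_{j}(n)/\cosh\pi r_{j}$ and using $\cosh i\pi t=\cos\pi t$, a direct rearrangement with $2s_{j}-1=2t$ and $a=4\pi\sqrt{|\tilde{m}\tilde{n}|}$ matches the term $(2^{2s_{j}-1}-1)\tau_{j}(m,n)x^{2s_{j}-1}/(2s_{j}-1)$ in \eqref{PrereqPropGENequation}. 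Since there are only finitely many such $r_{j}$ (with $\im r_{j}\geq\underline{t}>0$ depending on $N$), the $O$-error from Lemma~\ref{mainphiLemma} is absorbed by applying Proposition~\ref{AhgDunEsstBd} at $x=1$ together with Cauchy--Schwarz $|\overline{\rho_{j}(m)}\rho_{j}(n)|\leq\tfrac12(|\rho_{j}(m)|^{2}+|\rho_{j}(n)|^{2})$; this yields a contribution $\ll A_{u}(m,n)|\tilde{m}\tilde{n}|^{\ep}$, well within the desired bound under the assumption $2x\geq A_{u}(m,n)^{2}$.

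Finally I bound the remaining non-exceptional spectrum, both discrete and continuous, by a dyadic decomposition. On $|r_{j}|\leq 1$ the bounds \eqref{Ris0}--\eqref{RLessThan1} give $\check{\phi}\ll|\tilde{m}\tilde{n}x|^{\ep}$, and combining Cauchy--Schwarz with Proposition~\ref{AhgDunEsstBd} at $x=1$ yields an error $\ll A_{u}(m,n)|\tilde{m}\tilde{n}x|^{\ep}$. For the dyadic range $|r_{j}|\in[R,2R]$ with $R\geq 1$, I use the three regimes of \eqref{Rlarge}: exponential decay for $R\leq a/(8x)$, $\check{\phi}\ll R^{-1}$ in the middle window $a/(8x)\leq R\leq a/x$, and $\check{\phi}\ll\min(R^{-3/2},R^{-5/2}x/T)$ for $R\geq\max(a/x,1)$. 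On each piece I bound the coefficient sum by Proposition~\ref{AdsDukeEsstBd} in the large-$R$ regime (where $R^{2}$ dominates) and by Proposition~\ref{AhgDunEsstBd} in the small-$R$ regime (where $|\tilde{n}|^{131/294}+u_{n}$ dominates); the continuous spectrum integrals are handled identically, which is precisely why we needed the Eisenstein parts of those two propositions. Summing the geometric dyadic contributions and optimizing the transition point balances the $R^{2}$ and $|\tilde{n}|^{131/294}+u_{n}$ bounds against the decay of $\check{\phi}$, yielding a total error $\ll (A(m,n)^{1/4}|\tilde{m}\tilde{n}|^{3/16}+x^{1/6})|\tilde{m}\tilde{n}x|^{\ep}=(A_{u}(m,n)+x^{1/6})|\tilde{m}\tilde{n}x|^{\ep}$, as required. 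The delicate step—and the principal obstacle—is this dyadic balancing: matching the $x$-exponents coming from $\check{\phi}$ against the $|\tilde{m}\tilde{n}|$- and $u_{m},u_{n}$-exponents from the two coefficient estimates, while checking that the hypothesis $2x\geq A_{u}(m,n)^{2}$ is exactly what makes $A_{u}(m,n)$ dominate uniformly across all spectral ranges.
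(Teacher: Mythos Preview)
Your outline matches the paper's argument in its essentials: smooth via Setting~\ref{conditionPhiNew5.4} with $\delta=\tfrac13$, absorb the transition windows by condition~(2) of Definition~\ref{Admissibility}, extract the exceptional terms via Lemma~\ref{mainphiLemma}, and estimate the remaining spectrum range-by-range using \eqref{Rlarge} together with Propositions~\ref{AdsDukeEsstBd} and~\ref{AhgDunEsstBd}. Two places need tightening.

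First, the inequality $|\overline{\rho_j(m)}\rho_j(n)|\leq\tfrac12(|\rho_j(m)|^2+|\rho_j(n)|^2)$ is the wrong tool: after multiplying by $\sqrt{|\tilde m\tilde n|}$ and invoking Proposition~\ref{AhgDunEsstBd} you get terms like $\sqrt{|\tilde n|/\tilde m}\,(\tilde m^{131/294}+u_m)$, which are not $\ll A(m,n)$ when $\tilde m$ and $|\tilde n|$ are unbalanced. The paper instead applies Proposition~\ref{AhgDunEsstBd} to $m$ and to $n$ separately and multiplies the square roots (equivalently, Cauchy--Schwarz over $r_j$), giving exactly $A(m,n)|\tilde m\tilde n|^\ep$. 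Relatedly, for $r_j=\tfrac{i}{4}$ the error from Lemma~\ref{mainphiLemma} is $O(x^{1/2-\delta})=O(x^{1/6})$, not $O(A_u(m,n))$; it belongs with the $x^{1/6}$ term.

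Second, and more substantively, your sketch is too thin on the middle window $a/(8x)\leq r\leq a/x$. Using only Proposition~\ref{AhgDunEsstBd} on both variables there produces $A(m,n)(a/x)^2$, while using only Proposition~\ref{AdsDukeEsstBd} on both leaves a term $\tilde m^{1/4}+|\tilde n|^{1/4}$ that is not dominated by $A_u(m,n)$. The paper's actual treatment (\eqref{GeneralRLarge2Step1}--\eqref{GeneralRLarge2Step2}) applies Proposition~\ref{AdsDukeEsstBd} to one of $m,n$ and Proposition~\ref{AhgDunEsstBd} to the other, then symmetrizes via $\min(B,C)\leq\sqrt{BC}$ to get $A(m,n)(a/x)^{3/2}+|\tilde m\tilde n|^{1/8}A(m,n)^{1/2}(a/x)^{1/2}$. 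It is precisely here, in \eqref{WhenXLargeAu}, that the hypothesis $2x\geq A_u(m,n)^2$ enters, giving $a/x\ll|\tilde m\tilde n|^{1/8}A(m,n)^{-1/2}$ and collapsing both terms to $A_u(m,n)$.
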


We first prove that Proposition~\ref{PrereqPropGEN} implies Theorem~\ref{mainThm3}. For each $j$, let $\rho_j(n)$ denote the coefficients of an orthonormal basis $\{v_j(\cdot)\}$ of $\LEigenform_{k}(N,\nu)$. For each singular cusp $\mathfrak{a}$ of $\Gamma=\Gamma_0(N)$, let $\Ea(\cdot,s)$ be the associated Eisenstein series and $\rho_{\mathfrak{a}}(n,r)$ be defined as in \eqref{FourierExpEsst}.

Recall the definition of $\tau_j(m,n)$ in Theorem~\ref{mainThm} and $2\im r_j=2s_j-1 \in(0,\frac12]$ and $\underline{t}>0$ as the lower bound of $\im r_j$ depending on $\nu$. 
The sum to be estimated is 
\begin{equation}\label{ToEstmtThm1.4}
	\sum_{\substack{N|c\leq X}} \frac{S(m,n,c,\nu)}{c} -\sum_{r_j\in i (0,\frac14]}\tau_j(m,n)\frac{X^{2s_j-1}}{2s_j-1}. 
\end{equation} 
For $r_j\in i(0,\frac14]$, by Proposition~\ref{AhgDunEsstBd} we have
\begin{equation}\label{Bound for tau_j}
	\frac{\tau_j(m,n)}{2s_j-1}\ll |\rho_j(m)\rho_j(n)||\tilde{m}\tilde{n}|^{1-s_j}\ll A(m,n)|\tilde{m}\tilde{n}|^{\frac12-s_j+\ep}.
\end{equation}
When $X\ll A_u(m,n)^2$, since $A(m,n)\leq 2|\tilde{m}\tilde{n}|^{\frac14}$, 
\begin{align}\label{taujestGEN}
\begin{split}
	\tau_j(m,n)\frac{X^{2s_j-1}}{2s_j-1}\ll  A(m,n)&|\tilde{m}\tilde{n}|^{\frac12-s_j+\ep}A_u(m,n)^{4s_j-2}\\
	&= A(m,n)^{s_j+\frac12}|\tilde{m}\tilde{n}|^{\frac18-\frac14s_j}\ll A_u(m,n)  . 
\end{split}
\end{align}
So in this case we get Theorem~\ref{mainThm3} where the $\tau_j$ terms are absorbed in the errors. 

When $X\geq A_u(m,n)^2$, the segment for summing Kloosterman sums on $1\leq c\leq A_u(m,n)^2$ contributes a $O_{\nu,\ep}(A_u(m,n)|\tilde{m}\tilde{n}|^{\ep})$ by condition (2) of Definition \ref{Admissibility}. The segment for $A_u(m,n)^2\leq c\leq X$ can be broken into no more than $O(\log X)$ dyadic intervals $x<c\leq 2x$ with $A_u(m,n)^2\leq x\leq \frac X2$ and we use Proposition~\ref{PrereqPropGEN} for both the Kloosterman sum and the $\tau_j$ terms. In summing dyadic intervals, for each $r_j\in i(0,\frac14]$, we get
\begin{align*}
\sum_{\ell=1}^{\ceil{\log_2 \(X/A_u(m,n)^2\)}}&\frac{(2^{2s_j-1}-1)\tau_j(m,n)}{2s_j-1}
\(\frac{X}{2^\ell}\)^{2s_j-1}\\
&\qquad =\frac{\tau_j(m,n)}{2s_j-1}X^{2s_j-1}
\( 1 - 2^{(1-2s_j)\ceil{\log_2 \(X/A_u(m,n)^2\)}}\).
\end{align*}
The difference between the above quantity and $\tau_j(m,n)\dfrac{X^{2s_j-1}}{2s_j-1}$ in \eqref{ToEstmtThm1.4} is
\begin{equation}\label{differenceBetweenEstmtedTauAndTrueTauGEN}
	{\tau_j(m,n)}\frac{X^{2s_j-1}}{2s_j-1}\cdot 2^{(1-2s_j)\ceil{\log_2 \(X/A_u(m,n)^2\)}}\ll \frac{\tau_j(m,n)}{2s_j-1}A_u(m,n)^{4s_j-2}\ll A_u(m,n). 
\end{equation}
by \eqref{Bound for tau_j}. In conclusion, for $X\geq A_u(m,n)^2$ we get
\begin{align*}
	&\sum_{\substack{N|c\leq X}}\frac{S(m,n,c,\nu)}{c} -\sum_{r_j\in i (0,\frac14]}\tau_j(m,n)\frac{X^{2s_j-1}}{2s_j-1}\\
	&=\sum_{\substack{A_u(m,n)^2<c\leq X}} \frac{S(m,n,c,\nu)}{c}-\sum_{r_j\in i (0,\frac14]}\tau_j(m,n)\frac{X^{2s_j-1}}{2s_j-1}+O(A_u(m,n)|\tilde{m}\tilde{n}|^{\ep})\\
	&=\sum_{\ell=1}^{\ceil{\log_2 \(X/A_u(m,n)^2\)}}\(\sum_{\substack{\frac{X}{2^\ell}<c\leq \frac{X}{2^{\ell-1}}}} \frac{S(m,n,c,\nu)}{c} -\sum_{r_j\in i (0,\frac14]}\frac{(2^{2s_j-1}-1)\tau_j(m,n)}{2s_j-1}\(\frac{X}{2^\ell}\)^{2s_j-1}\)\\
	&\quad + O(A_u(m,n)|\tilde{m}\tilde{n}|^{\ep})\\
	&\ll \(X^{\frac 16}+A_u(m,n)\)|\tilde{m}\tilde{n}X|^\ep,
\end{align*}
where the second equality follows from \eqref{differenceBetweenEstmtedTauAndTrueTauGEN} and the last inequality is by Proposition~\ref{PrereqPropGEN}. 

It remains to prove Proposition~\ref{PrereqPropGEN}. For $r_j\in i(0,\frac14]$, by Proposition~\ref{AhgDunEsstBd} we have
\[\sqrt{|\tilde{m}\tilde{n}|}\; \overline{\rho_j(m)}\rho_j(n)\ll A(m,n)|\tilde{m}\tilde{n}|^{\ep}. \]
Applying Lemma \ref{mainphiLemma} where $2t_j=2\im r_j=2s_j-1$, recalling the definition of $\tau_j$ in Theorem~\ref{mainThm} and $a=4\pi \sqrt{|\tilde m \tilde n|}$ in Setting \ref{conditionATDelta}, we get
\begin{align}\label{taujReference5.4GEN}
	\begin{split}
	2i^k\cdot &4\sqrt{|\tilde{m}\tilde{n}|}\;\frac{\overline{\rho_j(m)}\rho_j(n)}{\ch \pi r_j}\check{\phi}(r_j)\\
	&=(2^{2s_j-1}-1)\tau_j(m,n)\frac{x^{2s_j-1}}{2s_j-1}+O\(A(m,n)|\tilde{m}\tilde{n}|^\ep\(|\tilde{m}\tilde{n}|^{-t_j}x^{2t_j-\delta}+1\)\). 
	\end{split}\end{align}
The error term is $O(A(m,n)|\tilde{m}\tilde{n}|^{\ep})$ when $2t_j\leq \delta$ and is $O(x^{\frac12-\delta}|\tilde m\tilde n|^\ep)$ when $t_j=\frac 14$. Thanks to Proposition~\ref{specParaBoundWeightHalf} we can choose $\delta>\theta$ ($\delta=\frac13>\frac7{64}$ in the end) and $t_j<\frac14$ implies $2t_j\leq \theta<\delta$. With the help of \eqref{taujReference5.4GEN} we break up the left hand side of \eqref{PrereqPropGENequation} as  
\begin{align}\label{mainDiffrenceGEN}
	\begin{split}
		&\left|\sum_{\substack{x<c\leq 2x\\N|c}} \frac{S(m,n,c,\nu)}{c} -\sum_{r_j\in i (0,\frac14]}(2^{2s_j-1}-1)\tau_j(m,n)\frac{x^{2s_j-1}}{2s_j-1} \right|\\
		\leq& \left|\sum_{\substack{x<c\leq 2x\\N|c}} \frac{S(m,n,c,\nu)}{c} -\sum_{N|c>0} \frac{S(m,n,c,\nu)}{c}\phi\(\frac ac\)\right| +O\(\(x^{\frac12-\delta}+A(m,n)\)|\tilde{m}\tilde{n}|^{\ep}\)\\
		&+ \left|\sum_{N|c>0} \frac{S(m,n,c,\nu)}{c}\phi\(\frac ac\)-8i^k\sqrt{|\tilde{m}\tilde{n}|}\sum_{r_j\in i (0,\frac14]}\frac{\overline{\rho_j(m)}\rho_j(n)}{\ch \pi r_j}\check{\phi}(r_j)\right|\\
		=:&\;S_1+O\(\(x^{\frac12-\delta}+A(m,n)\)|\tilde{m}\tilde{n}|^{\ep}\)+S_2. 
	\end{split}
\end{align}
Recall $T\asymp x^{1-\delta}$. The first sum $S_1$ above can be estimated by condition (2) of Definition \ref{Admissibility} as
\begin{align}\label{TraceSmoothingGEN}
	\begin{split}
		S_1\leq\sum_{\substack{x-T\leq c\leq x\\2x\leq c\leq 2x+2T\\N|c}}\frac{|S(m,n,c,\nu)|}{c}\ll_{\delta,\ep} x^{\frac12-\delta}|  \tilde{m}  \tilde{n}x|^\ep.
	\end{split}
\end{align}
We then prove a bound for $S_2$. Following from the trace formula \eqref{traceFormula}, 
\[S_2=8\sqrt{|\tilde{m}\tilde{n}|}\left|\sum_{r_j\geq 0}\frac{\overline{\rho_j(m)}\rho_j(n)}{\ch \pi r_j}\check{\phi}(r_j)+\!\!\!\sum_{\mathrm{singular\ }\mathfrak{a}}\!\!\!\frac{1}{4\pi}\int_{-\infty}^\infty  \overline{\rho_{\mathfrak{a}}\(m,r\)}\rho_{\mathfrak{a}}\(n,r\)\frac{ \check{\phi}(r) }{\ch \pi r} dr \right|. \]

When estimating $S_2$, we focus on the discrete spectrum $r_j\geq 0$, because the bounds provided by Proposition~\ref{AdsDukeEsstBd} and Proposition~\ref{AhgDunEsstBd} for $r_j\in I$ for any interval $I$ are the same as those provided for for $|r|\in I$ in the continuous spectrum. 
For $r\in [0,1)$, we apply Proposition~\ref{AhgDunEsstBd}, \eqref{Ris0} and \eqref{RLessThan1} to get
\begin{equation}\label{GeneralTraceRisSmall}
	\sqrt{ \tilde{m} | \tilde{n}|}\sum_{r\in [0,1)} \left|\frac{\overline{\rho_j(m)}\rho_j(n)}{\ch \pi r_j} \check{\phi}(r_j)\right| 
	\ll  A(m,n) |  \tilde{m}  \tilde{n}|^\ep.
\end{equation}

For $r\in[1,\frac a{8x})$, we apply Proposition~\ref{AhgDunEsstBd} and \eqref{Rlarge} with $\check{\phi}(r)\ll e^{-\frac r2}$.  
Since
\begin{align}
	\begin{split}
		S(R)\defeq \sqrt{ \tilde{m} | \tilde{n}|}\sum_{r\in [1,R]} \left|\frac{\overline{\rho_j(m)}\rho_j(n)}{\ch \pi r_j} \right|
		\ll  A(m,n)R^3|\tilde{m} \tilde{n}|^{\ep}
	\end{split}
\end{align}
by Cauchy-Schwarz, we have
\begin{align}\label{GeneralTraceRlarge1}
	\begin{split}
		\sqrt{ \tilde{m} | \tilde{n}|}\sum_{r\in [1,\frac a{8x})} \left|\frac{\overline{\rho_j(m)}\rho_j(n)}{\ch \pi r_j} \check{\phi}(r_j)\right|
		&\ll\sqrt{ \tilde{m} | \tilde{n}|}\sum_{r\in [1,\frac a{8x})} \left|\frac{\overline{\rho_j(m)}\rho_j(n)}{\ch \pi r_j} \right|e^{-\frac {r_j}2}\\
		&\ll e^{-\frac r2}S(r)\Big|_{r=1}^{\frac a{8x}}+\int_1^{\frac{a}{8x}}S(r) e^{-\frac r2}dr\\
		&\ll A(m,n) |  \tilde{m}  \tilde{n}x|^\ep \(1+\int_1^{\frac{a}{8x}} e^{-\frac r2}r^3dr\)\\
		&\ll A(m,n) |  \tilde{m}  \tilde{n}x|^\ep.
	\end{split}
\end{align}

For $r\in[\frac{a}{8x},\frac{a}{x})$, we apply Proposition~\ref{AdsDukeEsstBd} on $\tilde{m}$, Proposition~\ref{AhgDunEsstBd} on $\tilde{n}$ and \eqref{Rlarge} with $\check{\phi}(r)\ll \frac 1r\ll \frac xa$ to get
\begin{align}\label{GeneralRLarge2Step1}
	\begin{split}
		\sqrt{ \tilde{m} | \tilde{n}|}&\sum_{\frac a{8x}\leq r< \frac ax} \left|\frac{\overline{\rho_j(m)}\rho_j(n)}{\ch \pi r_j} \check{\phi}(r_j)\right|\\
		&\ll \(\frac ax+\tilde{m}^{\frac14}\)\(\frac ax\)^{\frac12}\(|\tilde{n}|^{\frac{131}{294}}+u_n\)^{\frac12}|\tilde{m}\tilde{n}x|^\ep\\
		&\ll \(A(m,n)\(\frac ax\)^{\frac32}+\tilde{m}^{\frac14}\(|\tilde{n}|^{\frac{131}{294}}+u_n\)^{\frac12}  \(\frac ax\)^{\frac 12}\)|\tilde{m}\tilde{n}x|^\ep.	 
	\end{split}
\end{align}
Exchanging the propositions applied on $\tilde{m}$ and $\tilde{n}$ gives a symmetric estimate. These two estimates conclude
\begin{align}\label{GeneralRLarge2Step2}
	\begin{split}
		&\ \ \sqrt{ \tilde{m} | \tilde{n}|}\sum_{\frac a{8x}\leq r< \frac ax} \left|\frac{\overline{\rho_j(m)}\rho_j(n)}{\ch \pi r_j} \check{\phi}(r_j)\right|\\
		&\ll \(\frac ax\)^{\frac12}\left\{A(m,n)\frac ax+\min\(\tilde{m}^{\frac14}\(|\tilde{n}|^{\frac{131}{294}}+u_n\)^{\frac12},|\tilde{n}|^{\frac14}\(\tilde{m}^{\frac{131}{294}}+u_m\)^{\frac12}\)\right\}|\tilde{m}\tilde{n}x|^\ep\\
		&\ll \(A(m,n)\(\frac ax\)^{\frac32}+|\tilde{m}\tilde{n}|^{\frac18}A(m,n)^{\frac12}\(\frac ax\)^{\frac12}\)|\tilde{m}\tilde{n}x|^\ep, 
	\end{split}
\end{align}
where in the last inequality we applied $\min(B,C)\leq \sqrt{BC}$ and the definition of $A(m,n)$ at the beginning of this subsection.

Let 
\[P(m,n)\defeq 2|\tilde{m}\tilde{n}|^{\frac18}A(m,n)^{-\frac12}\geq 1.\] 
Divide $r\geq \max(\frac ax,1)$ into two parts: 
 $\max\(\frac ax,1\)\leq r< P(m,n)$
  and 
  $r \geq  \max\(\frac ax,1,P(m,n)\)$. 
We apply Proposition~\ref{AhgDunEsstBd} on the first range and \eqref{Rlarge} with $\check{\phi}(r)\ll r^{-\frac 32}$ to get
\begin{align}\label{TraceRlargeGeneralFirstPartEstmt}
	\begin{split}
	\sqrt{ \tilde{m}| \tilde{n}|} \sum_{\max(\frac ax,1)\leq r_j<P(m,n)} &\left| \frac{\overline{\rho_j(m)} \rho_j(n)}{\ch \pi r_j}\check{\phi}(r_j)\right|\\
	&\ll\sqrt{ \tilde{m}| \tilde{n}|} \sum_{\max(\frac ax,1)\leq r_j<P(m,n)} \left| \frac{\overline{\rho_j(m)} \rho_j(n)}{\ch \pi r_j}\right|r^{-\frac 32}\\
	&\ll r^{-\frac 32}S(r)\Big|_{r=\max(\frac ax,1)}^{P(m,n)} +\int_{\max(\frac ax,1)}^{P(m,n)}r^{-\frac  52}S(r)dr\\
	&\ll|  \tilde{m}  \tilde{n}|^{\frac3{16}}A(m,n)^{\frac14}| \tilde{m}  \tilde{n}x|^\ep
	\end{split}
\end{align}
by partial summation. We divide the second range into dyadic intervals $C\leq r_j< 2C$ and apply Proposition~\ref{AdsDukeEsstBd} and \eqref{Rlarge} with $\check{\phi}(r)\ll \min(r^{-\frac 32},r^{-\frac 52}\frac xT)$ to get
\begin{align}\label{TraceRlargeGeneralSecondEstmtDyadic}
	\begin{split}
		\sqrt{ \tilde{m}| \tilde{n}|}& \sum_{C\leq r_j< 2C} \left| \frac{\overline{\rho_j(m)} \rho_j(n)}{\ch \pi r_j}\check{\phi}(r_j)\right|\\
		&\ll \min\(C^{-\frac32},C^{-\frac 52}\frac xT\)\(C^2+ (\tilde{m}^{\frac14} +|\tilde{n}|^{\frac14})C+ | \tilde{m}  \tilde{n}|^{\frac14}\)|  \tilde{m}  \tilde{n}x|^\ep\\
		&\ll \(\min\(C^{\frac12},C^{-\frac 12}\frac xT\)+ (\tilde{m}^{\frac14} +|\tilde{n}|^{\frac14})C^{-\frac12}+ | \tilde{m}  \tilde{n}|^{\frac14}C^{-\frac 32}\)|  \tilde{m}  \tilde{n}x|^\ep.
	\end{split}
\end{align}
Next we sum over dyadic intervals. For the first term $\min(C^{\frac12},C^{-\frac 12}\frac xT)$, when
\[\min\(C^{\frac12},C^{-\frac 12}\frac xT\)=C^{\frac12}:\quad \sum_{\substack{j\geq 1:\ 2^jC=\frac xT\\ C\geq P(m,n)}} C^{\frac12}\leq \sum_{j=1}^\infty 2^{-\frac j2}\(\frac xT\)^{\frac12}\ll \(\frac xT\)^{\frac12}, \]
and when
\[\min\(C^{\frac12},C^{-\frac 12}\frac xT\)=C^{-\frac12}\frac xT:\quad \sum_{j\geq 0:\ C=2^{j}\frac xT} C^{-\frac12}\frac xT\leq \sum_{j=0}^\infty 2^{-\frac j2}\(\frac xT\)^{\frac12}\ll \(\frac xT\)^{\frac12} . \]
So after summing up from \eqref{TraceRlargeGeneralSecondEstmtDyadic} and recalling $T\asymp x^{1-\delta}$ in Setting \ref{conditionATDelta}, we have
\begin{align}\label{TraceRlargeGeneralSecondEstmt}
	\begin{split}
		\sqrt{ \tilde{m}| \tilde{n}|}& \sum_{r_j\geq \max(\frac ax,1, P(m,n))} \left| \frac{\overline{\rho_j(m)} \rho_j(n)}{\ch \pi r_j}\check{\phi}(r_j)\right|\\
		&\ll \(\(\frac xT\)^{\frac12}+ (\tilde{m} +|\tilde{n}|)^{\frac14}| \tilde{m}  \tilde{n}|^{-\frac1{16}}A(m,n)^{\frac14}+ | \tilde{m}  \tilde{n}|^{\frac1{16}}A(m,n)^{\frac 34}\)|  \tilde{m}  \tilde{n}x|^\ep\\
		&\ll \( x^{\frac\delta2} + |  \tilde{m}  \tilde{n}|^{\frac3{16}}A(m,n)^{\frac14}\) |  \tilde{m}  \tilde{n}x|^\ep,
	\end{split}
\end{align}
where the last inequality is by $|  \tilde{m}  \tilde{n}|^{\frac14}\gg A(m,n)$. Combining \eqref{TraceRlargeGeneralFirstPartEstmt} and \eqref{TraceRlargeGeneralSecondEstmt} we have
\begin{equation}\label{TraceRlarge3General}
	\sqrt{ \tilde{m}| \tilde{n}|} \sum_{r\geq \max(\frac ax,1)} \left| \frac{\overline{\rho_j(m)} \rho_j(n)}{\ch \pi r_j}\check{\phi}(r_j)\right|\\
	\ll \( x^{\frac\delta2} + A_u(m,n)\) |  \tilde{m}  \tilde{n}x|^\ep. 
\end{equation}

\begin{proof}[Proof of Proposition~\ref{PrereqPropGEN}]
	Clearly $A_u(m,n)\geq A(m,n)$. Combining \eqref{mainDiffrenceGEN}, \eqref{TraceSmoothingGEN}, \eqref{GeneralTraceRisSmall}, \eqref{GeneralTraceRlarge1}, \eqref{GeneralRLarge2Step2}, and \eqref{TraceRlarge3General} we get 
	\begin{align}
		\begin{split}
			&\sum_{\substack{x<c\leq 2x\\N|c}}\frac{S(m,n,c,\nu)}{c} -\sum_{r_j\in i (0,\frac14]}(2^{2s_j-1}-1)\tau_j(m,n)\frac{x^{2s_j-1}}{2s_j-1} \\
			&\quad\ll \(x^{\frac12-\delta}+A_u(m,n)+A(m,n)\(\frac ax\)^{\frac32}+|\tilde{m}\tilde{n}|^{\frac18}A(m,n)^{\frac12}\(\frac ax\)^{\frac12}+x^{\frac\delta 2}\)|\tilde{m}\tilde{n}x|^\ep. 
		\end{split}
	\end{align}
	Since $2x\geq A_u(m,n)^2$ by assumption, we have
	\[\frac ax\ll |\tilde{m}\tilde{n}|^{\frac18}A(m,n)^{-\frac12},\]
	which implies both
	\begin{equation}\label{WhenXLargeAu}
		A(m,n)\(\frac ax\)^{\frac32}\ll A_u(m,n)\quad \text{and}\quad  |\tilde{m}\tilde{n}|^{\frac18}A(m,n)^{\frac12}\(\frac ax\)^{\frac12}\ll A_u(m,n).
	\end{equation} 
Taking $\delta =\frac 13$ we get the desired bound. 
	
\end{proof}

\begin{proof}[Proof of Theorem~\ref{mainThm}]
	Taking $u_m=u_n=1$ we get the theorem from Theorem~\ref{mainThm3}. In fact, when $u_m$ and $u_n$ are $O_{\nu}(1)$, we can still get the desired theorem. 
\end{proof}

\begin{proof}[Proof of Theorem~\ref{mainThm2}]
Assuming $H_\theta$ \eqref{HTheta}, for the range $r\in i(0,\theta]$, we apply Cauchy-Schwarz, Proposition~\ref{AhgDunEsstBd}, and \eqref{RisIm} to get
\begin{equation}\label{GeneralTraceRisIm}
	\sqrt{ \tilde{m} | \tilde{n}|}\sum_{r\in i(0,\theta]} \left|\frac{\overline{\rho_j(m)}\rho_j(n)}{\ch \pi r_j} \check{\phi}(r_j)\right| 
	\ll A(m,n)|  \tilde{m}  \tilde{n}|^{-\frac\theta2} x^{\theta}|  \tilde{m}  \tilde{n}x|^\ep. 
\end{equation}
	Taking this into account in the proof of Theorem~\ref{mainThm3} (especially modifying \eqref{mainDiffrenceGEN}) and by Proposition~\ref{specParaBoundWeightHalf} we get the desired bound. 
\end{proof}

\subsection{Proof of Theorem~\ref{mainThmLastSec}}

By  Theorem~\ref{mainThm2}, we claim that
	\begin{align}\label{mainThm2BoundEditedAlphaNuPositive}
		s(m,n,X)\defeq \sum_{N|c\leq X}\!\!\! \frac{S(m,n,c,\nu)}{c}\ll_{\nu,\ep}\(|\tilde{m}  \tilde{n}|^{\frac{131}{588}-\frac\theta2}X^{\theta}+  |  \tilde{m}  \tilde{n} |^{\frac{143}{588}} +X^{\frac16}\) |  \tilde{m}  \tilde{n}X|^\ep.  
	\end{align}
It suffices to show $\tau_j(m,n)=0$ for $r_j=\frac i4$. Recall \eqref{CuspFormR0} for the Fourier expansions of Maass forms with eigenvalue $\lambda_0=\frac3{16}$. For weight $k=\frac 12$, we have $\tilde n<0\Rightarrow n\leq 0\Rightarrow \rho_j(n)=0$ for $r_j=\frac i4$. Similarly, for weight $k=-\frac 12$ we have $\rho_j(m)=0$ for $r_j=\frac i4$. Thus we ensure the bound above.

To prove Theorem~\ref{mainThmLastSec}, we need the following bounds on Bessel functions. By \cite[(10.30.1)]{dlmf}, for fixed $\beta, \ell>0$ and $0\leq z\leq \ell$, 
	\[I_{\beta}(z)\ll_{\beta,\ell}z^{\beta}. \]
	Recall our notation $a=4\pi\sqrt{|\tilde m\tilde{n}|}$ and let $L\defeq h\sqrt{|\tilde m\tilde{n}|}\asymp_{h}a $. Then $I_{\beta}(a/L)\ll_{h} 1$. When $t\geq L$, 
	by \cite[Lemma~7.3]{ahlgrendunn} and \cite[(10.3), Lemma~10.1]{AAimrn}, 
	\[\frac{d}{dt}I_{\frac 12}\(\frac at\)\ll_{h } a^{\frac52} t^{-\frac72}+a^{\frac12}t^{-\frac32},\quad \frac{d}{dt}I_{\frac 32}\(\frac at\)\ll_{h} a^{\frac32} t^{-\frac52}. \]
With \eqref{mainThm2BoundEditedAlphaNuPositive}, for $\beta=\frac12$ or $\frac 32$ we have
		\begin{align*}
			\sum_{N|c>L}\frac{S(m,n,c,\nu)}cI_{\beta}\(\frac ac\)
			&=-s(m,n,L)I_{\beta}\(\frac aL\)-\int_L^\infty s(m,n,t)\frac d{dt}I_{\beta}\(\frac at\)dt\\
			&\ll_{h ,\ep} |\tilde m\tilde{n}|^{\frac{143}{588}+\ep}
		\end{align*}

	A similar process for the $J$-Bessel functions follows from 
	\cite[(10.14.4)]{dlmf} $|J_{\beta}(x)|\leq_\beta x^\beta$, \cite[(10.6.1)]{dlmf}  
	\begin{equation}
		\frac{d}{dt}J_{\frac 32}\(\frac at\)=-\frac a{2t^2}\(J_{\frac12 }\(\frac at\)-J_{\frac52 }\(\frac at\)\)\ll a^{\frac 32}t^{-\frac52}+a^{\frac72}t^{-\frac 92}, 
	\end{equation}
	and
\begin{equation}\label{DerivativeJHalf}
		\frac{d}{dt}J_{\frac 12}\(\frac at\)=-\frac a{2t^2}\(J_{-\frac12 }\(\frac at\)-J_{\frac32 }\(\frac at\)\)\ll a^{\frac12}t^{-\frac32}+a^{\frac 52}t^{-\frac72}.
\end{equation}

{

}

\section{Proof of Theorem~\ref{FourierExpn_ofPoincareSeries}}

The only thing left is to prove the Fourier expansion of Theorem~\ref{FourierExpn_ofPoincareSeries}. Recall the notations in Section 4 for $\tilde m<0$ and let $M_{\alpha,\beta}$ and $W_{\alpha,\beta}$ denote the $M$- and $W$-Whittaker functions, respectively. For $y>0$, by \cite[(13.18.4)]{dlmf},
\begin{align}\label{SpecialWhittakerM}
	\begin{split}
		\mathcal M_{1-\frac k2}(-y)&=y^{-\frac k2}M_{-\frac k2,\,\frac12-\frac k2}(y)\\
		&=(1-k)(\Gamma(1-k)-\Gamma(1-k,y))e^{\frac{y}2},
	\end{split}
\end{align} 
and by \cite[(13.18.2)]{dlmf},
\begin{equation}\label{SpecialWhittakerW}
	W_{-\frac k2,\,\frac12-\frac k2}(y)= y^{\frac k2}e^{\frac y2}\Gamma(1-k,y),\quad 
	W_{\frac k2,\,\frac12-\frac k2}(y)= y^{\frac k2}e^{-\frac y2}. 
\end{equation}
The contribution to $P_k(1-\frac k2,m,N;z)$ from $c=0$ in \eqref{MaassPoincareSeries} equals
\[\frac1{\Gamma(2-k)}\varphi_{1-\frac k2,\,k}(\tilde mz)=\frac{1-k}{\Gamma(2-k)}(\Gamma(1-k)-\Gamma(1-k,4\pi|\tilde m|y))e^{2\pi \tilde mz}. \]
The contribution to $P_k(1-\frac k2,m,N;z)$ from some $c>0$ equals
\begin{align}\label{ContributionSingleC}
	\begin{split}
	\frac1{\Gamma(2-k)}&\sum_{\ell \in \Z}\sum_{\substack{d(c)^*\\0<a<c,\;ad\equiv 1(c)}}\overline{\nu}
		\begin{psmallmatrix}
		a&*\\c&d+\ell c
	\end{psmallmatrix}
(cz+d+\ell c)^{-k} \\
&\cdot\mathcal{M}_{1-\frac k2}\(\frac{4\pi \tilde{m}y}{|cz+d+\ell c|^2}\)e\(\frac{\tilde ma}c- \re\(\frac{\tilde m }{c(cz+d+\ell c)}\) \)\\
=\ &\frac1{\Gamma(2-k)}c^{-k}\sum_{\substack{d(c)^*}}\overline{\nu}\begin{psmallmatrix}
	a&b\\c&d
\end{psmallmatrix}e\(\frac{\tilde ma}c\) \sum_{\ell\in \Z}  e(\ell \alpha_\nu) \(z+\frac dc+\ell\)^{-k} \\
&\cdot\mathcal{M}_{1-\frac k2}\(\frac{4\pi \tilde{m}y}{c^2|z+\frac dc+\ell|^2}\)e\(-\frac{\tilde m } {c^2} \re\(\frac1{z+\frac dc+\ell}\) \),
\end{split}
\end{align}
where we used \eqref{MultiplierSystemBasicProprety}: $\nu\begin{psmallmatrix}
a&b+\ell a\\c&d+\ell c
\end{psmallmatrix}=\nu\begin{psmallmatrix}
a&b\\c&d
\end{psmallmatrix}\nu\begin{psmallmatrix}
1&\ell\\0&1
\end{psmallmatrix}=\nu\begin{psmallmatrix}
a&b\\c&d
\end{psmallmatrix}e(-\ell\alpha_{\nu})$ for all $\ell\in \Z$. 
Let
\[f(z)\defeq \sum_{\ell\in \Z} \frac{e(\ell \alpha_\nu)}{(z+\ell)^{k} } \;\mathcal{M}_{1-\frac k2}\(\frac{4\pi \tilde{m}y}{c^2|z+\ell|^2}\)e\(-\frac{\tilde m } {c^2} \re\(\frac1{z+\ell}\) \).\]
Then $f(z)e(\alpha_{\nu} x)$ has period $1$ and $f$ has Fourier expansion
\begin{equation}\label{fzdc}
f(z)=\sum_{n\in \Z}a_y(n)e(\tilde n x),\quad f\(z+\frac dc\)=\sum_{n\in \Z} a_y(n)e\(\frac {\tilde n d}c\)e(\tilde n x), 
\end{equation}
where by \eqref{SpecialWhittakerM}
\begin{align*}
a_y(n)&=\int_\R z^{-k} \mathcal{M}_{1-\frac k2}\(\frac{4\pi \tilde{m}y}{c^2|z|^2}\)e\(-\frac{\tilde m x} {c^2|z|^2} -nx+\alpha_{\nu}x\)dx\\
&=\frac{c^k}{|4\pi \tilde m y|^{\frac k2}}\int_\R \(\frac{x-iy}{x+iy}\)^{\frac k2} M_{-\frac k2,\frac12-\frac k2}\(\frac{4\pi |\tilde{m}|y}{c^2|z|^2}\)e\(-\frac{\tilde m x} {c^2|z|^2} -\tilde nx\)dx. 
\end{align*}
By substituting $x=yu$, 
\begin{align*}
a_y(n)=\frac{i^{-k}yc^{k}}{|4\pi \tilde m y|^{\frac k2}}\int_\R \(\frac{1+iu}{1-iu}\)^{\frac k2} M_{-\frac k2,\frac12-\frac k2}\(\frac{4\pi |\tilde{m}|}{c^2y(u^2+1)}\)e\(\frac{|\tilde m| u} {c^2y(u^2+1)} -\tilde nyu\)du. 
\end{align*}
This integral is evaluated by \cite[p.32-33]{BruinierBookBorcherds}. We get

\begin{align*}
	a_y(n)=\frac{i^{-k}c^k\Gamma(2-k)}{|4\pi \tilde m y|^{\frac k2}c}
	\cdot\left\{ \begin{array}{ll}
		\dfrac{2\pi \sqrt{|\tilde m/\tilde n|}}{\Gamma(1-k)}\, W_{-\frac k2,1-\frac k2}(4\pi |\tilde n| y) J_{1-k}\(\dfrac{4\pi\sqrt{|\tilde m\tilde n|}}c\),& \tilde n<0;\\
		\dfrac{4\pi^{2-\frac k2} |\tilde m|^{1-\frac k2} c^{k-1} y^{\frac k2}}{(1-k)\Gamma(1-k)},& \tilde n=0;\\
		{2\pi \sqrt{|\tilde m/\tilde n|}}\, W_{\frac k2,1-\frac k2}(4\pi \tilde n y) I_{1-k}\(\dfrac{4\pi\sqrt{|\tilde m\tilde n|}}c\),& \tilde n>0.\\
	\end{array}
	\right. 
\end{align*}
Applying \eqref{SpecialWhittakerW}, substituting \eqref{fzdc} in \eqref{ContributionSingleC}, interchanging the finite sum on $d$ and sum on $n$, and summing over $N|c>0$ we get
\begin{align*}
	&P_k(1-\tfrac k2,m,N;z)=\frac{1-k}{\Gamma(2-k)}(\Gamma(1-k)-\Gamma(1-k,4\pi|\tilde m|y))e^{2\pi \tilde mz}
	+\sum_{n\in \Z} e^{2\pi i \tilde nz }\\
	&\ \cdot \left\{
	\begin{array}{ll}
		{\displaystyle
		 \frac{i^{-k} 2\pi\Gamma(1-k,4\pi|\tilde n|y)}{\Gamma(1-k)} \Big|\frac{\tilde m}{\tilde n}\Big|^{\frac{1-k}2}\sum_{N|c>0}\dfrac{S(m,n,c,\nu)}c J_{1-k}\(\frac{4\pi\sqrt{|\tilde m\tilde n|}}c\),}& \tilde n<0;\\
		 {\displaystyle
		 	\frac{i^{-k} (2\pi)^{2-k}|\tilde m|^{1-k}}{\Gamma(2-k)} \sum_{N|c>0} \frac{S(m,0,c,\nu)}{c^{2-k}}  }& \tilde n=0;\\
		 {\displaystyle
		 {i^{-k} 2\pi} \Big|\dfrac{\tilde m}{\tilde n}\Big|^{\frac{1-k}2}\sum_{N|c>0}\dfrac{S(m,n,c,\nu)}c I_{1-k}\(\dfrac{4\pi\sqrt{|\tilde m\tilde n|}}c\),}& \tilde n>0.
	\end{array}
\right.
\end{align*}
Since we assumed $\alpha_\nu>0$, we do not have the term for $\tilde n=0$. 

It remains to prove the convergence of Fourier coefficients when $k=\frac12$ and $\alpha_{\nu}>0$. For $\tilde n>0$, the convergence follows from Theorem~\ref{mainThmLastSec}. For $\tilde n<0$ we have the same sign case $\tilde m\tilde n>0$. We cite the generalized result by Goldfeld and Sarnak \cite{gs} where $s_j=\frac12+\im r_j$: 
\[s(x)\defeq\sum_{N|c\leq x}\frac{S(m,n,c,\nu)}c=\sum_{r_j\in i(0,\frac14]}\tau_j(m,n)\frac{x^{2s_j-1}}{2s_j-1}+O_{\nu,m,n,\ep}\(x^{\frac13+\ep}\). \]
The exponent $\frac13$ is from the trivial bound $|S(m,n,c,\nu)|\ll_{m,n} c$. 

We first show $\alpha_{\nu}>0$ implies $\tau_j(m,n)=0$ for $r_j=\frac i4$. Recall \eqref{CuspFormR0} for the Fourier expansion of $v_j(\cdot)\in \LEigenform_{\frac12}(N,\nu,\frac i4)$. When $\tilde m,\tilde n<0$, we have $\rho_j(m)=\rho_j(n)=0$ and $\tau_j(m,n)=0$ for $r_j=\frac i4$. Then the exceptional spectral parameter $r_j=\frac i4$ contributes $0$ to $s(x)$. The contribution from the other exceptional eigenvalues are in $r_j\in i(0,\frac 7{128}]$ with exponent ${2s_j-1=2\im r_j<\frac13}$ by $H_\frac 7{64}$ \eqref{HTheta} and Proposition~\ref{specParaBoundWeightHalf}. Finally we get
\[s(x)\ll_{N,\nu,m,n,\ep} x^{\frac13+\ep}. \]
Now we check for the convergence. Recall our notation $a= 4\pi \sqrt{\tilde m\tilde n}$, by \eqref{DerivativeJHalf}, 
\begin{align*}
	\sum_{N|c,\ x<c\leq y}\frac{S(m,n,c,\nu)}c J_{\frac12}\(\frac ac\)&=s(t)J_{\frac12}\(\frac at\)\Big|_{t=x}^{t=y}+O\(\int_x^y s(t)\(\frac{a^{\frac12}}{t^{\frac32}}+\frac{a^{\frac 52}}{t^{\frac72}}\)dt\)\\
	&\ll a^{\frac12}x^{-\frac16+\ep}+a^{\frac12}y^{-\frac16+\ep} + a^{\frac52}x^{-\frac{13}6+\ep}+a^{\frac52}y^{-\frac{13}6+\ep}.  
\end{align*} 
By Cauchy's convergence test we are done. For the $I$-Bessel function we have a similar bound. 

\section*{Acknowledgement}
The author extends sincere thanks to the referee for their careful review and valuable comments. Special gratitude is also owed to Professor Scott Ahlgren for his plenty of helpful discussions and suggestions. The author also thanks Nick Andersen and Alexander Dunn for their enlightening comments. 

	\bibliographystyle{alpha}
	\bibliography{allrefs}

\end{document}